\newcommand{\Levy}{L\'{e}vy}
\newcommand{\deq}{\stackrel{d}{=}}
\newcommand{\R}{\mathbb{R}}
\newcommand{\F}{\mathscr{F}}
\newcommand{\Q}{\mathbb{Q}}
\newcommand{\e}{\varepsilon}
\newcommand{\cadlag}{c\`{a}dl\`{a}g}
\renewcommand{\P}{\mathbb{P}}
\numberwithin{equation}{section}
\renewcommand\section{\@startsection {section}{1}{\z@}%
{-3.5ex \@plus -1ex \@minus -.2ex}%
{2.3ex \@plus.2ex}%
{\normalfont\large\bf}}
\renewcommand\subsection{\@startsection {subsection}{1}{\z@}%
{-3.5ex \@plus -1ex \@minus -.2ex}%
{2.3ex \@plus.2ex}%
{\normalfont\normalsize\bf}}
\theoremstyle{plain}
\newtheorem{thm}{Theorem}[section]
\newtheorem{lem}[thm]{Lemma}
\newtheorem{cor}[thm]{Corollary}
\newtheorem{prop}[thm]{Proposition}
\theoremstyle{definition}
\newtheorem{Rem}[thm]{Remark}
\begin{document}
\begin{center}
\Large \textbf{Multi-point local time penalizations with various clocks for one-dimensional \Levy\ processes}
\end{center}
\begin{center}
Kohki IBA\footnote{Graduate School of Science, The University of Osaka, Japan. E-mail: kohki.iba [at] gmail.com}
\end{center}
\begin{abstract}
We study the penalization problem with various clocks where the weight is given as the exponential functional of multi-point local times for one-dimensional \Levy\ processes. The limit processes may vary according to the choice of random clock, and are singular to the original process.
\end{abstract}


\section{Introduction}
A penalization problem is to study the long-time limit of the form
\begin{align}
\label{penal}
  \lim_{\tau\to \infty}\frac{\P_x[F_s\cdot \Gamma_\tau]}{\P_x[\Gamma_\tau]},
\end{align}
for all bounded $\F_s$-adapted functional $F_s$, where $((X_t)_{t\ge 0},(\F_t)_{t\ge 0},(\P_x)_{x\in \R})$ is a Markov process, $\P_x[\cdot]$ denotes the expectation with respect to the measure $\P_x$, $(\Gamma_t)_{t\ge 0}$ is a non-negative process called a \emph{weight}, and $\tau$ is a net of parametrized random times tending to infinity called a \emph{clock}. Here, as the random clock $\tau$, we adopt one of the following:
\begin{itemize}
  \item (C) \emph{Constant clock}: $\tau=t$ as $t\to \infty.$
  \item (Ex) \emph{Exponential clock}: $\tau=(\bm{e}_q)$ as $q\to 0+$, where $\bm{e}_q$ has the exponential distribution with parameter $q>0$ and is independent of the process $(X_t)_{t\ge 0}.$
  \item (OH) \emph{One-point hitting time clock}: $\tau=(T_b)$ as $b\to \pm \infty$, where $T_b$ is the first hitting time at point $b\in \R$.
  \item (TH) \emph{Two-point hitting time clock}: $\tau=(T_c\wedge T_{-d})$ as $c,d\to \infty$ and $\frac{d-c}{c+d}\to \gamma \in [-1,1]$, which denote $(c,d)\stackrel{(\gamma)}{\to}\infty.$
  \item (IL) \emph{Inverse local time clock}: $\tau=(\eta_u^b)$ as $b\to \pm \infty$, where $(\eta_u^b)_{u\ge 0}$ is the inverse local time at $b\in \R.$
\end{itemize}

To solve this problem, we want to find a function $\rho(\tau)$ of the clock $\tau$ and a $(\F_s)$-martingale $(M_s^\Gamma)_{s\ge 0}$ such that
\begin{align}
  \lim_{\tau\to \infty}\rho(\tau)\P_{x}\left[F_s\cdot \Gamma_\tau\right]=\P_x[F_s\cdot M_s^\Gamma]
\end{align}
hold. If $M_0^\Gamma>0$ under $\P_x$, this convergence implies
\begin{align}
  \lim_{\tau\to \infty}\frac{\P_x[F_s\cdot \Gamma_\tau]}{\P_x[\Gamma_\tau]}=\P_x\left[F_s\cdot \frac{M_s^\Gamma}{M_0^\Gamma}\right],
\end{align}
which solves the penalization problem (\ref{penal}) (see, Sections \ref{Sec3}, \ref{Sec4}, \ref{Sec5},\ and \ref{Sec6} for the details).

\subsection{Backgrounds of the penalization problems}
This problem has been studied for various one-dimensional processes. Here, we introduce previous researches which the weight is expressed as a function of the local time.
\begin{center}
\begin{tabular}{|c|c|c|c|} \hline
 Process $X$ & Paper & Weight $\Gamma$ & Clock $\tau$ \\ \hline\hline

  \multirow{2}{*}{Brownian motion} &  Roynette et al. \cite{RVY1}
& $L_t^0$ & \multirow{2}{*}{(C)}\\ \cline{2-3}
  & Roynette et al. \cite{RVY2}& $e^{-\int L_t^x q(dx)}$ & \\ \hline \hline
 
 Bessel process &  Roynette et al. \cite{RVY-B} & $f(L_t^0),\ f(L_t^0)e^{\lambda X_t}$ & (C)\\ \hline \hline
 
 Random walk & Debs \cite{D} & $f(L_n^0)$ & (C) \\ \hline \hline
 
\multirow{5}{*}{Diffusion} &  Salminen-Vallois \cite{SV}
& $f(L_t^0)$ & \multirow{2}{*}{(C)}\\ \cline{2-3}
  & Profeta \cite{P2} & $e^{\pm \lambda L_t^0}$ & \\ \cline{2-4}
  
  &Profeta et al. \cite{PYY} & $f(L_t^0)$ & \begin{tabular}{c}
   (Ex)\\
   (OH)\\
   (IL)
 \end{tabular} \\ \hline \hline
 
Stable process & Yano et al. \cite{YYY} & $f(L_t^0),\ e^{-\int L_t^x q(dx)}$ & (C) \\ \hline \hline
 
\multirow{2}{*}{\Levy \ process} & Takeda-Yano \cite{TY}
& $f(L_t^0)$ & \multirow{2}{*}{
  \begin{tabular}{c}
    (Ex),(OH)\\
    (TH),(IL)
  \end{tabular}}\\ \cline{2-3}
  & Iba-Yano \cite{IY-3} & $e^{-\lambda L_t^a-\lambda' L_t^b}$
  & \\  \hline
\end{tabular}
\end{center}
Here, $f$ is a given function satisfying appropriate conditions, $(L_t^a)_{t\ge 0}$ represents the local time of point $a\in \R$, $\lambda$ and $\lambda'$ are positive constants, and $q(dx)$ is a Radon measure satisfying appropriate conditions. In particular, the case of the weight function $f(L_t^0)$ is referred to as the \emph{local time penalization}, and the case of the weight function $e^{-\int L_t^x q(dx)}$ is referred to as the \emph{Kac killing penalization}. 

In the local time penalization problem with $f(x)=e^{-\lambda x}$, the weight function decreases each time the process hits $0$, as the local time increases upon each visit to $0$. This can be interpreted as the process being penalized each time it hits $0$. The limiting measure obtained in the penalization problem is called the \emph{penalized measure}, and it is singular to the original measure $\P_x$ (see, Section \ref{Sec8} for the details).

A special case of the penalization problem with weight $\Gamma_s=1_{\{L_s^0=0\}}=1_{\{s<T_0\}}$ is called the problem of \emph{conditioning to avoid zero}. The Brownian conditioning problem using random clocks date back to the \emph{taboo\ process} of Knight \cite{Kn}. This problem has been generalized to one-dimensional \Levy\ processes and various sets (see, e.g., \cite{Iba} for the details).

\subsection{Main results}
We consider a one-dimensional \Levy\ process $(X_t)_{t\ge 0}$ which is recurrent. For the characteristic exponent $\Psi$ of $X$, we always assume the condition
\begin{align*}
  \int_0^\infty \left|\frac{1}{q+\Psi(\lambda)}\right|d\lambda <\infty\qquad \text{for}\ q>0.
  \tag{\bf{A}}
\end{align*}
Let $(L_t^a)_{t\ge 0}$ denote the local time process of $a\in \R$ for $(X_t)_{t\ge 0}$ (subject to a suitable normalization). The weight process we consider is given as
\begin{align}
  \Gamma_t^{(n)}:=\exp\left(-\sum_{k=1}^n \lambda_{a_k}L_t^{a_k}\right)
\end{align}
for $n\ge 2$, distinct points $a_1,...,a_n\in \R$, and positive constants $\lambda_{a_1},...,\lambda_{a_n}>0.$ This weight process can be considered to be the Kac killing penalization with
\begin{align}
  q(dx)=\lambda_{a_1}\delta_{a_1}(dx)+\cdots +\lambda_{a_n}\delta_{a_n}(dx).
\end{align}
The case $\lambda_{a_1}=\cdots =\lambda_{a_n}=\infty$ is formally considered a conditioning to avoid $n$-points. This case are discussed in Iba \cite{Iba}.

Our main theorem is as follows:\\

\begin{thm}
\label{Thm1}
The following penalization limits hold:
\begin{align}
  \text{(Ex)}&\qquad \lim_{q\to 0+}\frac{\P_x[F_s\cdot \Gamma_{\bm{e}_q}^{(n)}]}{\P_x[\Gamma_{\bm{e}_q}^{(n)}]}=\P_x\left[F_s\cdot \frac{\varphi_{A_n}^{\lambda_{a_1},...,\lambda_{a_k}}(X_s)\Gamma_s^{(n)}}{\varphi_{A_n}^{\lambda_{a_1},...,\lambda_{a_k}}(x)}\right],\\
  \text{(OH)}&\qquad  \lim_{b\to \pm \infty}\frac{\P_x[F_s\cdot \Gamma_{T_b}^{(n)}]}{\P_x[\Gamma_{T_b}^{(n)}]}=\P_x\left[F_s\cdot \frac{\varphi_{A_n}^{(\pm 1),\lambda_{a_1},...,\lambda_{a_n}}(X_s)\Gamma_s^{(n)}}{\varphi_{A_n}^{(\pm 1),\lambda_{a_1},...,\lambda_{a_n}}(x)}\right],\\
  \text{(TH)}&\qquad \lim_{(c,d)\stackrel{(\gamma)}{\to}\infty}\frac{\P_x[F_s\cdot \Gamma_{T_{c}\wedge T_{-d}}^{(n)}]}{\P_x[\Gamma_{T_{c}\wedge T_{-d}}^{(n)}]}=\P_x\left[F_s\cdot \frac{\varphi_{A_n}^{(\gamma),\lambda_{a_1},...,\lambda_{a_n}}(X_s)\Gamma_s^{(n)}}{\varphi_{A_n}^{(\gamma),\lambda_{a_1},...,\lambda_{a_n}}(x)}\right],\\
  \text{(IL)}&\qquad \lim_{b\to \pm \infty}\frac{\P_x[F_s\cdot \Gamma_{\eta_u^b}^{(n)}]}{\P_x[\Gamma_{\eta_u^b}^{(n)}]}=\P_x\left[F_s\cdot \frac{\varphi_{A_n}^{(\pm 1),\lambda_{a_1},...,\lambda_{a_n}}(X_s)\Gamma_s^{(n)}}{\varphi_{A_n}^{(\pm 1),\lambda_{a_1},...,\lambda_{a_n}}(x)}\right].
\end{align}
For $-1\le \gamma\le 1$, the functions $\varphi_{A_n}^{(\gamma),\lambda_{a_1},...,\lambda_{a_n}}(x)$ are defined by
\begin{align}
\label{func-1}
  \varphi_{A_n}^{(\gamma),\lambda_{a_1},...,\lambda_{a_n}}(x):=\varphi_{A_n}^{(\gamma)}(x)+\left\langle\bm{p}_n(x),(\mathbb{E}_n-\mathbb{J}_n)^{-1}\bm{i}_n^{(\gamma)}\right\rangle,
\end{align}
where $\langle\cdot,\cdot\rangle$ denotes the standard inner product on $\R^n$, 
\begin{align}
  \varphi_{A_n}^{(\gamma)}(x):=h^{(\gamma)}(x-a_n)-\sum_{k=1}^{n-1}h^{(\gamma)}(a_k-a_n)\P_x(T_{a_k}=T_{A_n}),
\end{align}
which is defined by (1.10) of Iba \cite{Iba},
\begin{align}
  \bm{p}_n(x)&:=\begin{pmatrix}
    \P_x(T_{a_1}=T_{A_n})\\
    \P_x(T_{a_2}=T_{A_n})\\
    \vdots\\
    \P_x(T_{a_n}=T_{A_n})
  \end{pmatrix},\ \bm{i}_n^{(\gamma)}:=\begin{pmatrix}
    I_1^{(\gamma)}\\
    I_2^{(\gamma)}\\
    \vdots\\
    I_n^{(\gamma)}
  \end{pmatrix},\\
  I_{k}^{(\gamma)}&:=\frac{1}{\lambda_{a_k}}-\sum_{\substack{i;\ i\le n\\ i\neq k}}J_{k,i}\left(\frac{1}{\lambda_{a_k}}+h^{(\gamma)}(a_i-a_k)\right),\\
  \mathbb{J}_n&:=(J_{k,i})_{k,i=1}^n,\  J_{k,i}:=\begin{cases}
  \P_{a_k}\left[e^{-\lambda_{a_k}L_{T_{a_i}}^{a_k}},\ T_{a_i}=T_{A_n\setminus \{a_k\}}\right]&\text{for}\ k\neq i,\\
  0&\text{for}\ k=i,
  \end{cases}
\end{align}
and $\mathbb{E}_n$ is the $n\times n$ identity matrix. The functions $h$ and $h^{(\gamma)}$ are defined in (\ref{h}) or (\ref{hgam}), respectively.
\end{thm}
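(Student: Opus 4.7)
The plan is to reduce each of the four limits in Theorem \ref{Thm1} to a single pointwise asymptotic for the scalar function $V_\tau(y):=\P_y[\Gamma_\tau^{(n)}]$ as $\tau\to\infty$, via the Markov property at time $s$. Namely,
\begin{align*}
  \P_x[F_s\cdot \Gamma_\tau^{(n)}]=\P_x\!\left[F_s\cdot \Gamma_s^{(n)}\cdot V_{\tau\circ\theta_s}(X_s)\right],
\end{align*}
where $\tau\circ\theta_s$ denotes the appropriately shifted clock: for (Ex) the memorylessness of $\bm{e}_q$ gives $\tau\circ\theta_s\deq \bm{e}_q$, and for (OH), (TH), (IL) the shifted hitting or inverse-local-time clock differs from $\tau$ only by a quantity that is harmless in the limit. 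Thus, if we can exhibit a normalizer $\rho(\tau)$ and a limit function $\varphi$ such that $\rho(\tau)V_\tau(y)\to \varphi(y)$ pointwise with a dominant-bounded argument on the $\P_x$-side, then dividing by $\rho(\tau)\P_x[\Gamma_\tau^{(n)}]=\rho(\tau)V_\tau(x)$ and passing to the limit yields exactly the statement, with $\varphi=\varphi_{A_n}^{(\gamma),\lambda_{a_1},\ldots,\lambda_{a_n}}$.

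To compute $V_\tau(y)$ I would exploit the excursion structure relative to the set $A_n=\{a_1,\ldots,a_n\}$. Since $\Gamma_{T_{A_n}}^{(n)}=1$, splitting at the first visit of $A_n$ gives
\begin{align*}
  V_\tau(y)=\P_y[\tau<T_{A_n}]+\sum_{k=1}^n \P_y[T_{a_k}=T_{A_n},\, T_{A_n}\le \tau]\, W_{\tau,k},
\end{align*}
where $W_{\tau,k}=\P_{a_k}[\Gamma_{\tau}^{(n)}]$ in the shifted sense. A second application of the strong Markov property, this time at the first visit of $A_n\setminus\{a_k\}$ (between which the only local time that accrues is $L^{a_k}$), yields a linear system $\bm{W}_\tau=\mathbb{J}_n\bm{W}_\tau+\bm{r}_\tau$, where $\mathbb{J}_n=(J_{k,i})$ is precisely the matrix in the statement and $\bm{r}_\tau$ is a residual vector accounting for excursions that outlast the clock. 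Inverting, $\bm{W}_\tau=(\mathbb{E}_n-\mathbb{J}_n)^{-1}\bm{r}_\tau$, which is the origin of the term $(\mathbb{E}_n-\mathbb{J}_n)^{-1}\bm{i}_n^{(\gamma)}$ in (\ref{func-1}); invertibility of $\mathbb{E}_n-\mathbb{J}_n$ follows from the strictly substochastic nature of $\mathbb{J}_n$ (recurrence plus the $\lambda_{a_k}>0$ weights make its spectral radius strictly less than $1$).

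For each of the four clocks, the asymptotics of $\P_y[\tau<T_{A_n}]$ and of $\bm{r}_\tau$ are one-point inputs which I would import from the papers on which this one builds. The avoidance-of-$A_n$ result of Iba \cite{Iba} supplies $\rho(\tau)\P_y[\tau<T_{A_n}]\to \varphi_{A_n}^{(\gamma)}(y)$ with the correct $\gamma$ for each clock ($\gamma=0$ replaced by the $(\text{Ex})$ normalization, $\gamma=\pm 1$ for (OH) and (IL), and general $\gamma\in[-1,1]$ for (TH)), while the one-point analysis of Takeda--Yano \cite{TY} gives $\rho(\tau)W_{\tau,k}\to$ a linear combination of $1/\lambda_{a_k}$ and $h^{(\gamma)}(a_i-a_k)$ that, after a direct algebraic simplification using the identity $\sum_i J_{k,i}=\P_{a_k}[e^{-\lambda_{a_k}L_{T_{A_n\setminus\{a_k\}}}^{a_k}}]$, collapses into the component $I_k^{(\gamma)}$ of $\bm{i}_n^{(\gamma)}$. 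Substituting into the above decomposition reproduces formula (\ref{func-1}).

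The main obstacle is the verification that the residual vector $\bm{r}_\tau$ admits the claimed factorised limit. Concretely, one must show that for an excursion starting at $a_k$ and straddling the clock — either because its duration exceeds $\bm{e}_q$, or because it reaches the far point(s) $b$, $(c,-d)$, or exhausts the inverse local time level at $b$ — the corresponding expectation asymptotically factorises into the Laplace transform $J_{k,i}$ of the local time $L^{a_k}$ accumulated before the excursion starts, and the one-point asymptotic $\rho(\tau)^{-1}(1/\lambda_{a_k}+h^{(\gamma)}(a_i-a_k))$ associated with the clock. Once this factorisation is secured, the required martingale property of $\varphi_{A_n}^{(\gamma),\lambda_{a_1},\ldots,\lambda_{a_n}}(X_s)\Gamma_s^{(n)}$ and its strict positivity at $s=0$ follow from the consistency of the convergence $\rho(\tau)\P_y[\Gamma_\tau^{(n)}]\to \varphi(y)$ combined with Fatou and Scheff\'e, delivering all four penalization limits in a single stroke.
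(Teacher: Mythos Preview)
Your overall plan matches the paper's: decompose $\P_y[\Gamma_\tau^{(n)}]$ at $T_{A_n}$, set up a linear system for the values $\P_{a_k}[\Gamma_\tau^{(n)}]$ by splitting at $T_{A_n\setminus\{a_k\}}$, pass to the limit, and deduce the penalization via an $L^1$-martingale argument. One inaccuracy to flag: the linear system you obtain is \emph{not} $\bm{W}_\tau=\mathbb{J}_n\bm{W}_\tau+\bm{r}_\tau$ with the clock-free matrix $\mathbb{J}_n$. The strong Markov property at $T_{a_i}$ brings in a clock-dependent factor, giving coefficients $J_{k,i}^\tau$ (an extra $e^{-qT_{a_i}}$ for (Ex), an extra $\wedge\,T_b$ in the event for (OH), etc.), so the system is $\bm{W}_\tau=\mathbb{J}_n^\tau\bm{W}_\tau+\bm{r}_\tau$ and one must separately check $\mathbb{J}_n^\tau\to\mathbb{J}_n$ before invoking continuity of the inverse---immediate by bounded convergence, but not automatic as written. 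One methodological difference worth noting: for (OH), (TH), (IL) the paper computes the residual limit exactly as you sketch, writing $I_k^\tau=\P_{a_k}[e^{-\lambda_{a_k}L_\tau^{a_k}}]-\sum_{i\neq k}J_{k,i}^\tau\,\P_{a_i}[e^{-\lambda_{a_k}L_\tau^{a_k}}]$ and importing the one-point asymptotic from \cite{IY-3}; but for (Ex) the paper instead performs a direct Poisson-point-process calculation of $r_q(0)I_k^q$ (Lemma~\ref{exp-lem}) that lands on the excursion-measure expression~(\ref{Ieq}), and only in Section~\ref{S9} verifies that this agrees with the general formula for $I_k^{(0)}$. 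Your uniform route also works for (Ex) and is shorter, since the needed input $r_q(0)\P_a[e^{-\lambda L_{\bm{e}_q}^c}]\to \lambda^{-1}+h(a-c)$ is precisely the one-point exponential-clock case already handled in \cite{TY}; the paper's detour buys the alternative formula~(\ref{Ieq}) as a by-product.
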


We will state Theorem \ref{Thm1} in a more general form with a proof as Theorems \ref{penal-1}, \ref{penal-2}, \ref{penal-3},\ and \ref{penal-4}.\\

\begin{Rem}
When $\gamma=0$, there is another expression of $I_k^{(0)}$:
\begin{align}
\label{Ieq}
  I_k^{(0)}=\frac{1}{\bm{n}^{a_k}(T_{A_n\setminus \{a_k\}}<\infty)+\lambda_{a_k}}\left(1-\sum_{\substack{i;\ i\le n\\ i\neq k}}h(a_i-a_k)\bm{n}^{a_k}(T_{a_i}=T_{A_n\setminus \{a_k\}}<\infty)\right),
\end{align}
where $\bm{n}^{a_k}$ is the excursion measure away from $a_k$ with a suitable normalization. We will prove this identity in Section \ref{S9}.
\end{Rem}

\subsection{Organization}
This paper is organized as follows. In Section \ref{Sec2}, we prepare some general results of one-dimensional \Levy\ processes. In Sections \ref{Sec3}, \ref{Sec4}, \ref{Sec5},\ and \ref{Sec6}, we discuss the penalization results with the exponential clock, the hitting time clock, the two-point hitting time clock, and the inverse local time clock, respectively. The structure of Sections \ref{Sec3}, \ref{Sec4}, \ref{Sec5},\ and \ref{Sec6} are similar: we first compute the expectation $\P_x[\Gamma_\tau]$, then we study the limit of $\rho(\tau)\P_x[\Gamma_\tau]$ as $\tau\to \infty$, and finally state and prove the main penalization results. In section \ref{Sec7}, we consider the spacial case of $n=2$. In Section \ref{Sec8}, we study the $n$-point penalized measure and its properties. In Section \ref{S9}, we prove the equation (\ref{Ieq})


\section{Preliminaries}
\label{Sec2}
\subsection{\Levy\ process and resolvent density}
Let $((X_t)_{t\ge 0},(\P_x)_{x\in \R})$ be the canonical representation of a one-dimensional \Levy\ process with starting from $x\in \R$, $\P_x$-a.s. For $t\ge 0$, we denote by $\F_t^X:=\sigma(X_s,\ 0\le s\le t)$ the natural filtration of $(X_t)_{t\ge 0}$, and write $\F_t:=\bigcap_{\e>0}\F_{t+\e}^X$. For a set $A\subset \R$, let $T_A$ be the first hitting time of $A$ for $(X_t)_{t\ge 0}$, i.e.,
\begin{align}
  T_{A}:=\inf \{t\ge 0;\ X_t\in A\}.
\end{align}
For simplicity, we denote $T_{\{a\}}$ as $T_a$ for $a\in \R.$ For $\lambda\in \R$, we denote by $\Psi(\lambda)$ the characteristic exponent of $(X_t)_{t\ge 0}$, i.e., $\Psi(\lambda)$ satisfies
\begin{align}
  \P_0\left[e^{i\lambda X_t}\right]=e^{-t\Psi(\lambda)}\qquad \text{for}\ t\ge 0.
\end{align}

Throughout this paper, we always assume $((X_t)_{t\ge 0},\P_0)$ is recurrent, and always assume the condition
\begin{align*}
\label{A}
  \int_0^\infty \left|\frac{1}{q+\Psi(\lambda)}\right|d\lambda<\infty\qquad \text{for}\ q>0. \tag{\bf{A}}
\end{align*}
Under this condition, it is known that $(X_t)_{t\ge 0}$ has a bounded continuous resolvent density $r_q(x)$. It is satisfies
\begin{align}
  \int_{\R}f(x)r_q(x)dx=\P_x\left[\int_0^\infty e^{-qt}f(X_t)dt\right]\qquad \text{for}\ q>0\ \text{and}\ f\ge 0
\end{align}
(see, e.g., Theorems II.16 and II.19 of \cite{Ber}). Moreover, it is known that the formula between the first hitting time of $a\in \R$ and the resolvent density $r_q(x)$:
\begin{align}
\label{A-1}
  \P_x\left[e^{-qT_a}\right]=\frac{r_q(a-x)}{r_q(0)}\qquad \text{for}\ q>0\ \text{and}\ x\in \R
\end{align}
(see, e.g., Corollary II.18 of \cite{Ber}).

\subsection{Local time and excursion}
For $a\in \R$, we denote by $\mathscr{D}^a$ the set of \cadlag\ paths $e:[0,\infty)\to \R\cup \{\Delta\}$ such that
\begin{align}
  \begin{cases}
    e(t)\in \R\setminus \{a\}\qquad &\text{for}\ 0<t<\zeta(e),\\
    e(t)=\Delta\qquad &\text{for}\ t\ge \zeta(e),
  \end{cases}
\end{align}
where the point $\Delta\notin \R$ is a cemetery state and $\zeta$ is the excursion length, i.e.,
\begin{align}
  \zeta=\zeta(e):=\inf \{t>0;\ e(t)=\Delta\}.
\end{align}
Let $\Sigma^a$ denote the $\sigma$-algebra on $\mathscr{D}^a$ generated by cylinder sets. 

Thanks to condition (\ref{A}), we can define a local time of $a\in \R$, which we denote by $(L_t^a)_{t\ge 0}$. It is known that $(L_t^a)_{t\ge 0}$ is continuous in $t$ and satisfies
\begin{align}
  \P_x\left[\int_0^\infty e^{-qt}dL_t^a\right]=r_q(a-x)\qquad \text{for}\ q>0\ \text{and}\ x\in \R
\end{align}
(see, e.g., Section V of \cite{Ber}). 

Let $(\eta_l^a)_{l\ge 0}$ be an inverse local time, i.e.,
\begin{align}
  \eta_l^a:=\inf \{t>0;\ L_t^a>l\}.
\end{align}
It is known that the process $((\eta_l^a)_{l\ge 0},\P_a)$ is a subordinator which has the Laplace exponent
\begin{align}
  \P_a\left[e^{-q\eta_l^a}\right]=e^{-\frac{l}{r_q(0)}}\qquad \text{for}\ l\ge 0\ \text{and}\ q>0
\end{align}
(see, e.g., Proposition V.4 of \cite{Ber}).

We denote $(\epsilon_l^a(t))_{t\ge 0}$ for an excursion away from $a\in \R$ which starts at local time $l\ge 0$, i.e.,
\begin{align}
  \epsilon_l^a(t):=\begin{cases}
    X_{t+\eta_{l-}^a}\qquad &\text{for}\ 0\le t<\eta_l^a-\eta_{l-}^a,\\
    \Delta &\text{for}\ t\ge \eta_l^a-\eta_{l-}^a.
  \end{cases}
\end{align}
Then, $(\epsilon_l^a)_{l\ge 0}$ is a Poisson point process, and we write $\bm{n}^a$ for the characteristic measure of $(\epsilon_l^a)_{l\ge 0}$. It is known that $(\mathscr{D}^a,\Sigma^a,\bm{n}^a)$ is a $\sigma$-finite measure space (see, e.g., Section IV of \cite{Ber}), and there is the formula 
\begin{align}
\label{A-2}
  \bm{n}^a\left[1-e^{-qT_a}\right]=\frac{1}{r_q(0)}
\end{align}
(see, e.g., Equation (2.4) of \cite{TY}).

We define
\begin{align}
  \bm{N}^a(B):=\#\{(l,e)\in B;\ \epsilon_l^a=e\}\qquad \text{for}\ B\in \mathscr{B}(0,\infty)\otimes \Sigma^a.
\end{align}
Then, $\bm{N}^a$ is a Poisson random measure with its intensity measure $ds\otimes \bm{n}^a(de)$.

The excursion measure $\bm{n}^a$ has the following form of the Markov property: it holds that for any stopping time $T<\infty$, any non-negative $\F_t$-measurable functional $Z_t$, and any non-negative measurable functional $F$ on $\mathscr{D}^a$,
\begin{align}
\label{nMP}
  \bm{n}^a\left[Z_t\cdot F(X\circ \theta_T)\right]=\int \bm{n}^a\left[Z_t,\ X_T\in dx\right]\P_x^a[F(X)],
\end{align}
where $\theta$ is the shift operator and $\P_x^a$ is the distribution of the killed process upon $T_a$ (see, e.g., Theorem III.3.28 of \cite{Blu}).

\subsection{The renormalized zero resolvent}
For any $x\in \R$, the limit
\begin{align}
\label{h}
  h(x):=\lim_{q\to 0+}\left(r_q(0)-r_q(-x)\right)\qquad \text{for}\ q>0
\end{align}
exists and finite (see Theorem 1.1 of Takeda-Yano  \cite{TY}). We call the limiting function $h(x)$ the \emph{renormalized zero resolvent}. It is known that $h(x)$ is non-negative, continuous, and sub-additive function (see Theorem 1.1 of Takeda-Yano \cite{TY}). Further, we define
\begin{align}
\label{hgam}
  h^{(\gamma)}(x)&:=h(x)+\frac{\gamma}{\P_0[X_1^2]}x\qquad \text{for}\ -1\le \gamma\le 1,\\
  h^B(x)&:=h(x)+h(-x),\\
  h^C(x,y)&:=\frac{1}{h^B(x-y)}\left\{\begin{aligned}
&(h(y)+h(-x))h(x-y)+(h(x)+h(-y))h(y-x)\\
&\quad +(h(x)-h(y))(h(-y)-h(-x))-h(x-y)h(y-x)
\end{aligned}\right\}.
\end{align}
Note that when $\P_0[X_1^2]=\infty$, $h^{(\gamma)}$ is independent of $\gamma.$ Several formulas are known between these functions and hitting times: for $a,b\in \R$,
\begin{align}
 \label{P1}
  h^B(a)&=\P_0[L_{T_a}^0]=\frac{1}{\bm{n}^0(T_a<\infty)},\\
  h^C(a,b)&=\P_{0}[L_{T_a\wedge T_b}^0]=\frac{1}{\bm{n}^{0}(T_a\wedge T_b<\infty)},\\
  \label{P2}
  \P_x(T_a<T_b)&=\frac{h(b-a)+h(x-b)-h(x-a)}{h^B(a-b)},
\end{align}
(see Lemmas 3.5, 3.8, 6.1, and 6.3 of Takeda-Yano \cite{TY}).


\section{Exponential clock}
\label{Sec3}
We write $A_n:=\{a_1,...,a_n\}$ for distinct points $a_1,...,a_n\in \R$.

\subsection{Expectation}
Let us calculate the expectation $\P_x\left[\Gamma_{\bm{e}_q}^{(n)}\right]$. Since the local time $L_t^{a_k}$ is zero until the time $T_{a_k}$, we have
\begin{align*}
\label{exp1}
  \P_x\left[\Gamma_{\bm{e}_q}^{(n)}\right]&=\P_x\left[\int_0^\infty \Gamma_s^{(n)}qe^{-qs}ds\right]\\
  &=\P_x\left[\int_0^{T_{A_n}}qe^{-qs}ds\right]+\sum_{k=1}^n \P_x\left[\int_{T_{a_k}}^\infty \Gamma_s^{(n)}qe^{-qs}ds,\ T_{a_k}=T_{A_n}\right]\\
  &=1-\P_x\left[e^{-qT_{A_n}}\right]+\sum_{k=1}^n \P_x\left[\int_0^\infty \Gamma_{s+T_{a_k}}^{(n)}qe^{-q(s+T_{a_k})}ds,\ T_{a_k}=T_{A_n}\right]\\
  &=1-\P_x\left[e^{-qT_{A_n}}\right]+\sum_{k=1}^n \P_x\left[e^{-qT_{a_k}},\ T_{a_k}=T_{A_n}\right]\P_{a_k}\left[\Gamma_{\bm{e}_q}^{(n)}\right].
  \stepcounter{equation}\tag{\theequation}
\end{align*}
We now calculate the expectation $A_k^q:=\P_{a_k}\left[\Gamma_{\bm{e}_q}^{(n)}\right]$. By dividing into cases depending on the next point the process hits after $a_k$, we have
\begin{align*}
\label{exp2}
  A_k^q&=\P_{a_k}\left[\Gamma_{\bm{e}_q}^{(n)}\right]\\
  &=\P_{a_k}\left[\int_0^\infty \Gamma_s^{(n)}qe^{-qs}ds\right]\\
  &=\P_{a_k}\left[\int_0^{T_{A_n\setminus \{a_k\}}}\Gamma_s^{(n)}qe^{-qs}ds\right]+\sum_{\substack{i;\ i\le n\\ i\neq k}}\P_{a_k}\left[\int_{T_{a_i}}^\infty \Gamma_s^{(n)}qe^{-qs}ds,\ T_{a_i}=T_{A_n\setminus \{a_k\}}\right]\\
  &=\P_{a_k}\left[\int_0^{T_{A_n\setminus \{a_k\}}}e^{-\lambda_{a_k}L_s^{a_k}}qe^{-qs}ds\right]+\sum_{\substack{i;\ i\le n\\ i\neq k}}\P_{a_k}\left[\int_{0}^\infty \Gamma_{s+T_{a_i}}^{(n)}qe^{-q(s+T_{a_i})}ds,\ T_{a_i}=T_{A_n\setminus \{a_k\}}\right]\\
   &=\P_{a_k}\left[\int_0^{T_{A_n\setminus \{a_k\}}}e^{-\lambda_{a_k}L_s^{a_k}}qe^{-qs}ds\right]+\sum_{\substack{i;\ i\le n\\ i\neq k}}\P_{a_k}\left[e^{-qT_{a_i}}e^{-\lambda_{a_k}L_{T_{a_i}}^{a_k}},\ T_{a_i}=T_{A_n\setminus \{a_k\}}\right]\P_{a_i}\left[\Gamma_{\bm{e}_q}^{(n)}\right]\\
  &=:I_k^q+\sum_{\substack{i;\ i\le n\\ i\neq k}}J_{k,i}^qA_i^q.
  \stepcounter{equation}\tag{\theequation}
\end{align*}
Thus, we obtain the following simultaneous equations:
\begin{align}
  \begin{pmatrix}
    A_1^q\\
    A_2^q\\
    \vdots\\
    A_n^q
  \end{pmatrix}=
  \begin{pmatrix}
    0 & J_{1,2}^q & \cdots & J_{1,n-1}^q&J_{1,n}^q\\
    J_{2,1}^q & 0  & \cdots &J_{2,n-1}^q&J_{2,n}^q\\
    \vdots &\vdots &&\vdots & \vdots \\
    J_{n,1}^q & J_{n,2}^q &\cdots & J_{n,n-1}^q & 0
  \end{pmatrix}
  \begin{pmatrix}
    A_1^q\\
    A_2^q\\
    \vdots\\
    A_n^q
  \end{pmatrix}
  +
  \begin{pmatrix}
    I_1^q\\
    I_2^q\\
    \vdots\\
    I_n^q
  \end{pmatrix}.
\end{align}
We rewrite this as
\begin{align}
\label{simeq}
  \bm{a}_n^q=\mathbb{J}_n^q\bm{a}_n^q+\bm{i}_n^q.
\end{align}
To solve these simultaneous equations, we prove the following lemma.\\

\begin{lem}
\label{coeff}
  The matrix $\mathbb{E}_n-\mathbb{J}_n^q$ is invertible, where $\mathbb{E}_n$ denote the $n\times n$ identity matrix.
\end{lem}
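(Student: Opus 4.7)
The plan is to establish that $\|\mathbb{J}_n^q\|_\infty < 1$ in the maximum row-sum norm on $\R^{n\times n}$; invertibility of $\mathbb{E}_n - \mathbb{J}_n^q$ then follows by a standard contraction argument (equivalently, by strict diagonal dominance, or by convergence of the Neumann series $\sum_{m\ge 0}(\mathbb{J}_n^q)^m$).

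First I would rewrite the row sum of $\mathbb{J}_n^q$ as a single expectation. Each entry $J_{k,i}^q \ge 0$, and the events $E_i := \{T_{a_i} = T_{A_n\setminus\{a_k\}}\}$ for $i \ne k$ are $\P_{a_k}$-a.s.\ disjoint with union $\{T_{A_n\setminus\{a_k\}} < \infty\}$: by the c\`adl\`ag property one has $X_{T_{A_n\setminus\{a_k\}}} = a_j$ for a unique $j \in \{1,\dots,n\}\setminus\{k\}$ on $\{T_{A_n\setminus\{a_k\}} < \infty\}$, and this $j$ is precisely the index picking out $E_j$. Since $L^{a_k}_{T_{a_i}} = L^{a_k}_{T_{A_n\setminus\{a_k\}}}$ on $E_i$, summing gives
\begin{align*}
    \sum_{i\ne k} J_{k,i}^q = \P_{a_k}\!\left[e^{-qT_{A_n\setminus\{a_k\}}}\,e^{-\lambda_{a_k}L^{a_k}_{T_{A_n\setminus\{a_k\}}}};\, T_{A_n\setminus\{a_k\}}<\infty\right].
\end{align*}

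Next I would upgrade this to a strict bound. Under $\P_{a_k}$ the process starts at $a_k$, so by right-continuity $X_t \to a_k$ as $t \to 0+$; since $a_k \notin A_n \setminus \{a_k\}$, $\P_{a_k}$-a.s.\ there is a random $\delta > 0$ with $X_t \notin A_n \setminus \{a_k\}$ on $[0,\delta]$, and hence $T_{A_n\setminus\{a_k\}} > 0$ $\P_{a_k}$-a.s. Combined with $q > 0$, this gives $e^{-qT_{A_n\setminus\{a_k\}}} < 1$ $\P_{a_k}$-a.s., so the expectation above is strictly less than $1$ for every $k$. Therefore $\|\mathbb{J}_n^q\|_\infty := \max_k \sum_i |J_{k,i}^q| < 1$.

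To finish, if $v \in \R^n$ satisfies $(\mathbb{E}_n - \mathbb{J}_n^q)v = 0$, then $v = \mathbb{J}_n^q v$ and hence $\|v\|_\infty \le \|\mathbb{J}_n^q\|_\infty \|v\|_\infty$, which forces $v = 0$. Thus $\mathbb{E}_n - \mathbb{J}_n^q$ has trivial kernel and is therefore invertible. I expect the only mildly delicate point to be the strict positivity $T_{A_n\setminus\{a_k\}} > 0$ $\P_{a_k}$-a.s.; however this follows immediately from the c\`adl\`ag property together with the distinctness of the points $a_1,\dots,a_n$, so no substantial obstacle should arise.
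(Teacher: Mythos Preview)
Your proof is correct and follows essentially the same route as the paper: both establish strict diagonal dominance of $\mathbb{E}_n-\mathbb{J}_n^q$ by showing $\sum_{i\ne k}J_{k,i}^q<1$ for every $k$. The only cosmetic difference is that the paper obtains strictness by dropping the factors $e^{-qT_{a_i}}e^{-\lambda_{a_k}L^{a_k}_{T_{a_i}}}$ and using $\sum_{i\ne k}\P_{a_k}(T_{a_i}=T_{A_n\setminus\{a_k\}})=1$ (recurrence), whereas you use $T_{A_n\setminus\{a_k\}}>0$ $\P_{a_k}$-a.s.\ together with $q>0$; both justifications are valid and the argument is otherwise identical.
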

\begin{proof}
  By \Levy-Desplanques theorem (see, e.g., Corollary 5.6.17 of \cite{HJ}), it suffices to show that $\mathbb{E}_n-\mathbb{J}_n^q$ is strictly diagonally dominant, that is
  \begin{align}
    1>\sum_{\substack{i;\ i\le n\\ i\neq k}}J_{k,i}^q\qquad \text{for}\ k=1,...,n.
  \end{align}
This inequality is obvious because 
\begin{align*}
  \sum_{\substack{i;\ i\le n\\ i\neq k}} J_{k,i}^q&=\sum_{\substack{i;\ i\le n\\ i\neq k}} \P_{a_k}\left[e^{-qT_{a_i}}e^{-\lambda_{a_k}L_{T_{a_i}}^{a_k}},\ T_{a_i}=T_{A_n\setminus \{a_k\}}\right]\\
  &<\sum_{\substack{i;\ i\le n\\ i\neq k}} \P_{a_k}\left(T_{a_i}=T_{A_n\setminus \{a_k\}}\right)\\
  &=1.
  \stepcounter{equation}\tag{\theequation}
\end{align*}
The proof is complete.
\end{proof}

Thanks to this Lemma, we can solve the simultaneous equations (\ref{simeq}). The solution is
\begin{align}
\label{exp3}
  \bm{a}_n^q=(\mathbb{E}_n-\mathbb{J}_n^q)^{-1}\bm{i}_n^q.
\end{align}
Therefore, by (\ref{exp1}), (\ref{exp2}), and (\ref{exp3}), $\P_x\left[\Gamma_{\bm{e}_q}^{(n)}\right]$ has been computed.

\subsection{Convergence}
Let us find the limit of $r_q(0)\P_x\left[\Gamma_{\bm{e}_q}^{(n)}\right]$ as $q\to 0+.$\\

\begin{prop}
\label{exp-conv}
It holds that
\begin{align}
  \lim_{q\to 0+}r_q(0)\P_{x}\left[\Gamma_{\bm{e}_q}^{(n)}\right]&=\varphi_{A_n}^{(0),\lambda_{a_1},...,\lambda_{a_n}}(x),\\
  \lim_{q\to 0+}r_q(0)\P_{X_t}\left[\Gamma_{\bm{e}_q}^{(n)}\right]&=\varphi_{A_n}^{(0),\lambda_{a_1},...,\lambda_{a_n}}(X_t)\qquad \text{in}\ L^1(\P_x).
\end{align}
\end{prop}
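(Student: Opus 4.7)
The plan is to exploit the explicit representation established by (\ref{exp1})--(\ref{exp3}) and pass to the limit $q\to 0+$ term by term. Multiplying (\ref{exp1}) by $r_q(0)$,
\begin{align*}
  r_q(0)\P_x\bigl[\Gamma_{\bm{e}_q}^{(n)}\bigr] = r_q(0)\bigl(1-\P_x[e^{-qT_{A_n}}]\bigr) + \sum_{k=1}^n \P_x\bigl[e^{-qT_{a_k}},T_{a_k}=T_{A_n}\bigr]\bigl(r_q(0)A_k^q\bigr),
\end{align*}
while (\ref{exp3}) gives $r_q(0)\bm{a}_n^q = (\mathbb{E}_n-\mathbb{J}_n^q)^{-1}\bigl(r_q(0)\bm{i}_n^q\bigr)$. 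The convergence $r_q(0)(1-\P_x[e^{-qT_{A_n}}])\to\varphi_{A_n}^{(0)}(x)$ is the $n$-point hitting analogue already available from Iba \cite{Iba}. The scalars $\P_x[e^{-qT_{a_k}},T_{a_k}=T_{A_n}]$ and $J_{k,i}^q$ converge to $\P_x(T_{a_k}=T_{A_n})$ and $J_{k,i}$ respectively by bounded convergence and recurrence, and Lemma \ref{coeff} applies also at $q=0$, so continuity of matrix inversion yields $(\mathbb{E}_n-\mathbb{J}_n^q)^{-1}\to(\mathbb{E}_n-\mathbb{J}_n)^{-1}$. The one substantive computation, which is the main obstacle, is to prove $r_q(0)I_k^q\to I_k^{(0)}$.

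For the latter, my approach is first to apply the product-rule identity
\begin{align*}
  \int_0^T qe^{-qs}e^{-\lambda L_s}\,ds = 1 - e^{-qT-\lambda L_T} - \lambda\int_0^T e^{-qs-\lambda L_s}\,dL_s
\end{align*}
with $L=L^{a_k}$, $\lambda=\lambda_{a_k}$ and $T=T_{A_n\setminus\{a_k\}}$ under $\P_{a_k}$, and then evaluate the two resulting expectations via the excursion Poisson point process at $a_k$. A change of variable to the inverse local time, combined with the PPP formula applied to the first excursion hitting $A_n\setminus\{a_k\}$, yields
\begin{align*}
  \P_{a_k}\Bigl[\int_0^{T_{A_n\setminus\{a_k\}}}e^{-qs-\lambda_{a_k}L_s^{a_k}}\,dL_s^{a_k}\Bigr] = \frac{1}{\kappa_k(q)+\lambda_{a_k}},\qquad \sum_{i\neq k}J_{k,i}^q = \frac{N_k-\tilde M_k(q)}{\kappa_k(q)+\lambda_{a_k}},
\end{align*}
where $\kappa_k(q):=\bm{n}^{a_k}[1-e^{-q\zeta}\mathbf{1}_{\{T_{A_n\setminus\{a_k\}}=\infty\}}]$, $N_k:=\bm{n}^{a_k}(T_{A_n\setminus\{a_k\}}<\infty)$, and $\tilde M_k(q):=\bm{n}^{a_k}[(1-e^{-qT_{A_n\setminus\{a_k\}}})\mathbf{1}_{\{T_{A_n\setminus\{a_k\}}<\infty\}}]$. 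Using (\ref{A-2}) to rewrite $\kappa_k(q)=1/r_q(0)+N_k-M_k(q)$ with $M_k(q):=\bm{n}^{a_k}[(1-e^{-q\zeta})\mathbf{1}_{\{T_{A_n\setminus\{a_k\}}<\infty\}}]$, a short algebraic simplification gives
\begin{align*}
  r_q(0)I_k^q = \frac{1+r_q(0)\bigl(\tilde M_k(q)-M_k(q)\bigr)}{\kappa_k(q)+\lambda_{a_k}}.
\end{align*}
The difference $\tilde M_k(q)-M_k(q)$ is rewritten via the excursion Markov property (\ref{nMP}) at $T_{A_n\setminus\{a_k\}}$ as $-\sum_{i\neq k}\bm{n}^{a_k}[e^{-qT_{a_i}},T_{a_i}=T_{A_n\setminus\{a_k\}}<\infty]\,\P_{a_i}[1-e^{-qT_{a_k}}]$, and (\ref{A-1}) together with the definition (\ref{h}) of $h$ yield $r_q(0)\P_{a_i}[1-e^{-qT_{a_k}}]\to h(a_i-a_k)$. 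Combining and dividing by $\kappa_k(q)+\lambda_{a_k}\to N_k+\lambda_{a_k}$ reproduces exactly the alternative representation (\ref{Ieq}) of $I_k^{(0)}$ noted in the Remark after Theorem \ref{Thm1}.

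For the $L^1(\P_x)$-convergence in the second display, the only $y$-dependent factors in the expression for $r_q(0)\P_y[\Gamma_{\bm{e}_q}^{(n)}]$ are $r_q(0)(1-\P_y[e^{-qT_{A_n}}])$ and the scalars $\P_y[e^{-qT_{a_k}},T_{a_k}=T_{A_n}]\in[0,1]$, while $r_q(0)\bm{a}_n^q$ is independent of $y$ and converges to a finite limit. The hitting-probability factors converge in $L^1(\P_x)$ by bounded convergence, so it remains to show $r_q(0)(1-\P_{X_t}[e^{-qT_{A_n}}])\to\varphi_{A_n}^{(0)}(X_t)$ in $L^1(\P_x)$; this is precisely the $L^1$-statement of the hitting penalization proved in Iba \cite{Iba}, which can also be established directly from the uniform bound $r_q(0)(1-\P_y[e^{-qT_{A_n}}])\le r_q(0)-r_q(a_n-y)$ (by $T_{A_n}\le T_{a_n}$) and standard renormalized zero-resolvent estimates from Takeda-Yano \cite{TY}, together with $\P_x[h^B(X_t-a_n)]<\infty$.
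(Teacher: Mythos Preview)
Your proof is correct and follows the same overall scheme as the paper: decompose via (\ref{exp1})--(\ref{exp3}), invoke Iba \cite{Iba} for the convergence of $r_q(0)(1-\P_x[e^{-qT_{A_n}}])$ (pointwise and in $L^1$), pass the bounded factors $\P_x[e^{-qT_{a_k}},T_{a_k}=T_{A_n}]$ and $J_{k,i}^q$ to the limit by dominated convergence, use strict diagonal dominance to invert $\mathbb{E}_n-\mathbb{J}_n$, and concentrate the real work on showing $r_q(0)I_k^q\to I_k^{(0)}$ via excursion theory.

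The one genuine difference is the organisation of that excursion computation. The paper decomposes the time integral $\int_0^{T_{A_n\setminus\{a_k\}}}\Gamma_s^{(n)}qe^{-qs}\,ds$ directly into excursion intervals and applies the compensation formula separately on $\{l<\sigma_A\}$ and on $\{l=\sigma_A\}$, obtaining two fractions with the common denominator $\bm{n}^{a_k}(A)+\bm{n}^{a_k}[1-e^{-qT_{a_k}},A^c]+\lambda_{a_k}$. You instead integrate by parts first, reducing $I_k^q$ to $1-\sum_{i\neq k}J_{k,i}^q-\lambda_{a_k}\P_{a_k}\bigl[\int_0^{T_{A_n\setminus\{a_k\}}}e^{-qs-\lambda_{a_k}L_s^{a_k}}dL_s^{a_k}\bigr]$, and then evaluate each piece by a single, simpler Poisson point process identity. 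Both routes use exactly the same ingredients (exponentiality of $\sigma_A$, the law $\bm{n}^{a_k}(\cdot\mid A)$ of $\epsilon_{\sigma_A}^{a_k}$, the Laplace exponent of the restricted subordinator, and the excursion Markov property (\ref{nMP})), and both land on the identical closed form $r_q(0)I_k^q=(1+r_q(0)(\tilde M_k(q)-M_k(q)))/(\kappa_k(q)+\lambda_{a_k})$ before invoking (\ref{Ieq}) to identify the limit with $I_k^{(0)}$. Your integration-by-parts set-up is arguably a bit more transparent, since the $dL$-integral becomes an elementary change of variables to inverse local time; the paper's direct decomposition, on the other hand, makes the separate contributions of pre-$\sigma_A$ and at-$\sigma_A$ excursions explicit.
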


Before proving this proposition, we prove the following lemma.\\

\begin{lem}
\label{exp-lem}
It holds that
  \begin{align}
    \lim_{q\to 0}J_{k,i}^q&=J_{k,i},\\
    \label{limI}
  \lim_{q\to 0}r_q(0)I_{k}^q&=I_{k}^{(0)}.
  \end{align}
\end{lem}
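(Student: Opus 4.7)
The first limit, $J_{k,i}^q\to J_{k,i}$, is an immediate application of dominated convergence. Since $X$ is recurrent, $T_{A_n\setminus\{a_k\}}<\infty$ $\P_{a_k}$-a.s., and on $\{T_{a_i}=T_{A_n\setminus\{a_k\}}\}$ we also have $T_{a_i}<\infty$; the integrand $e^{-qT_{a_i}}e^{-\lambda_{a_k}L_{T_{a_i}}^{a_k}}\mathbf{1}_{\{T_{a_i}=T_{A_n\setminus\{a_k\}}\}}$ is bounded by $1$ and converges pointwise as $q\downarrow 0$ to $e^{-\lambda_{a_k}L_{T_{a_i}}^{a_k}}\mathbf{1}_{\{T_{a_i}=T_{A_n\setminus\{a_k\}}\}}$.

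For the second limit my plan is to first derive an explicit closed form for $I_k^q$ via the excursion theory of $X$ at $a_k$. Write $\tau:=T_{A_n\setminus\{a_k\}}$ and $\sigma:=L_\tau^{a_k}$; since $(\epsilon_l^{a_k})$ under $\P_{a_k}$ is a PPP with intensity $\bm{n}^{a_k}$, standard Poisson thinning gives $\sigma\sim\mathrm{Exp}(\mu_K)$ with $\mu_K:=\bm{n}^{a_k}(T_{A_n\setminus\{a_k\}}<\infty)$, and conditionally on $\sigma=s$ the accumulated non-killing time satisfies $\P_{a_k}[e^{-q\eta_{\sigma-}^{a_k}}\mid\sigma=s]=\exp(-s\psi(q))$ with $\psi(q):=\bm{n}^{a_k}[1-e^{-q\zeta};T_{A_n\setminus\{a_k\}}=\infty]$. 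Because $L^{a_k}$ is constant along each excursion, splitting the integral defining $I_k^q$ into contributions from the excursions before local time $\sigma$ and from the partial killing excursion, taking expectations, and using Fubini, will yield
\begin{align*}
I_k^q=\frac{\mu_K+\psi(q)-\sum_{i\neq k}\rho_i(q)}{\lambda_{a_k}+\mu_K+\psi(q)},\qquad \rho_i(q):=\bm{n}^{a_k}\bigl[e^{-qT_{a_i}};T_{a_i}=T_{A_n\setminus\{a_k\}}<\infty\bigr].
\end{align*}
The same computation also yields $J_{k,i}^q=\rho_i(q)/(\lambda_{a_k}+\mu_K+\psi(q))$, whence $J_{k,i}=\rho_i(0)/(\lambda_{a_k}+\mu_K)$.

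The crux of the argument will then be the identity
\begin{align*}
r_q(0)\Bigl(\mu_K+\psi(q)-\sum_{i\neq k}\rho_i(q)\Bigr)=1-\sum_{i\neq k}\rho_i(q)\bigl(r_q(0)-r_q(a_k-a_i)\bigr).
\end{align*}
I will obtain it by applying the Markov property (\ref{nMP}) of $\bm{n}^{a_k}$ at the stopping time $T_{A_n\setminus\{a_k\}}$ inside an excursion: on $\{T_{a_i}=T_{A_n\setminus\{a_k\}}<\infty\}$ the remaining lifetime $\zeta-T_{a_i}$ until return to $a_k$ is distributed as $T_{a_k}$ under $\P_{a_i}$, so (\ref{A-1}) gives
\begin{align*}
\bm{n}^{a_k}\bigl[e^{-q\zeta};T_{a_i}=T_{A_n\setminus\{a_k\}}<\infty\bigr]=\rho_i(q)\,\frac{r_q(a_k-a_i)}{r_q(0)};
\end{align*}
summing over $i$ and combining with (\ref{A-2}) in the form $r_q(0)\bm{n}^{a_k}[1-e^{-q\zeta}]=1$ rearranges into the displayed identity.

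Letting $q\downarrow 0$ then concludes the proof: dominated convergence against the finite measure $\bm{n}^{a_k}[\,\cdot\,;T_{A_n\setminus\{a_k\}}<\infty]$ gives $\rho_i(q)\to\rho_i(0)=\bm{n}^{a_k}(T_{a_i}=T_{A_n\setminus\{a_k\}}<\infty)$, the definition (\ref{h}) gives $r_q(0)-r_q(a_k-a_i)\to h(a_i-a_k)$, and $\psi(q)\downarrow 0$, so
\begin{align*}
r_q(0)I_k^q\longrightarrow\frac{1-\sum_{i\neq k}\rho_i(0)h(a_i-a_k)}{\lambda_{a_k}+\mu_K}.
\end{align*}
Substituting $J_{k,i}=\rho_i(0)/(\lambda_{a_k}+\mu_K)$ and $\sum_{i\neq k}J_{k,i}=\mu_K/(\lambda_{a_k}+\mu_K)$ shows this equals $\frac{1}{\lambda_{a_k}}-\sum_{i\neq k}J_{k,i}\bigl(\frac{1}{\lambda_{a_k}}+h(a_i-a_k)\bigr)=I_k^{(0)}$. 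The main technical hurdle is the bookkeeping needed for the closed form, where one must track killing vs.\ non-killing excursions simultaneously with the Laplace weight $e^{-qs}$ and the local-time discount $e^{-\lambda_{a_k}L_s^{a_k}}$; once that is in place, the identity above and the passage to the limit are essentially algebraic.
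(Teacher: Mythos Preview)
Your proposal is correct and follows essentially the same route as the paper: both decompose $I_k^q$ via excursion theory at $a_k$, obtain the closed form with denominator $\lambda_{a_k}+\mu_K+\psi(q)$, and then use the Markov property of $\bm{n}^{a_k}$ together with (\ref{A-1}) and (\ref{A-2}) to rewrite the numerator as $\frac{1}{r_q(0)}-\sum_{i\neq k}\rho_i(q)\frac{r_q(0)-r_q(a_k-a_i)}{r_q(0)}$ before passing to the limit. The only organizational difference is that the paper spells out the excursion bookkeeping in full (splitting into $l<\sigma_A$ and $l=\sigma_A$) and defers the identification of the two forms of $I_k^{(0)}$ to Section~\ref{S9}, whereas you state the closed form directly and fold in the identity $J_{k,i}=\rho_i(0)/(\lambda_{a_k}+\mu_K)$ (which is exactly Lemma~(\ref{I}) of Section~\ref{S9}) to finish in one step.
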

\begin{proof}
 By the bounded convergence theorem, we have
  \begin{align*}
  \lim_{q\to 0}J_{k,i}^q&=\lim_{q\to 0}\P_{a_k}\left[e^{-qT_{a_i}}e^{-\lambda_{a_k}L_{T_{a_i}}^{a_k}},\ T_{a_i}=T_{A_n\setminus \{a_k\}}\right]\\
  &=\P_{a_k}\left[e^{-\lambda_{a_k}L_{T_{a_i}}^{a_k}},\ T_{a_i}=T_{A_n\setminus \{a_k\}}\right]\\
  &=J_{k,i}.
  \stepcounter{equation}\tag{\theequation}
\end{align*}

Next, we show the limit (\ref{limI}). To simplify notation, we write $A:=\{T_{A_n\setminus \{a_k\}}<\infty\}$, where $T_A$ is a first hitting time to $A$ with respect to excursion paths. By dividing the range of integration into excursion intervals, we have
\begin{align*}
\label{I-1}
  I_k^q&=\P_{a_k}\left[\int_0^{T_{A_n\setminus \{a_k\}}}\Gamma_s^{(n)}qe^{-qs}ds\right]\\
  &=\P_{a_k}\left[\sum_{l\le \sigma_A}\int_{\eta_{l-}^{a_k}}^{\eta_l^{a_k}\wedge T_{A_n\setminus \{a_k\}}}e^{-\lambda_{a_k}l}qe^{-qs}ds\right]\\
  &=\P_{a_k}\left[\int_{l\le \sigma_A}\int_{\mathscr{D}^{a_k}}\left(\int_{0}^{\eta_l^{a_k}\wedge T_{A_n\setminus \{a_k\}}-\eta_{l-}^{a_k}}e^{-\lambda_{a_k}l}qe^{-q(s+\eta_{l-}^{a_k})}ds\right)\bm{N}^{a_k}(dl\otimes de)\right]\\
  &=:\P_{a_k}\left[\int_{l\le \sigma_A}\int_{\mathscr{D}^{a_k}}F(\eta_{l-}^{a_k},X,e)\bm{N}^{a_k}(dl\otimes de)\right]\\
  &=\P_{a_k}\left[\left(\int_{l< \sigma_A}+\int_{l=\sigma_A}\right)\int_{\mathscr{D}^{a_k}}F(\eta_{l-}^{a_k},X,e)\bm{N}^{a_k}(dl\otimes de)\right],
  \stepcounter{equation}\tag{\theequation}
\end{align*}
where $\sigma_A$ is the first hitting time for the Poisson point process $(\epsilon_t^{a_k})_{t\ge 0}$:
\begin{align}
  \sigma_A:=\inf \left\{t\ge 0;\ \epsilon_t^{a_k}\in A\right\}.
\end{align}
Now, we define 
\begin{align}
\eta_l^{a_k,A^c}:=\int_{(0,l]}\int_{\mathscr{D}^{a_k}}\zeta(e)\bm{N}^{a_k}\left(ds\otimes de\cap \{(0,\infty)\times A^c\}\right).
\end{align}
The first term of the right hand side of (\ref{I-1}) is
\begin{align*}
\label{I-2}
  &\P_{a_k}\left[\int_{l< \sigma_A}\int_{\mathscr{D}^{a_k}}F(\eta_{l-}^{a_k},X,e)\bm{N}
  ^{a_k}(dl\otimes de)\right]\\
  &\qquad =\P_{a_k}\left[\int_{l< \sigma_A}\int_{\mathscr{D}^{a_k}}F(\eta_{l-}^{a_k,A^c},X,e)\bm{N}^{a_k}(dl\otimes de\cap \{(0,\infty)\times A^c\})\right]\\
  &\qquad =\P_{a_k}\left[\P_{a_k}\left[\int_{l< t}\int_{\mathscr{D}^{a_k}}F(\eta_{l-}^{a_k,A^c},X,e)\bm{N}^{a_k}(dl\otimes de\cap \{(0,\infty)\times A^c\})\right]_{t=\sigma_A}\right]\\
  &\qquad =\P_{a_k}\left[\int_0^{\sigma_A}\int_{\mathscr{D}^{a_k}}F(\eta_{l-}^{a,A^c},X,e)\bm{n}^{a_k}(de\cap A^c)dl\right]\\
  &\qquad =\P_{a_k}\left[\int_0^{\sigma_A}\int_{\mathscr{D}^{a_k}}\left(\int_0^{T_{a_k}}e^{-\lambda_{a_k}l}qe^{-q(s+\eta_{l-}^{a_k,A^c})}\right)\bm{n}^{a_k}(de\cap A^c)dl\right]\\
  &\qquad =\P_{a_k}\left[\int_0^{\sigma_A}e^{-\lambda_{a_k}l}e^{-q\eta_{l-}^{a_k,A^c}}dl\right]\bm{n}^{a_k}\left[\int_0^{T_{a_k}}qe^{-qs}ds,A^c\right]\\
  &\qquad =\P_{a_k}\left[\int_0^\infty\left(\int_0^{t}e^{-\lambda_{a_k}l}e^{-q\eta_{l-}^{a_k,A^c}}dl\right)\bm{n}^{a_k}(A)e^{-\bm{n}^{a_k}(A)t}dt\right]\bm{n}^{a_k}\left[1-e^{-qT_{a_k}},A^c\right]\\
  &\qquad =\left(\int_0^\infty\left(\int_0^{t}e^{-\lambda_{a_k}l}\P_{a_k}\left[e^{-q\eta_{l-}^{a_k,A^c}}\right]dl\right)\bm{n}^{a_k}(A)e^{-\bm{n}^{a_k}(A)t}dt\right)\bm{n}^{a_k}\left[1-e^{-qT_{a_k}},A^c\right],
\stepcounter{equation}\tag{\theequation}
\end{align*}
where in the second equality, we used independent increments of the Poisson point process, in the third equality, we used the compensation formula (see, e.g., Theorem 4.4 of \cite{Kyp}), and in the sixth equality, we used the facts that $\sigma_A$ and $\eta_{l-}^{a_k,A^c}$ are independent and that $\sigma_A$ has an exponential distribution with its parameter $\bm{n}^{a_k}(A)$ (see, e.g., Lemma 6.17 of \cite{Kyp}). Since $(\eta_l^{a_k})_{l\ge 0}$ is a subordinator with its \Levy\ measure $\bm{n}^{a_k}(T_{a_k}\in dx)$ and no drift, we have
\begin{align*}
  \P_{a_k}\left[e^{-q\eta_{l-}^{a_k,A^c}}\right]&=\P_{a_k}\left[e^{-q\eta_{l}^{a_k,A^c}}\right]\\
  &=\exp\left\{-l\int_{(0,\infty)}\left(1-e^{-qx}\right)\bm{n}^{a_k}\left(T_{a_k}\in dx\cap A^c\right)\right\}\\
  &=\exp \left\{-l\bm{n}^{a_k}\left[1-e^{-qT_{a_k}},A^c\right]\right\}.
  \stepcounter{equation}\tag{\theequation}
\end{align*}
Thus, we have
\begin{align*}
\label{I-3}
  (\ref{I-2})&=\left(\int_0^\infty\left(\int_0^{t}e^{-\lambda_{a_k}l}e^{-l\bm{n}^{a_k}[1-e^{-qT_{a_k}},A^c]}dl\right)\bm{n}^{a_k}(A)e^{-\bm{n}^{a_k}(A)t}dt\right)\bm{n}^{a_k}\left[1-e^{-qT_{a_k}},\ A^c\right]\\
  &=\frac{\bm{n}^{a_k}\left[1-e^{-qT_{a_k}},A^c\right]}{\bm{n}^{a_k}(A)+\bm{n}^{a_k}[1-e^{-qT_{a_k}},A^c]+\lambda_{a_k}}.
\stepcounter{equation}\tag{\theequation}
\end{align*}
The second term of the right hand side of (\ref{I-1}) is
\begin{align*}
\label{II-2}
&\P_{a_k}\left[\int_{l= \sigma_A}\int_{\mathscr{D}^{a_k}}F\left(\eta_{l-}^{a_k},X,e\right)\bm{N}^{a_k}(dl\otimes de)\right]\\
  &\qquad =\P_{a_k}\left[\int_{l= \sigma_A}\int_{\mathscr{D}^{a_k}}F\left(\eta_{l-}^{a_k,A^c},X,e\right)\bm{N}^{a_k}(dl\otimes de\cap \{(0,\infty)\times A\})\right]\\
  &\qquad =\P_{a_k}\left[F\left(\eta_{\sigma_A-}^{a_k,A^c},X,\epsilon_{\sigma_A}^{a_k}\right)\right]\\
  &\qquad =\P_{a_k}\left[\int_0^\infty \left(\int_{\mathscr{D}^{a_k}}F\left(\eta_{l-}^{a_k,A^c},X,e\right)\bm{n}^{a_k}(de|A)\right)\bm{n}^{a_k}(A)e^{-\bm{n}^{a_k}(A)l}dl\right]\\
  &\qquad =\P_{a_k}\left[\int_0^\infty \left(\int_{\mathscr{D}^{a_k}}F\left(\eta_{l-}^{a_k,A^c},X,e\right)\bm{n}^{a_k}(de\cap A)\right)e^{-\bm{n}^{a_k}(A)l}dl\right]\\
  &\qquad =\P_{a_k}\left[\int_0^\infty \left(\int_{\mathscr{D}^{a_k}}\left(\int_0^{T_{A_n\setminus \{a_k\}}}e^{-\lambda_{a_k}l}qe^{-q(s+\eta_{l-}^{a_k,A^c})}ds\right)\bm{n}^{a_k}(de\cap A)\right)e^{-\bm{n}^{a_k}(A)l}dl\right]\\
   &\qquad =\P_{a_k}\left[\int_0^\infty e^{-\lambda_{a_k}l}e^{-q\eta_{l-}^{a_k,A^c}}e^{-\bm{n}^{a_k}(A)l}dl\right]\bm{n}^{a_k}\left[\int_0^{T_{A_n}\setminus \{a_k\}}qe^{-qs}ds,A\right]\\
  &\qquad =\left(\int_0^\infty e^{-\lambda_{a_k}l}\P_{a_k}\left[e^{-q\eta_{l-}^{a_k,A^c}}\right]e^{-\bm{n}^{a_k}(A)l}dl\right)\bm{n}^{a_k}\left[1-e^{-qT_{A_n\setminus \{a_k\}}},A\right]\\
  &\qquad =\left(\int_0^\infty e^{-\lambda_{a_k}l}e^{-l\bm{n}^{a_k}[1-e^{-qT_{a_k}},\ A^c]}e^{-\bm{n}^{a_k}(A)l}dl\right)\bm{n}^{a_k}\left[1-e^{-qT_{A_n\setminus \{a_k\}}},A\right]\\
  &\qquad =\frac{\bm{n}^{a_k}\left[1-e^{-qT_{A_n\setminus \{a_k\}}},A\right]}{\bm{n}^{a_k}(A)+\bm{n}^{a_k}[1-e^{-qT_{a_k}},A^c]+\lambda_{a_k}},
  \stepcounter{equation}\tag{\theequation}
\end{align*}
where in the third equality, we used the facts that $\epsilon_{\sigma_A}^{a_k}$ has a distribution $\bm{n}^{a_k}(\cdot |A)$ and $\sigma_A$ has an exponential distribution with parameter $\bm{n}^{a_k}(A).$ By (\ref{A-2}), (\ref{nMP}), and (\ref{A-1}), we have
\begin{align*}
\label{II-3}
   &\bm{n}^{a_k}\left[1-e^{-qT_{a_k}},A^c\right]+\bm{n}^{a_k}\left[1-e^{-qT_{A_n\setminus \{a_k\}}},A\right]\\
  &\qquad =\bm{n}^{a_k}\left[1-e^{-qT_{a_k}},\ T_{A_n\setminus \{a_k\}}=\infty\right]+\bm{n}^{a_k}\left[1-e^{-qT_{A_n\setminus \{a_k\}}},\ T_{A_n\setminus \{a_k\}}<\infty\right]\\
  &\qquad =\bm{n}^{a_k}\left[1-e^{-qT_{a_k}}\right]-\bm{n}^{a_k}\left[1-e^{-qT_{a_k}},\ T_{A_n\setminus \{a_k\}}<\infty\right]+\bm{n}^{a_k}\left[1-e^{-qT_{A_n\setminus \{a_k\}}},\ T_{A_n\setminus \{a_k\}}<\infty\right]\\
  &\qquad =\frac{1}{r_q(0)}+\bm{n}^{a_k}\left[e^{-qT_{a_k}},\ T_{A_n\setminus \{a_k\}}<\infty\right]-\bm{n}^{a_k}\left[e^{-qT_{A_n\setminus \{a_k\}}},\ T_{A_n\setminus \{a_k\}}<\infty\right]\\
  &\qquad =\frac{1}{r_q(0)}+\sum_{\substack{i;\ i\le n\\ i\neq k}}\bm{n}^{a_k}\left[e^{-qT_{a_k}},\ T_{a_i}=T_{A_n\setminus \{a_k\}}<\infty\right]-\bm{n}^{a_k}\left[e^{-qT_{A_n\setminus \{a_k\}}},\ T_{A_n\setminus \{a_k\}}<\infty\right]\\
  &\qquad =\frac{1}{r_q(0)}+\sum_{\substack{i;\ i\le n\\ i\neq k}}\bm{n}^{a_k}\left[e^{-qT_{a_i}},\ T_{a_i}=T_{A_n\setminus \{a_k\}}<\infty\right]\P_{a_i}\left[e^{-qT_{a_k}}\right]\\
  &\qquad \qquad -\bm{n}^{a_k}\left[e^{-qT_{A_n\setminus \{a_k\}}},\ T_{A_n\setminus \{a_k\}}<\infty\right]\\
  &\qquad =\frac{1}{r_q(0)}+\sum_{\substack{i;\ i\le n\\ i\neq k}}\bm{n}^{a_k}\left[e^{-qT_{a_i}},\ T_{a_i}=T_{A_n\setminus \{a_k\}}<\infty\right]\frac{r_q(a_k-a_i)}{r_q(0)}\\
  &\qquad \qquad -\sum_{\substack{i;\ i\le n\\ i\neq k}}\bm{n}^{a_k}\left[e^{-qT_{a_i}},\ T_{a_i}=T_{A_n\setminus \{a_k\}}<\infty\right]\\
   &\qquad =\frac{1}{r_q(0)}+\sum_{\substack{i;\ i\le n\\ i\neq k}}\bm{n}^{a_k}\left[e^{-qT_{a_i}},\ T_{a_i}=T_{A_n\setminus \{a_k\}}<\infty\right]\frac{r_q(a_k-a_i)-r_q(0)}{r_q(0)}.
  \stepcounter{equation}\tag{\theequation}
\end{align*}
Therefore, by (\ref{h}), (\ref{I-1}), (\ref{I-3}), (\ref{II-2}), (\ref{II-3}), and (\ref{Ieq}), we have
\begin{align*}
  \lim_{q\to 0+}r_q(0)I_{k}^q&=\lim_{q\to \infty}\frac{1}{\bm{n}^{a_k}(A)+\bm{n}^{a_k}[1-e^{-qT_{a_k}},A^c]+\lambda_{a_k}}\\
  &\qquad \times r_q(0)\left\{\frac{1}{r_q(0)}+\sum_{\substack{i;\ i\le n\\ i\neq k}}\bm{n}^{a_k}\left[e^{-qT_{a_i}},\ T_{a_i}=T_{A_n\setminus \{a_k\}}<\infty\right]\frac{r_q(a_k-a_i)-r_q(0)}{r_q(0)}\right\}\\
  &=I_{k}^{(0)}.
    \stepcounter{equation}\tag{\theequation}
\end{align*}
The proof is complete.
\end{proof}

\begin{proof}[The proof of Proposition \ref{exp-conv}]
By Propositions 5.1 and 5.2 of Iba \cite{Iba}, we know that
\begin{align}
\lim_{q\to 0+}r_q(0)\left(1-\P_x\left[e^{-qT_{A_n}}\right]\right)&=\varphi_{A_n}^{(0)}(x),\\
\lim_{q\to 0+}r_q(0)\left(1-\P_{X_t}\left[e^{-qT_{A_n}}\right]\right)&=\varphi_{A_n}^{(0)}(X_t)\qquad \text{in}\ L^1(\P_x).
\end{align}
Thus, by (\ref{exp1}) and by Lemma \ref{exp-lem}, we obtain
\begin{align*}
   &\lim_{q\to 0+}r_q(0)\P_x\left[\Gamma_{\bm{e}_q}^{(n)}\right]\\
   &\qquad =\lim_{q\to 0+}r_q(0)\left(1-\P_x\left[e^{-qT_{A_n}}\right]\right)+\lim_{q\to 0+}r_q(0)\sum_{k=1}^n \P_x\left[e^{-qT_{a_k}},\ T_{a_k}=T_{A_n}\right]\P_{a_k}\left[\Gamma_{\bm{e}_q}^{(n)}\right]\\
   &\qquad =\varphi_{A_n}(x)+\sum_{k=1}^n \left\{\P_x\left(T_{a_k}=T_{A_k}\right)\cdot \lim_{q\to 0+}r_q(0)\P_{a_k}\left[\Gamma_{\bm{e}_q}^{(n)}\right]\right\}\\
   &\qquad =\varphi_{A_n}^{(0),\lambda_{a_1},...,\lambda_{a_n}}(x).
    \stepcounter{equation}\tag{\theequation}
\end{align*}
Similarly, we also obtain $L^1$-convergence. Therefore, the proof is complete.
\end{proof}

\begin{Rem}
\label{Rem1}
By (4.10) of Iba-Yano \cite{IY-3}, we know that
\begin{align}
\label{P7}
  \P_b\left[e^{-\lambda L_{T_c}^a}\right]=\frac{1+\lambda \{h(a-c)+h(b-a)-h(b-c)\}}{1+\lambda h^B(c-a)}\qquad \text{for}\ a,b,c\in \R.
\end{align}
Thus, we have
  \begin{align*}
  J_{k,i}&=\P_{a_k}\left[e^{-\lambda_{a_k}L_{T_{a_i}}^{a_k}},\ T_{a_i}=T_{A_n\setminus \{a_k\}}\right]\\
  &=\P_{a_k}\left[e^{-\lambda_{a_k}L_{T_{a_i}}^{a_k}}\right]-\P_{a_k}\left[e^{-\lambda_{a_k}L_{T_{a_i}}^{a_k}},\ T_{A_n\setminus \{a_k\}}<T_{a_i}\right]\\
  &=\P_{a_k}\left[e^{-\lambda_{a_k}L_{T_{a_i}}^{a_k}}\right]-\sum_{\substack{j;\ j\le n\\ j\neq k,i}}\P_{a_k}\left[e^{-\lambda_{a_k}L_{T_{a_i}}^{a_k}},\ T_{a_j}=T_{A_n\setminus \{a_k\}}\right]\\
   &=\P_{a_k}\left[e^{-\lambda_{a_k}L_{T_{a_i}}^{a_k}}\right]-\sum_{\substack{j;\ j\le n\\ j\neq k,i}}\P_{a_k}\left[e^{-\lambda_{a_k}L_{T_{a_j}}^{a_k}},\ T_{a_j}=T_{A_n\setminus \{a_k\}}\right]\P_{a_j}\left[e^{-\lambda_{a_k}L_{T_{a_i}}^{a_k}}\right]\\
   &=\P_{a_k}\left[e^{-\lambda_{a_k}L_{T_{a_i}}^{a_k}}\right]-\sum_{\substack{j;\ j\le n\\ j\neq k,i}}J_{k,j}\P_{a_j}\left[e^{-\lambda_{a_k}L_{T_{a_i}}^{a_k}}\right]\\
   &=\frac{1}{1+\lambda_{a_k}h^B(a_i-a_k)}-\sum_{\substack{j;\ j\le n\\ j\neq k,i}}J_{k,j}\cdot \frac{1+\lambda_{a_k}\{h(a_k-a_i)+h(a_j-a_k)-h(a_j-a_i)\}}{1+\lambda_{a_k}h^B(a_i-a_k)}\\
   &=:C_{k;i,k}-\sum_{\substack{j;\ j\le n\\ j\neq k,i}}J_{k,j}C_{k;i,j}.
   \stepcounter{equation}\tag{\theequation}
\end{align*}
We obtain the following simultaneous equations:
\begin{align}
  \begin{pmatrix}
    J_{k,1}\\
    J_{k,2}\\
    \vdots\\
    J_{k,n}
  \end{pmatrix}=
  \begin{pmatrix}
    C_{k;1,k}\\
    C_{k;2,k}\\
    \vdots\\
    C_{k;n,k}
  \end{pmatrix}
  -\begin{pmatrix}
    0 & C_{k;1,2} & \cdots & C_{k;1,n-1}&C_{k;1,n}\\
    C_{k;2,1} & 0  & \cdots &C_{k;2,n-1}&C_{k;2,n}\\
    \vdots &\vdots &&\vdots & \vdots \\
    C_{k;n,1} & C_{k;n,2} &\cdots & C_{k;n,n-1} & 0
  \end{pmatrix}
  \begin{pmatrix}
    J_{k,1}\\
    J_{k,2}\\
    \vdots\\
    J_{k,n}
  \end{pmatrix}.
\end{align}
We rewrite this as
  \begin{align}
    \bm{j}_k=\bm{c}_k-\mathbb{C}_k\bm{j}_k.
  \end{align}
We do not know whether the matrix $\mathbb{E}_{n-1}+\mathbb{C}_k$ is invertible.
\end{Rem}

\begin{Rem}
  For $k\neq n$, we have
  \begin{align*}
  \P_{x}(T_{a_k}=T_{A_{n}})&=\P_{x}(T_{a_k}<T_{a_n})-\P_{x}(T_{A_{n}\setminus \{a_k,a_n\}}<T_{a_k}<T_{a_n})\\
  &=\P_{x}(T_{a_k}<T_{a_n})-\sum_{\substack{i;\ i\le n-1\\ i\neq k}}\P_{x}(T_{a_i}=T_{A_{n}\setminus \{a_k,a_n\}}<T_{a_k}<T_{a_n})\\
  &=\P_{x}(T_{a_k}<T_{a_n})-\sum_{\substack{i;\ i\le n-1\\ i\neq k}}\P_{x}(T_{a_i}=T_{A_{n}})\P_{a_i}(T_{a_k}<T_{a_n})\\
  &=:p_{x;k,n}-\sum_{\substack{i;\ i\le n-1\\ i\neq k}}\P_{x}(T_{a_i}=T_{A_{n}})p_{i;k,n}.
  \stepcounter{equation}\tag{\theequation}
\end{align*}
We obtain the following simultaneous equations:
\begin{align}
  \begin{pmatrix}
    \P_{x}(T_{a_1}=T_{A_{n}})\\
    \P_{x}(T_{a_2}=T_{A_{n}})\\
    \vdots\\
    \P_{x}(T_{a_{n-1}}=T_{A_{n}})\\
    \P_{x}(T_{a_n}=T_{A_{n}})
  \end{pmatrix}=
  \begin{pmatrix}
    p_{x;1,n}\\
    p_{x;2,n}\\
    \vdots\\
    p_{x;n-1,n}\\
    1
  \end{pmatrix}
  -\begin{pmatrix}
    0 & p_{2;1,n} & \cdots & p_{n-1;1,n}&0\\
    p_{1;2,n} & 0  & \cdots &p_{n-1;2,n}&0\\
    \vdots &\vdots &&\vdots & \vdots \\
    p_{1;n-1,n} & p_{2;n-1,n} &\cdots & 0 & 0\\
    1&1&\cdots &1&0
  \end{pmatrix}
  \begin{pmatrix}
    \P_{x}(T_{a_1}=T_{A_{n}})\\
    \P_{x}(T_{a_2}=T_{A_{n}})\\
    \vdots\\
    \P_{x}(T_{a_{n-1}}=T_{A_{n}})\\
    \P_{x}(T_{a_n}=T_{A_{n}})
  \end{pmatrix}.
\end{align}
We rewrite this as
  \begin{align}
    \bm{p}_n(x)=\bm{p}_n'-\mathbb{P}_n\bm{p}_n(x).
  \end{align}
We do not know whether the matrix $\mathbb{E}_{n}+\mathbb{P}_n$ is invertible.
\end{Rem}

\subsection{Penalization}
Although the proof of the following proposition is parallel to Theorem 3.4 of Iba-Yano \cite{IY-3}, we give the proof for completeness of this paper.\\

\begin{thm}
\label{penal-1}
  The process $\left(\varphi_{A_n}^{(0),\lambda_{a_1},...,\lambda_{a_n}}(X_t)\Gamma_t^{(n)}\right)_{t\ge 0}$ is a martingale. Moreover, it holds that
  \begin{align}
    \lim_{q\to 0+}\P_x\left[F_s\cdot \Gamma_{\bm{e}_q}^{(n)}\right]=\P_x\left[F_s\cdot \varphi_{A_n}^{(0),\lambda_{a_1},...,\lambda_{a_n}}(X_s)\Gamma_s^{(n)}\right]
  \end{align}
  for all bounded $\F_t$-measurable functionals $F_t$.
\end{thm}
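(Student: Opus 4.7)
The plan is to derive the penalization limit as a consequence of the $L^1$-convergence in Proposition~\ref{exp-conv} via the Markov property at time $s$, and then to read off the martingale property from the resulting identity, as in the proof of Theorem~3.4 of Iba--Yano \cite{IY-3}. I will write the left-hand side with the normalization $r_q(0)$, consistent with the general framework of Section~\ref{Sec3} and with Proposition~\ref{exp-conv}.

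First, using Fubini and the independence of $\bm{e}_q$ from $(X_t)_{t\ge 0}$, I would split
\begin{align*}
  r_q(0)\,\P_x\!\left[F_s\Gamma_{\bm{e}_q}^{(n)}\right] = r_q(0)\int_0^s qe^{-qt}\P_x[F_s\Gamma_t^{(n)}]\,dt + r_q(0)\int_s^\infty qe^{-qt}\P_x[F_s\Gamma_t^{(n)}]\,dt.
\end{align*}
The first piece is bounded by $\|F_s\|_\infty\,r_q(0)(1-e^{-qs})\le \|F_s\|_\infty\,s\cdot qr_q(0)$, and vanishes as $q\to 0+$ since $qr_q(0)\to 0$ under recurrence (via the Abelian identity $r_q(0)=\P_0\!\left[\int_0^\infty e^{-qt}dL_t^0\right]$ combined with $L_\infty^0=\infty$). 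For the second piece, the key identity $\Gamma_t^{(n)}=\Gamma_s^{(n)}\exp\!\bigl(-\sum_k \lambda_{a_k}(L_t^{a_k}-L_s^{a_k})\bigr)$ together with the Markov property yields, for $t\ge s$, $\P_x[F_s\Gamma_t^{(n)}\mid \F_s]=F_s\Gamma_s^{(n)}\,\P_{X_s}[\Gamma_{t-s}^{(n)}]$. After the change of variable $u=t-s$, the second piece becomes $e^{-qs}\,\P_x\bigl[F_s\Gamma_s^{(n)}\cdot r_q(0)\,\P_{X_s}[\Gamma_{\bm{e}_q}^{(n)}]\bigr]$.

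Since $|F_s\Gamma_s^{(n)}|\le \|F_s\|_\infty$ is bounded and Proposition~\ref{exp-conv} supplies $r_q(0)\,\P_{X_s}[\Gamma_{\bm{e}_q}^{(n)}]\to \varphi_{A_n}^{(0),\lambda_{a_1},\ldots,\lambda_{a_n}}(X_s)$ in $L^1(\P_x)$, this expression converges to $\P_x[F_s\,\varphi_{A_n}^{(0),\lambda_{a_1},\ldots,\lambda_{a_n}}(X_s)\Gamma_s^{(n)}]$, which is the announced penalization limit. The martingale property then follows essentially for free: for any $0\le s\le t$ and any bounded $\F_s$-measurable $F_s$, the functional $F_s$ is also $\F_t$-measurable, so applying the established penalization identity at level $s$ and at level $t$ gives
\begin{align*}
  \P_x\!\left[F_s\,\varphi_{A_n}^{(0),\lambda_{a_1},\ldots,\lambda_{a_n}}(X_s)\Gamma_s^{(n)}\right]
  = \lim_{q\to 0+}r_q(0)\,\P_x\!\left[F_s\Gamma_{\bm{e}_q}^{(n)}\right]
  = \P_x\!\left[F_s\,\varphi_{A_n}^{(0),\lambda_{a_1},\ldots,\lambda_{a_n}}(X_t)\Gamma_t^{(n)}\right],
\end{align*}
which is precisely the martingale identity for $\bigl(\varphi_{A_n}^{(0),\lambda_{a_1},\ldots,\lambda_{a_n}}(X_t)\Gamma_t^{(n)}\bigr)_{t\ge 0}$.

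The main obstacle I anticipate is the clean passage to the limit through the outer expectation in the second piece; this is in fact routine because $F_s\Gamma_s^{(n)}$ is uniformly bounded and the convergence in Proposition~\ref{exp-conv} holds in $L^1(\P_x)$, not merely pointwise. The only other delicate point is the vanishing of the initial segment, which rests on the standard recurrence estimate $qr_q(0)\to 0$ under condition~(A).
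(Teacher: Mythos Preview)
Your proof is correct and takes essentially the same approach as the paper's: the same split at time $s$, the same application of the Markov property and Proposition~\ref{exp-conv}, and the same estimate $qr_q(0)\to 0$ (which the paper cites from Tsukada~\cite{Tukada} rather than arguing directly). The only cosmetic difference is ordering: the paper packages the computation as $L^1$-convergence of the conditional-expectation martingale $M_s^{q,(n)}:=r_q(0)\,\P_x[\Gamma_{\bm{e}_q}^{(n)}\mid\F_s]$ and then invokes the fact that an $L^1$-limit of martingales is a martingale, whereas you establish the penalization limit first and read off the martingale identity from it.
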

\begin{proof}
  We define
  \begin{align}
    N_t^{q,(n)}&:=r_q(0)\P_x\left[\Gamma_{\bm{e}_q}^{(n)};\ t<\bm{e}_q|\F_t\right],\\
    M_t^{q,(n)}&:=r_q(0)\P_x\left[\Gamma_{\bm{e}_q}^{(n)}|\F_t\right].
  \end{align}
  By the lack of memory property of an exponential distribution, we have
  \begin{align*}
    N_t^{q,(n)}&=r_q(0)\P_x\left[\Gamma_{\bm{e}_q}^{(n)};\ t<\bm{e}_q|\F_t\right]\\
    &=r_q(0)\P_x^{\F_t}\left[\Gamma_{\bm{e}_q}^{(n)}|\ t<\bm{e}_q\right]\P_x(t<\bm{e}_q)\\
&=r_q(0)\Gamma_t^{(n)}\P_{X_t}\left[\Gamma_{\bm{e}_q}^{(n)}\right]\cdot e^{-qt},
    \stepcounter{equation}\tag{\theequation}
  \end{align*}
  where $\P_x^{\F_t}[\cdot]$ denotes the conditional expectation for $\P_x$ given $\F_t$. By Proposition \ref{exp-conv}, we have
  \begin{align*}
  \lim_{q\to 0+}N_t^{q,(n)}&=\lim_{q\to 0+}r_q(0)\Gamma_t^{(n)}\P_{X_t}\left[\Gamma_{\bm{e}_q}^{(n)}\right]\cdot e^{-qt}\\
  &=\varphi_{A_n}^{(0),\lambda_{a_1},...,\lambda_{a_n}}(X_t)\Gamma_t^{(n)}\qquad \text{a.s.\ and\ in}\ L^1(\P_x).
    \stepcounter{equation}\tag{\theequation}
  \end{align*}
  Since 
  \begin{align}
    \lim_{q\to 0+}qr_q(0)=0
  \end{align}
  by Lemma 15.5 of Tsukada \cite{Tukada}, we have
  \begin{align*}
    \lim_{q\to 0+}\left(M_t^{q,(n)}-N_t^{q,(n)}\right)&=\lim_{q\to 0+}r_q(0)\P_{x}\left[\Gamma_{\bm{e}_q}^{(n)},\ \bm{e}_q\le t|\F_t\right]\\
    &=\lim_{q\to 0+}r_q(0)\int_0^t \Gamma_u^{(n)}\cdot qe^{-qu}du\\
    &\le \lim_{q\to 0+}qr_q(0)\cdot t\\
    &=0.
    \stepcounter{equation}\tag{\theequation}
  \end{align*}
  Thus, we obtain
  \begin{align}
    \lim_{q\to 0+}M_t^{q,(n)}=\lim_{q\to 0+}N_t^{q,(n)}=\varphi_{A_n}^{(0),\lambda_{a_1},...,\lambda_{a_n}}(X_t)\Gamma_t^{(n)}\qquad \text{a.s.\ and\ in}\ L^1(\P_x).
  \end{align}
  Since $(M_t^{q,(n)})_{t\ge 0}$ is a non-negative martingale, its $L^1$-limit $(\varphi_{A_n}^{(0),\lambda_{a_1},...,\lambda_{a_n}}(X_t)\Gamma_t^{(n)})_{t\ge 0}$ is also a non-negative martingale (see, e.g., Proposition 1.3 of \cite{CW}).

Finally, we obtain by the $L^1$-convergence,
\begin{align*}
  \lim_{q\to 0+}r_q(0)\P_x[F_s\cdot \Gamma_{\bm{e}_q}^{(n)}]&=\lim_{q\to 0+}r_q(0)\P_x[F_s\cdot \P_x[\Gamma_{\bm{e}_q}^{(n)}|\F_s]]\\
  &=\lim_{q\to 0+}\P_x[F_s\cdot M_s^{q,(n)}]\\
  &=\P_x[F_s\cdot \varphi_{A_n}^{(0),\lambda_{a_1},...,\lambda_{a_n}}(X_s)\Gamma_s^{(n)}].
  \stepcounter{equation}\tag{\theequation}
\end{align*}
for a bounded $\F_t$-measurable functional $F_t$. The proof is complete.
\end{proof}


\section{One-point hitting time clock}
\label{Sec4}
\subsection{Expectation}
Let us calculate the expectation $\P_x\left[\Gamma_{T_b}^{(n)}\right]$. We have
\begin{align*}
\label{hit-1}
\P_x\left[\Gamma_{T_b}^{(n)}\right]&=\P_x(T_b<T_{A_n})+\P_x\left[\Gamma_{T_b}^{(n)},\ T_{A_n}<T_b\right]\\
&=\P_x(T_b<T_{A_n})+\sum_{k=1}^n\P_x\left[\Gamma_{T_b}^{(n)},\ T_{a_k}=T_{A_n}\wedge T_b\right]\\
&=\P_x(T_b<T_{A_n})+\sum_{k=1}^n\P_x(T_{a_k}=T_{A_n}\wedge T_b)\P_{a_k}\left[\Gamma_{T_b}^{(n)}\right].
 \stepcounter{equation}\tag{\theequation}
\end{align*}
We now calculate the expectation $A_k^b:=\P_{a_k}\left[\Gamma_{T_b}^{(n)}\right]$. We have
\begin{align*}
\label{hit-2}
  A_k^b&=\P_{a_k}\left[\Gamma_{T_b}^{(n)}\right]\\
  &=\sum_{\substack{i;\ i\le n\\ i\neq k}}\P_{a_k}\left[\Gamma_{T_b}^{(n)},\ T_{a_i}=T_{A_n\setminus \{a_k\}}\wedge T_b\right]+\P_{a_k}\left[\Gamma_{T_b}^{(n)},\ T_b<T_{A_n\setminus \{a_k\}}\right]\\
  &=\sum_{\substack{i;\ i\le n\\ i\neq k}}\P_{a_k}\left[e^{-\lambda_{a_k}L_{T_{a_i}}^{a_k}},\ T_{a_i}=T_{A_n\setminus \{a_k\}}\wedge T_b\right]\P_{a_i}\left[\Gamma_{T_b}^{(n)}\right]+\P_{a_k}\left[e^{-\lambda_{a_k}L_{T_b}^{a_k}},\ T_b<T_{A_n\setminus \{a_k\}}\right]\\
  &=:\sum_{\substack{i;\ i\le n\\ i\neq k}}J_{k,i}^bA_i^b+I_{k}^b.
  \stepcounter{equation}\tag{\theequation}
\end{align*}
Thus, we obtain the following simultaneous equations:
\begin{align}
  \begin{pmatrix}
    A_1^b\\
    A_2^b\\
    \vdots\\
    A_n^b
  \end{pmatrix}=
  \begin{pmatrix}
    0 & J_{1,2}^b & \cdots & J_{1,n-1}^b&J_{1,n}^b\\
    J_{2,1}^b & 0  & \cdots &J_{2,n-1}^b&J_{2,n}^b\\
    \vdots &\vdots &&\vdots & \vdots \\
    J_{n,1}^b & J_{n,2}^b &\cdots & J_{n,n-1}^b & 0
  \end{pmatrix}
  \begin{pmatrix}
    A_1^b\\
    A_2^b\\
    \vdots\\
    A_n^b
  \end{pmatrix}
  +
  \begin{pmatrix}
    I_1^b\\
    I_2^b\\
    \vdots\\
    I_n^b
  \end{pmatrix}.
\end{align}
We rewrite this as
\begin{align}
\label{hit-3}
  \bm{a}_n^b=\mathbb{J}_n^b\bm{a}_n^b+\bm{i}_n^b.
\end{align}
Since strictly diagonally dominance can be shown in the same way as in Lemma \ref{coeff}, the solution of these simultaneous equations is
\begin{align}
  \bm{a}_n^b=(\mathbb{E}_n-\mathbb{J}_n^b)^{-1}\bm{i}_n^b.
\end{align}
Therefore, by (\ref{hit-1}), (\ref{hit-2}), and (\ref{hit-3}), $\P_x\left[\Gamma_{T_b}^{(n)}\right]$has been computed.

\subsection{Convergence}
Let us find the limit of $h^B(b)\P_x\left[\Gamma_{T_b}^{(n)}\right]$ as $b\to \pm \infty.$\\

\begin{prop}
\label{hit}
  It holds that
  \begin{align}
    \lim_{b\to \pm \infty}h^B(b)\P_{x}\left[\Gamma_{T_b}^{(n)}\right]&=\varphi_{A_n}^{(\pm 1),\lambda_{a_1},...,\lambda_{a_n}}(x),\\
  \lim_{b\to \pm \infty}h^B(b)\P_{X_t}\left[\Gamma_{T_b}^{(n)}\right]&=\varphi_{A_n}^{(\pm 1),\lambda_{a_1},...,\lambda_{a_n}}(X_t)\qquad \text{in}\ L^1(\P_x).
  \end{align}
\end{prop}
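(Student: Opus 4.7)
The plan is to mirror the template of Proposition \ref{exp-conv}, using the explicit decomposition already established in (\ref{hit-1})--(\ref{hit-3}). Since $\P_x[\Gamma_{T_b}^{(n)}] = \P_x(T_b<T_{A_n}) + \sum_k \P_x(T_{a_k}=T_{A_n}\wedge T_b)\, A_k^b$ and $\bm{a}_n^b = (\mathbb{E}_n-\mathbb{J}_n^b)^{-1}\bm{i}_n^b$, it suffices to analyze the limits of the four pieces: (a) $h^B(b)\P_x(T_b<T_{A_n})$, (b) $\P_x(T_{a_k}=T_{A_n}\wedge T_b)$, (c) $J_{k,i}^b$, and (d) $h^B(b)\,\bm{i}_n^b$. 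The matrix $\mathbb{E}_n-\mathbb{J}_n^b$ is invertible by the same strict-diagonal-dominance argument as in Lemma \ref{coeff}, and its inverse depends continuously on $\mathbb{J}_n^b$, so once the entry-wise limits are in hand, $h^B(b)\bm{a}_n^b \to (\mathbb{E}_n-\mathbb{J}_n)^{-1}\bm{i}_n^{(\pm 1)}$ follows automatically, and the resulting expression assembles exactly into $\varphi_{A_n}^{(\pm 1),\lambda_{a_1},\ldots,\lambda_{a_n}}(x)$ via (\ref{func-1}).

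For the easy pieces: (a) is a direct citation from Iba \cite{Iba}, where the one-point hitting-clock limit of $h^B(b)(1-\P_x[e^{-qT_{A_n}}])$-type quantities is identified with $\varphi_{A_n}^{(\pm 1)}(x)$; (b) follows because $T_b \to \infty$ a.s.\ by recurrence, so $\{T_{a_k}=T_{A_n}\wedge T_b\}\uparrow \{T_{a_k}=T_{A_n}\}$ (up to a null event), and bounded convergence gives $\P_x(T_{a_k}=T_{A_n}\wedge T_b)\to \P_x(T_{a_k}=T_{A_n})$; (c) follows from the same bounded-convergence argument applied to $J_{k,i}^b=\P_{a_k}[e^{-\lambda_{a_k}L_{T_{a_i}}^{a_k}},\,T_{a_i}=T_{A_n\setminus\{a_k\}}\wedge T_b]$, yielding $J_{k,i}^b\to J_{k,i}$ as defined in Theorem \ref{Thm1}.

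The main obstacle is (d), namely $\lim_{b\to\pm\infty} h^B(b)\, I_k^b = I_k^{(\pm 1)}$. Here $I_k^b = \P_{a_k}[e^{-\lambda_{a_k}L_{T_b}^{a_k}},\,T_b<T_{A_n\setminus\{a_k\}}]$, and the approach is to write
\begin{align*}
I_k^b = \P_{a_k}\bigl[e^{-\lambda_{a_k}L_{T_b}^{a_k}}\bigr] - \sum_{i\neq k} \P_{a_k}\bigl[e^{-\lambda_{a_k}L_{T_{a_i}}^{a_k}},\,T_{a_i}=T_{A_n\setminus\{a_k\}}<T_b\bigr]\,\P_{a_i}\bigl[e^{-\lambda_{a_k}L_{T_b}^{a_k}}\bigr],
\end{align*}
where the strong Markov property at $T_{a_i}$ has been used. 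Each factor is then evaluated by the explicit formula (\ref{P7}) from Remark \ref{Rem1}, so that after multiplying by $h^B(b)$ the limit reduces to the asymptotic behaviour of $h^B(b-a_k)$, $h(a_k-b)$, and $h(a_i-b)$ as $b\to\pm\infty$. The key input is the known one-sided asymptotic $h(x-b)-h(-b)\to \mp x/\P_0[X_1^2]$, which converts $h$ into $h^{(\pm 1)}$ and produces precisely the combination $\frac{1}{\lambda_{a_k}} - \sum_{i\neq k} J_{k,i}\bigl(\frac{1}{\lambda_{a_k}} + h^{(\pm 1)}(a_i-a_k)\bigr)$ defining $I_k^{(\pm 1)}$. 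Verifying that the factor-of-$h^B(b)$ cancels leaves only these finite pieces is the technically delicate computation, but it is parallel to the analogous step in Iba-Yano \cite{IY-3} for the two-point case.

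Finally, the $L^1(\P_x)$ convergence of $h^B(b)\P_{X_t}[\Gamma_{T_b}^{(n)}]$ is obtained by dominated convergence: one checks that $h^B(b)\P_{y}[\Gamma_{T_b}^{(n)}]$ is bounded, uniformly in $b$, by an integrable function of $y$ under $\P_x$ (e.g., a constant multiple of $\varphi_{A_n}^{(\pm 1)}(y)+$ a uniformly bounded term coming from the matrix inversion, both of which are $\P_x$-integrable because $(\varphi_{A_n}^{(0)}(X_t))_{t\geq 0}$ is a martingale and an analogous domination estimate holds for the $(\pm 1)$ version). Once pointwise a.s.\ convergence is in place, this uniform domination upgrades it to $L^1$, completing the proof.
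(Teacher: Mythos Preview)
Your proposal is correct and follows essentially the same route as the paper. The decomposition of $I_k^b$ via the strong Markov property at $T_{a_i}$ is exactly what the paper does in Lemma~\ref{hit-lem}; the only minor difference is that where you propose to evaluate $h^B(b)\,\P_{a}[e^{-\lambda L_{T_b}^{c}}]$ by plugging into (\ref{P7}) and invoking the one-sided asymptotic of $h$, the paper instead cites the ready-made limit $\lim_{b\to\pm\infty} h^B(b)\,\P_{a}[e^{-\lambda L_{T_b}^{c}}]=\tfrac{1}{\lambda}+h^{(\pm 1)}(a-c)$ from (4.17) of Iba--Yano~\cite{IY-3}, which is precisely the computation you are outlining. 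Your treatment of the $L^1$ part is if anything more explicit than the paper's, which simply notes that the $L^1$ convergence of the leading term $h^B(b)\P_{X_t}(T_b<T_{A_n})$ is already established in \cite{Iba} and that the remaining terms are handled ``similarly''.
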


Before proving this proposition, we prove the following lemma.

\begin{lem}
\label{hit-lem}
  It holds that
  \begin{align}
    \lim_{b\to \pm \infty}J_{k,i}^b&=J_{k,i},\\
    \label{hitlim-2}
  \lim_{b\to \pm \infty}h^B(b)I_k^b&=I_k^{(\pm 1)}.
  \end{align}
\end{lem}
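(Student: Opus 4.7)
The plan is to handle the two convergences separately. For $J_{k,i}^b\to J_{k,i}$, dominated convergence suffices: since $X$ is a recurrent \cadlag\ \Levy\ process, its path is bounded on any finite time interval, so $T_b\to\infty$ $\P_{a_k}$-a.s.\ as $b\to\pm\infty$. On the a.s.-event $\{T_{a_i}=T_{A_n\setminus\{a_k\}}\}$, on which $T_{a_i}$ is finite, we therefore have $T_{a_i}=T_{A_n\setminus\{a_k\}}\wedge T_b$ for $|b|$ sufficiently large; the integrand $e^{-\lambda_{a_k}L_{T_{a_i}}^{a_k}}\le 1$ supplies the dominating function.

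The substantive part is (\ref{hitlim-2}). My plan is an excursion computation parallel to (but simpler than) the one in Lemma \ref{exp-lem}. Let $A:=\{T_{A_n\setminus\{a_k\}}<\infty\}$ and $B:=\{T_b<\infty\}$ be subsets of $\mathscr{D}^{a_k}$, and set $\sigma:=\inf\{l>0:\epsilon_l^{a_k}\in A\cup B\}$. The event $\{T_b<T_{A_n\setminus\{a_k\}}\}$ coincides with the event that the tagged excursion $\epsilon_\sigma^{a_k}$ hits $b$ strictly before $A_n\setminus\{a_k\}$, and on this event $L_{T_b}^{a_k}=\sigma$. Since $\sigma\sim\mathrm{Exp}(\bm{n}^{a_k}(A\cup B))$ is independent of $\epsilon_\sigma^{a_k}$, whose conditional law is $\bm{n}^{a_k}(\,\cdot\,|A\cup B)$, this yields
\begin{align*}
I_k^b=\frac{\bm{n}^{a_k}(T_b<T_{A_n\setminus\{a_k\}})}{\bm{n}^{a_k}(A\cup B)+\lambda_{a_k}}.
\end{align*}
Applying the excursion Markov property (\ref{nMP}) at $T_{A_n\setminus\{a_k\}}$ to the complementary event then expresses the numerator as
\begin{align*}
\bm{n}^{a_k}(T_b<\infty)-\sum_{\substack{i;\,i\le n\\ i\neq k}}\bm{n}^{a_k}(T_{a_i}=T_{A_n\setminus\{a_k\}}<\infty)\,\P_{a_i}(T_b<T_{a_k}).
\end{align*}

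The final step is to multiply by $h^B(b)$ and send $b\to\pm\infty$. The denominator converges to $\bm{n}^{a_k}(A)+\lambda_{a_k}$ because $\bm{n}^{a_k}(B)=1/h^B(b-a_k)\to 0$, using (\ref{A-2}) and (\ref{A-1}). For the numerator, the identity $\bm{n}^{a_k}(T_b<\infty)=1/h^B(b-a_k)$ combined with formula (\ref{P2}) for $\P_{a_i}(T_b<T_{a_k})$ reduces everything to the asymptotics $h^B(b)/h^B(b-a_k)\to 1$ and $h(a_k-b)+h(a_i-a_k)-h(a_i-b)\to h^{(\pm 1)}(a_i-a_k)$ --- the latter being the linear-in-$b$ correction of $h$ at infinity that accounts for the $\gamma x/\P_0[X_1^2]$ term in (\ref{hgam}), and is exactly the input driving the one-point OH-clock limit in Iba \cite{Iba}. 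The main obstacle I anticipate is the algebraic identification: the limit that emerges is
\begin{align*}
\frac{1-\sum_{i\neq k}h^{(\pm 1)}(a_i-a_k)\bm{n}^{a_k}(T_{a_i}=T_{A_n\setminus\{a_k\}}<\infty)}{\bm{n}^{a_k}(A)+\lambda_{a_k}},
\end{align*}
and this must be matched with the compact form $I_k^{(\pm 1)}=\tfrac{1}{\lambda_{a_k}}-\sum_{i\neq k}J_{k,i}(\tfrac{1}{\lambda_{a_k}}+h^{(\pm 1)}(a_i-a_k))$ appearing in the theorem; I expect this to follow from the $h^{(\pm 1)}$-analogue of identity (\ref{Ieq}), which is proved for $\gamma=0$ in Section \ref{S9} and should go through verbatim with $h^{(\pm 1)}$ in place of $h$.
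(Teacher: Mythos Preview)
Your argument for $J_{k,i}^b\to J_{k,i}$ matches the paper's (dominated convergence). For (\ref{hitlim-2}) your excursion route is correct, but the paper takes a shorter path that avoids excursion theory and the identification step altogether. The paper writes, via the strong Markov property of $\P_{a_k}$ at $T_{A_n\setminus\{a_k\}}$,
\[
I_k^b=\P_{a_k}\!\left[e^{-\lambda_{a_k}L_{T_b}^{a_k}}\right]-\sum_{\substack{i;\ i\le n\\ i\neq k}}J_{k,i}^b\,\P_{a_i}\!\left[e^{-\lambda_{a_k}L_{T_b}^{a_k}}\right],
\]
and then applies the ready-made one-point limit $\lim_{b\to\pm\infty}h^B(b)\P_a[e^{-\lambda L_{T_b}^c}]=\tfrac{1}{\lambda}+h^{(\pm 1)}(a-c)$ from Iba--Yano~\cite{IY-3}, equation~(4.17). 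This lands directly on the $J_{k,i}$-form of $I_k^{(\pm 1)}$ with no further algebra. Your approach instead produces the excursion-side expression $\bigl(1-\sum_{i\neq k}h^{(\pm1)}(a_i-a_k)\bm{n}^{a_k}(T_{a_i}=T_{A_n\setminus\{a_k\}}<\infty)\bigr)/(\bm{n}^{a_k}(A)+\lambda_{a_k})$ and then needs the $h^{(\pm1)}$-analogue of (\ref{Ieq}); you are right that Section~\ref{S9} (Lemmas there involve only $J_{k,i}=\bm{n}^{a_k}(T_{a_i}=T_{A_n\setminus\{a_k\}}<\infty)/(\bm{n}^{a_k}(A)+\lambda_{a_k})$ and no specific property of $h$) goes through verbatim with $h^{(\pm1)}$ in place of $h$. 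So what you gain is a direct proof of the $\gamma=\pm1$ version of (\ref{Ieq}) along the way; what the paper gains is a two-line argument that reuses the $n=1$ hitting-clock limit as a black box. Two minor points: in your numerator decomposition the Markov property actually gives $\bm{n}^{a_k}(T_{a_i}=T_{A_n\setminus\{a_k\}}<T_b)$ rather than $\bm{n}^{a_k}(T_{a_i}=T_{A_n\setminus\{a_k\}}<\infty)$, though the difference vanishes as $|b|\to\infty$; and $\bm{n}^{a_k}(T_b<\infty)=1/h^B(b-a_k)$ is (\ref{P1}), not (\ref{A-2})--(\ref{A-1}).
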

\begin{proof}
  By the bounded convergence theorem, we have
  \begin{align*}
  \lim_{b\to \pm \infty}J_{k,i}^b&=\lim_{b\to \pm \infty}\P_{a_k}\left[e^{-\lambda_{a_k}L_{T_{a_i}}^{a_k}},\ T_{a_i}=T_{A_n\setminus \{a_k\}}\wedge T_b\right]\\
  &=\P_{a_k}\left[e^{-\lambda_{a_k}L_{T_{a_i}}^{a_k}},\ T_{a_i}=T_{A_n\setminus \{a_k\}}\right]\\
  &=J_{k,i}.
  \stepcounter{equation}\tag{\theequation}
\end{align*}

Next, we show the limit (\ref{hitlim-2}). We have
\begin{align*}
  I_k^b&=\P_{a_k}\left[e^{-\lambda_{a_k}L_{T_b}^{a_k}},\ T_b<T_{A_n\setminus \{a_k\}}\right]\\
  &=\P_{a_k}\left[e^{-\lambda_{a_k}L_{T_b}^{a_k}}\right]-\sum_{\substack{i;\ i\le n\\ i\neq k}}\P_{a_k}\left[e^{-\lambda_{a_k}L_{T_b}^{a_k}},\ T_{a_i}=T_{A_n\setminus \{a_k\}}\wedge T_b\right]\\
    &=\P_{a_k}\left[e^{-\lambda_{a_k}L_{T_b}^{a_k}}\right]-\sum_{\substack{i;\ i\le n\\ i\neq k}}\P_{a_k}\left[e^{-\lambda_{a_k}L_{T_{a_i}}^{a_k}},\ T_{a_i}=T_{A_n\setminus \{a_k\}}\wedge T_b\right]\P_{a_i}\left[e^{-\lambda_{a_k}L_{T_b}^{a_k}}\right].
    \stepcounter{equation}\tag{\theequation}
\end{align*}
By (4.17) of Iba-Yano \cite{IY-3}, we know that
\begin{align}
  \lim_{b\to \pm \infty}h^B(b)\P_{a}\left[e^{-\lambda L_{T_b}^{c}}\right]=\frac{1}{\lambda}+h^{(\pm 1)}(a-c).
  \end{align}
  Therefore, we obtain
  \begin{align*}
  \lim_{b\to \pm \infty}h^B(b)I_k^b&=\lim_{b\to \pm \infty}h^B(b)\P_{a_k}\left[e^{-\lambda_{a_k}L_{T_b}^{a_k}}\right]\\
  &\qquad -\sum_{\substack{i;\ i\le n\\ i\neq k}}\left\{\P_{a_k}\left[e^{-\lambda_{a_k}L_{T_{a_i}}^{a_k}},\ T_{a_i}=T_{A_n\setminus \{a_k\}}\right]\cdot \lim_{b\to \pm \infty}h^B(b)\P_{a_i}\left[e^{-\lambda_{a_k}L_{T_b}^{a_k}}\right]\right\}\\
  &=\frac{1}{\lambda_{a_k}}-\sum_{\substack{i;\ i\le n\\ i\neq k}}J_{k,i}\left(\frac{1}{\lambda_{a_k}}+h^{(\pm 1)}(a_i-a_k)\right)\\
  &=I_{k}^{(\pm 1)}.
   \stepcounter{equation}\tag{\theequation}
\end{align*}
The proof is complete.
\end{proof}

\begin{proof}[The proof of Proposition \ref{hit}]
By Propositions 5.1 and 5.2 of Iba \cite{Iba}, we know that
\begin{align}
\lim_{b\to \pm \infty}h^B(b)\P_x(T_b<T_{A_n})&=\varphi_{A_n}^{(\pm 1)}(x),\\
\lim_{b\to \pm \infty}h^B(b)\P_{X_t}(T_b<T_{A_n})&=\varphi_{A_n}^{(\pm 1)}(X_t)\qquad \text{in}\ L^1(\P_x).
\end{align}
Thus, by (\ref{hit-1}) and by Lemma \ref{hit-lem}, we obtain
\begin{align*}
  &\lim_{b\to \pm \infty}h^B(b)\P_x\left[\Gamma_{T_b}^{(n)}\right]\\
   &\qquad =\lim_{b\to \pm \infty}h^B(b)\P_x(T_b<T_{A_n})+\lim_{b\to \pm \infty}h^B(b)\sum_{k=1}^n\P_x(T_{a_k}=T_{A_n}\wedge T_b)\P_{a_k}\left[\Gamma_{T_b}^{(n)}\right]\\
   &\qquad =\varphi_{A_n}^{(\pm 1)}(x)+\sum_{k=1}^n \left\{\P_x\left(T_{a_k}=T_{A_k}\right)\cdot \lim_{b\to \pm \infty}h^B(b)\P_{a_k}\left[\Gamma_{T_b}^{(n)}\right]\right\}\\
   &\qquad =\varphi_{A_n}^{(\pm 1),\lambda_{a_1},...,\lambda_{a_n}}(x).
    \stepcounter{equation}\tag{\theequation}
\end{align*}
Similarly, we also obtain $L^1$-convergence. Therefore, the proof is complete.
\end{proof}

\subsection{Penalization}
\begin{thm}
\label{penal-2}
  The process $\left(\varphi_{A_n}^{(\pm 1),\lambda_{a_1},...,\lambda_{a_k}}(X_t)\Gamma_t^{(n)}\right)_{t\ge 0}$ is a martingale. Moreover, it holds that
  \begin{align}
    \lim_{b\to \pm \infty}\P_x\left[F_s\cdot \Gamma_{T_b}^{(n)}\right]=\P_x\left[F_s\cdot \varphi_{A_n}^{(\pm 1),\lambda_{a_1},...,\lambda_{a_n}}(X_s)\Gamma_s^{(n)}\right]
  \end{align}
  for all bounded $\F_t$-measurable functionals $F_t$.
\end{thm}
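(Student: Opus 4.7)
The plan is to imitate the proof of Theorem \ref{penal-1}, with the exponential clock $\bm{e}_q$ replaced by the one-point hitting clock $T_b$. First I would introduce
\[
M_t^{b,(n)}:=h^B(b)\,\P_x\!\left[\Gamma_{T_b}^{(n)}\,\middle|\,\F_t\right],\qquad N_t^{b,(n)}:=h^B(b)\,\Gamma_t^{(n)}\,\P_{X_t}\!\left[\Gamma_{T_b}^{(n)}\right].
\]
The process $(M_t^{b,(n)})_{t\ge 0}$ is a non-negative $(\F_t)$-martingale by construction. Applying the strong Markov property at time $t$ on $\{t<T_b\}$, and noting that $\Gamma_{T_b}^{(n)}\cdot 1_{\{T_b\le t\}}$ is $\F_t$-measurable (since $\F_{T_b}\subset\F_t$ on $\{T_b\le t\}$), I obtain the decomposition
\[
M_t^{b,(n)}=N_t^{b,(n)}\cdot 1_{\{t<T_b\}}+h^B(b)\,\Gamma_{T_b}^{(n)}\cdot 1_{\{T_b\le t\}}.
\]

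Next I invoke Proposition \ref{hit}: the pointwise-in-$y$ convergence $h^B(b)\P_y[\Gamma_{T_b}^{(n)}]\to\varphi_{A_n}^{(\pm 1),\lambda_{a_1},\ldots,\lambda_{a_n}}(y)$, evaluated at $y=X_t(\omega)$, yields $N_t^{b,(n)}\to\varphi_{A_n}^{(\pm 1),\lambda_{a_1},\ldots,\lambda_{a_n}}(X_t)\Gamma_t^{(n)}$ almost surely, while the stated $L^1(\P_x)$ convergence together with $\Gamma_t^{(n)}\le 1$ upgrades this to $L^1$ convergence. Since $T_b\to\infty$ a.s.\ by recurrence, one has $1_{\{t<T_b\}}\to 1$ almost surely, and hence the first term of the decomposition converges to $\varphi_{A_n}^{(\pm 1),\lambda_{a_1},\ldots,\lambda_{a_n}}(X_t)\Gamma_t^{(n)}$ in $L^1(\P_x)$.

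The hard part will be showing that the second term vanishes in $L^1$. Since $\Gamma_{T_b}^{(n)}\le 1$, this reduces to proving $h^B(b)\P_x(T_b\le t)\to 0$ as $b\to\pm\infty$, which is the hitting-time analogue of Tsukada's Lemma 15.5 ($qr_q(0)\to 0$) exploited in Theorem \ref{penal-1}. I would deduce this from the resolvent identity (\ref{A-1}), namely $\P_x(T_b\le t)\le e^{qt}r_q(b-x)/r_q(0)$ for every $q>0$, combined with a suitable tuning of $q$ against $|b|$ and the asymptotic balance between the decay of $r_q(b-x)$ and the growth of $h^B(b)$; alternatively, an excursion-theoretic argument works by noting that $T_b$ stochastically dominates the first instant at which the Poisson point process of excursions from $0$ emits an excursion hitting $b$, whose rate is $\bm{n}^0(T_b<\infty)=1/h^B(b)\to 0$. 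Granted this estimate, $M_t^{b,(n)}\to\varphi_{A_n}^{(\pm 1),\lambda_{a_1},\ldots,\lambda_{a_n}}(X_t)\Gamma_t^{(n)}$ in $L^1(\P_x)$, so the limit is a non-negative martingale as the $L^1$-limit of non-negative martingales (as in the proof of Theorem \ref{penal-1}), and the penalization identity follows from the tower property:
\[
h^B(b)\,\P_x[F_s\,\Gamma_{T_b}^{(n)}]=\P_x[F_s\,M_s^{b,(n)}]\longrightarrow \P_x\!\left[F_s\,\varphi_{A_n}^{(\pm 1),\lambda_{a_1},\ldots,\lambda_{a_n}}(X_s)\Gamma_s^{(n)}\right]
\]
for every bounded $\F_s$-measurable functional $F_s$.
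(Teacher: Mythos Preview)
Your overall strategy is precisely what the paper intends: it says the proof is ``almost the same as that of Theorem~\ref{penal-1}, based on Proposition~\ref{hit}'', and your definitions of $M_t^{b,(n)}$, $N_t^{b,(n)}$, the decomposition, the use of Proposition~\ref{hit} for the $L^1$-convergence of $N_t^{b,(n)}$, and the concluding martingale argument all mirror Theorem~\ref{penal-1} correctly.

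The one genuine soft spot is the step you flag as ``the hard part'', namely $h^B(b)\,\P_x(T_b\le t)\to 0$. Neither of your two sketches is complete as written. For the excursion argument, the rate $\bm{n}^0(T_b<\infty)=1/h^B(b)$ governs the first arrival in the \emph{local-time} parametrisation of the Poisson point process, not in real time; the event $\{T_b\le t\}$ is contained in $\{\sigma_b\le L_t^0\}$ where $\sigma_b\sim\mathrm{Exp}(1/h^B(b))$, but $\sigma_b$ and $L_t^0$ are dependent, and any splitting such as $\P_0(\sigma_b\le M)+\P_0(L_t^0>M)$ leaves a term $h^B(b)\P_0(L_t^0>M)$ that diverges because the law of $L_t^0$ does not depend on $b$. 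So stochastic domination alone does not yield the $o(1/h^B(b))$ decay you need. For the resolvent route, the Chebyshev bound $\P_x(T_b\le t)\le e^{qt}r_q(b-x)/r_q(0)$ reduces the question to $h^B(b)\,r_q(b-x)\to 0$ for fixed $q>0$; this is true in the standard examples (Brownian, stable), but it is not an immediate consequence of condition~(\textbf{A}) and needs an argument.

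The paper itself suppresses this detail, relying implicitly on the hitting-time penalization already carried out in Takeda--Yano~\cite{TY} and Iba--Yano~\cite{IY-3}; the precise estimate you need (the hitting-time analogue of $q\,r_q(0)\to 0$) is established there. So your plan is correct in shape, but to close the gap you should either cite the relevant lemma from \cite{TY}/\cite{IY-3} or supply a self-contained proof of $h^B(b)\,\P_x(T_b\le t)\to 0$ rather than leaving it at the heuristic level.
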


The proof is almost the same as that of Theorem \ref{penal-1}, based on Proposition \ref{hit} and so we omit it.


\section{Two-point hitting time clock}
\label{Sec5}
\subsection{Expectation}
Let us calculate the expectation $\P_x\left[\Gamma_{T_c\wedge T_{-d}}^{(n)}\right]$. We have
\begin{align*}
  &\P_x\left[\Gamma_{T_c}^{(n)},\ T_{c}<T_{-d}\right]\\
  &\qquad =\P_x\left[\Gamma_{T_c}^{(n)}\right]-\P_x\left[\Gamma_{T_c}^{(n)},\ T_{-d}<T_{c}\right]\\
  &\qquad =\P_x\left[\Gamma_{T_c}^{(n)}\right]-\P_x\left[\Gamma_{T_{-d}}^{(n)},\ T_{-d}<T_{c}\right]\P_{-d}\left[\Gamma_{T_c}^{(n)}\right]\\
  &\qquad =\P_x\left[\Gamma_{T_c}^{(n)}\right]-\left(\P_x\left[\Gamma_{T_{-d}}^{(n)}\right]-\P_x\left[\Gamma_{T_{-d}}^{(n)},\ T_{c}<T_{-d}\right]\right)\P_{-d}\left[\Gamma_{T_c}^{(n)}\right]\\
    &\qquad =\P_x\left[\Gamma_{T_c}^{(n)}\right]-\left(\P_x\left[\Gamma_{T_{-d}}^{(n)}\right]-\P_x\left[\Gamma_{T_c}^{(n)},\ T_{c}<T_{-d}\right]\P_c\left[\Gamma_{T_{-d}}^{(n)}\right]\right)\P_{-d}\left[\Gamma_{T_c}^{(n)}\right].
     \stepcounter{equation}\tag{\theequation}
\end{align*}
Thus, we obtain
\begin{align}
   \P_x\left[\Gamma_{T_c}^{(n)},\ T_{c}<T_{-d}\right]=\frac{\P_x\left[\Gamma_{T_c}^{(n)}\right]-\P_x\left[\Gamma_{T_{-d}}^{(n)}\right]\P_{-d}\left[\Gamma_{T_c}^{(n)}\right]}{1-\P_c\left[\Gamma_{T_{-d}}^{(n)}\right]\P_{-d}\left[\Gamma_{T_c}^{(n)}\right]}.
\end{align}
Similarly, we have
\begin{align}
   \P_x\left[\Gamma_{T_{-d}}^{(n)},\ T_{-d}<T_{c}\right]=\frac{\P_x\left[\Gamma_{T_{-d}}^{(n)}\right]-\P_x\left[\Gamma_{T_{c}}^{(n)}\right]\P_{c}\left[\Gamma_{T_{-d}}^{(n)}\right]}{1-\P_{-d}\left[\Gamma_{T_{c}}^{(n)}\right]\P_{c}\left[\Gamma_{T_{-d}}^{(n)}\right]}.
\end{align}
Therefore, since
\begin{align}
  \P_x\left[\Gamma_{T_c\wedge T_{-d}}^{(n)}\right]=\P_x\left[\Gamma_{T_c}^{(n)},\ T_{c}<T_{-d}\right]+\P_x\left[\Gamma_{T_{-d}}^{(n)},\ T_{-d}<T_{c}\right],
\end{align}
the expectation $\P_x\left[\Gamma_{T_c\wedge T_{-d}}^{(n)}\right]$ has been computed.

\subsection{Convergence}
Let us find the limit of $h^C(c,-d)\P_x\left[\Gamma_{T_c\wedge T_{-d}}^{(n)}\right]$ as $(c,-d)\stackrel{(\gamma)}{\to} \pm \infty.$\\

\begin{prop}
\label{two-hit}
  It holds that
  \begin{align}
    \lim_{(c,d)\stackrel{(\gamma)}{\to}\infty}h^C(c,-d)\P_x\left[\Gamma_{T_c\wedge T_{-d}}^{(n)}\right]&=\varphi_{A_n}^{(\gamma),\lambda_{a_1},...,\lambda_{a_n}}(x),\\
    \lim_{(c,d)\stackrel{(\gamma)}{\to}\infty}h^C(c,-d)\P_{X_t}\left[\Gamma_{T_c\wedge T_{-d}}^{(n)}\right]&=\varphi_{A_n}^{(\gamma),\lambda_{a_1},...,\lambda_{a_n}}(X_t)\qquad \text{in}\ L^1(\P_x).
  \end{align}
\end{prop}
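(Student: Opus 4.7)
The plan is to mirror the computations in Sections \ref{Sec3} and \ref{Sec4} rather than to work from the closed-form $A+B$ expression just derived, because the denominator $1-\P_c[\Gamma_{T_{-d}}^{(n)}]\P_{-d}[\Gamma_{T_c}^{(n)}]$ intermixes the asymptotics coming from both sides and would have to be pulled apart again. Decomposing instead on the first hit of $A_n\cup\{c,-d\}$ and applying the strong Markov property at each $T_{a_k}$, I would rewrite
\begin{equation*}
  \P_x[\Gamma_{T_c\wedge T_{-d}}^{(n)}] = \P_x(T_c\wedge T_{-d}<T_{A_n}) + \sum_{k=1}^{n}\P_x(T_{a_k}=T_{A_n\cup\{c,-d\}})\,A_k^{c,d},
\end{equation*}
where $A_k^{c,d}:=\P_{a_k}[\Gamma_{T_c\wedge T_{-d}}^{(n)}]$ satisfies the linear system $\bm a_n^{c,d}=\mathbb J_n^{c,d}\bm a_n^{c,d}+\bm i_n^{c,d}$ with
\begin{equation*}
  J_{k,i}^{c,d}:=\P_{a_k}\!\left[e^{-\lambda_{a_k}L_{T_{a_i}}^{a_k}},\ T_{a_i}=T_{A_n\setminus\{a_k\}}\wedge T_c\wedge T_{-d}\right],
\end{equation*}
\begin{equation*}
  I_k^{c,d}:=\P_{a_k}\!\left[e^{-\lambda_{a_k}L_{T_c\wedge T_{-d}}^{a_k}},\ T_c\wedge T_{-d}<T_{A_n\setminus\{a_k\}}\right].
\end{equation*}
Invertibility of $\mathbb E_n-\mathbb J_n^{c,d}$ follows from the same \Levy--Desplanques argument as in Lemma \ref{coeff}.

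For the individual limits, bounded convergence gives $J_{k,i}^{c,d}\to J_{k,i}$ and $\P_x(T_{a_k}=T_{A_n\cup\{c,-d\}})\to \P_x(T_{a_k}=T_{A_n})=(\bm p_n(x))_k$. The hitting-probability asymptotic
\begin{equation*}
  \lim_{(c,d)\stackrel{(\gamma)}{\to}\infty} h^C(c,-d)\,\P_x(T_c\wedge T_{-d}<T_{A_n}) = \varphi_{A_n}^{(\gamma)}(x)
\end{equation*}
is the two-point analog of the inputs used in Propositions \ref{exp-conv} and \ref{hit}, provided by Propositions 5.1--5.2 of Iba \cite{Iba}. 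The remaining ingredient is $h^C(c,-d)I_k^{c,d}$. Expanding the indicator,
\begin{equation*}
  I_k^{c,d}=\P_{a_k}\!\left[e^{-\lambda_{a_k}L_{T_c\wedge T_{-d}}^{a_k}}\right]-\sum_{i\ne k}J_{k,i}^{c,d}\,\P_{a_i}\!\left[e^{-\lambda_{a_k}L_{T_c\wedge T_{-d}}^{a_k}}\right],
\end{equation*}
so the limit reduces to a two-point analog of Iba--Yano \cite{IY-3} (4.17), namely
\begin{equation*}
  \lim_{(c,d)\stackrel{(\gamma)}{\to}\infty} h^C(c,-d)\,\P_a\!\left[e^{-\lambda L_{T_c\wedge T_{-d}}^{e}}\right]=\frac{1}{\lambda}+h^{(\gamma)}(a-e).
\end{equation*}
Granted this, $h^C(c,-d)I_k^{c,d}\to I_k^{(\gamma)}$, and multiplying the decomposition by $h^C(c,-d)$ together with $\bm a_n^{c,d}=(\mathbb E_n-\mathbb J_n^{c,d})^{-1}\bm i_n^{c,d}$ assembles to the claimed limit $\varphi_{A_n}^{(\gamma),\lambda_{a_1},\dots,\lambda_{a_n}}(x)$.

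The $L^1(\P_x)$ statement follows exactly as in Proposition \ref{hit}: the pointwise convergence just established, combined with the $L^1$ analog of the hitting-probability limit from Iba \cite{Iba} (giving a uniform $L^1$ dominator since $0\le\Gamma^{(n)}\le 1$), and the generalized dominated convergence theorem. The main obstacle is the two-point asymptotic displayed above, which is the only input not already packaged in the excerpt. The one-point version in \cite{IY-3} rests on the explicit formula (\ref{P7}); lacking a comparable closed form, I would derive the two-point analog by a compensation argument on the excursions of $X$ away from $e$, using (\ref{A-2}), (\ref{nMP}), and (\ref{P1})--(\ref{P2}), together with the definition of $h^{(\gamma)}$ in (\ref{hgam}) to track the $\gamma$-dependence that enters through the relative rates at which $c$ and $-d$ tend to $\pm\infty$.
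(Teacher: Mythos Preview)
Your approach is correct in outline but takes a genuinely different and longer route than the paper. You explicitly reject the $A+B$ decomposition from Section \ref{Sec5}.1 because you worry the denominator $1-\P_c[\Gamma_{T_{-d}}^{(n)}]\P_{-d}[\Gamma_{T_c}^{(n)}]$ intermixes the two asymptotics; the paper shows this concern is unfounded. In fact $\P_{-d}[\Gamma_{T_c}^{(n)}]\to 0$ as $(c,d)\stackrel{(\gamma)}{\to}\infty$ (bounding it by $\P_{-d}(T_c<T_{a_1})\to 0$, from \cite{IY-3}), so the denominator tends to $1$ and the cross term in the numerator vanishes as well. Writing $h^C(c,-d)=\tfrac{h^C(c,-d)}{h^B(c)}\cdot h^B(c)$ and invoking Proposition \ref{hit} together with $h^C(c,-d)/h^B(c)\to(1+\gamma)/2$, the $T_c<T_{-d}$ piece converges to $\tfrac{1+\gamma}{2}\varphi_{A_n}^{(+1),\lambda_{a_1},\dots,\lambda_{a_n}}(x)$; by symmetry the other piece gives $\tfrac{1-\gamma}{2}\varphi_{A_n}^{(-1),\lambda_{a_1},\dots,\lambda_{a_n}}(x)$, and their sum is $\varphi_{A_n}^{(\gamma),\lambda_{a_1},\dots,\lambda_{a_n}}(x)$ by affinity in $\gamma$ of each ingredient in (\ref{func-1}).

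Your route would also succeed, but the price is exactly the extra two-point asymptotic $h^C(c,-d)\,\P_a[e^{-\lambda L_{T_c\wedge T_{-d}}^{e}}]\to \tfrac{1}{\lambda}+h^{(\gamma)}(a-e)$ that you flag as the main obstacle. The paper sidesteps this entirely by bootstrapping from the already-proven one-point case (Proposition \ref{hit}), so no new excursion computation is needed. In short: the paper leverages the $A+B$ formula to reduce the two-point clock to two one-point clocks, while you rebuild the Section \ref{Sec4} machinery at the two-point level; both are valid, but the paper's reduction is shorter and reuses existing results.
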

\begin{proof}
  By p.202 of Iba-Yano \cite{IY-3}, we know that
\begin{align}
  \lim_{(c,d)\stackrel{(\gamma)}{\to}\infty}\P_{-d}(T_c<T_{a})=0\qquad \text{for}\ a\in \R.
\end{align}
  Since 
    \begin{align*}
  0&\le\lim_{(c,d)\stackrel{(\gamma)}{\to}\infty}\P_{-d}\left[\Gamma_{T_c}^{(n)}\right]\\
  &=\lim_{(c,d)\stackrel{(\gamma)}{\to}\infty}\left(\P_{-d}(T_c<T_{A_n})+\sum_{k=1}^n\P_{-d}(T_{a_k}=T_{A_n}\wedge T_c)\P_{a_k}\left[\Gamma_{T_c}^{(n)}\right]\right)\\
  &=\lim_{(c,d)\stackrel{(\gamma)}{\to}\infty}\P_{-d}(T_c<T_{A_n})\\
  &\le \lim_{(c,d)\stackrel{(\gamma)}{\to}\infty}\P_{-d}(T_c<T_{a_1})\\
  &=0,
  \stepcounter{equation}\tag{\theequation}
\end{align*}
we have
\begin{align*}
  &\lim_{(c,d)\stackrel{(\gamma)}{\to}\infty}h^C(c,-d)\P_x\left[\Gamma_{T_c}^{(n)},\ T_{c}<T_{-d}\right]\\
  &\qquad =\lim_{(c,d)\stackrel{(\gamma)}{\to}\infty}\frac{\frac{h^C(c,-d)}{h^B(c)}\cdot h^B(c)\P_x\left[\Gamma_{T_c}^{(n)}\right]-\frac{h^C(c,-d)}{h^B(-d)}\cdot h^B(-d)\P_x\left[\Gamma_{T_{-d}}^{(n)}\right]\P_{-d}\left[\Gamma_{T_c}^{(n)}\right]}{1-\P_c\left[\Gamma_{T_{-d}}^{(n)}\right]\P_{-d}\left[\Gamma_{T_c}^{(n)}\right]}\\
    &\qquad =\lim_{(c,d)\stackrel{(\gamma)}{\to}\infty}\frac{h^C(c,-d)}{h^B(c)}\cdot h^B(c)\P_x\left[\Gamma_{T_c}^{(n)}\right]\\
  &\qquad =\frac{1+\gamma}{2}\cdot \varphi_{A_n}^{(+ 1),\lambda_{a_1},...,\lambda_{a_n}}(x).
  \stepcounter{equation}\tag{\theequation}
\end{align*}
Similarly, we have
\begin{align}
  \lim_{(c,d)\stackrel{(\gamma)}{\to}\infty}h^C(c,-d)\P_x\left[\Gamma_{T_{-d}}^{(n)},\ T_{-d}<T_{c}\right]=\frac{1-\gamma}{2}\cdot \varphi_{A_n}^{(-1),\lambda_{a_1},...,\lambda_{a_n}}(x).
\end{align}
Therefore, we obtain
\begin{align*}
  &\lim_{(c,d)\stackrel{(\gamma)}{\to}\infty}h^C(c,-d)\P_x\left[\Gamma_{T_{-d}}^{(n)}\right]\\
  &\qquad =\lim_{(c,d)\stackrel{(\gamma)}{\to}\infty}h^C(c,-d)\P_x\left[\Gamma_{T_c}^{(n)},\ T_{c}<T_{-d}\right]+\lim_{(c,d)\stackrel{(\gamma)}{\to}\infty}h^C(c,-d)\P_x\left[\Gamma_{T_{-d}}^{(n)},\ T_{-d}<T_{c}\right]\\
  &\qquad =\frac{1+\gamma}{2}\cdot \varphi_{A_n}^{(+ 1),\lambda_{a_1},...,\lambda_{a_n}}(x)+\frac{1-\gamma}{2}\cdot \varphi_{A_n}^{(-1),\lambda_{a_1},...,\lambda_{a_n}}(x)\\
  &\qquad =\varphi_{A_n}^{(\gamma),\lambda_{a_1},...,\lambda_{a_n}}(x).
  \stepcounter{equation}\tag{\theequation}
\end{align*}
Similarly, we also obtain $L^1$-convergence. Therefore, the proof is complete.
\end{proof}

\subsection{Penalization}
\begin{thm}
\label{penal-3}
  The process $\left(\varphi_{A_n}^{(\gamma),\lambda_{a_1},...,\lambda_{a_k}}(X_t)\Gamma_t^{(n)}\right)_{t\ge 0}$ is a martingale. Moreover, it holds that
  \begin{align}
    \lim_{(c,d)\stackrel{(\gamma)}{\to}\infty}\P_x\left[F_s\cdot \Gamma_{T_{c}\wedge T_{-d}}^{(n)}\right]=\P_x\left[F_s\cdot \varphi_{A_n}^{(\gamma),\lambda_{a_1},...,\lambda_{a_n}}(X_s)\Gamma_s^{(n)}\right]
  \end{align}
  for all bounded $\F_t$-measurable functionals $F_t$.
\end{thm}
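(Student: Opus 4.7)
The plan is to follow the template of the proof of Theorem \ref{penal-1} line by line, replacing the exponential clock $\bm{e}_q$ by the two-point hitting time $T_c\wedge T_{-d}$ and invoking Proposition \ref{two-hit} in place of Proposition \ref{exp-conv}. For each fixed $c,d>0$, I would introduce the non-negative $(\F_t)$-martingale and its truncated approximation
\begin{align*}
M_t^{(c,d),(n)} &:= h^C(c,-d)\,\P_x[\Gamma_{T_c\wedge T_{-d}}^{(n)}\mid \F_t],\\
N_t^{(c,d),(n)} &:= h^C(c,-d)\,\P_x[\Gamma_{T_c\wedge T_{-d}}^{(n)};\,t<T_c\wedge T_{-d}\mid \F_t].
\end{align*}
Applying the Markov property at time $t$ on the event $\{t<T_c\wedge T_{-d}\}$ yields the factorization $N_t^{(c,d),(n)} = h^C(c,-d)\,1_{\{t<T_c\wedge T_{-d}\}}\,\Gamma_t^{(n)}\,\P_{X_t}[\Gamma_{T_c\wedge T_{-d}}^{(n)}]$, which is the hitting-time analog of the use of the lack-of-memory property in Theorem \ref{penal-1}.

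Next, I would show that $N_t^{(c,d),(n)}\to \varphi_{A_n}^{(\gamma),\lambda_{a_1},\ldots,\lambda_{a_n}}(X_t)\Gamma_t^{(n)}$ almost surely and in $L^1(\P_x)$. By Proposition \ref{two-hit}, the factor $h^C(c,-d)\P_{X_t}[\Gamma_{T_c\wedge T_{-d}}^{(n)}]$ converges to $\varphi_{A_n}^{(\gamma),\lambda_{a_1},\ldots,\lambda_{a_n}}(X_t)$ in $L^1(\P_x)$; the indicator $1_{\{t<T_c\wedge T_{-d}\}}$ tends to $1$ almost surely because any \cadlag\ path is bounded on $[0,t]$ while both $c,d\to\infty$; and the factor $\Gamma_t^{(n)}$ is uniformly bounded by $1$. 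Combining these three ingredients gives the stated convergence.

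The main difficulty lies in handling the remainder $M_t^{(c,d),(n)}-N_t^{(c,d),(n)} = h^C(c,-d)\,\Gamma_{T_c\wedge T_{-d}}^{(n)}\,1_{\{T_c\wedge T_{-d}\le t\}}$, which is the analog of the term bounded by $q r_q(0)\cdot t$ in the proof of Theorem \ref{penal-1}. Pathwise this expression tends to $0$ (since $T_c\wedge T_{-d}\to\infty$ a.s.), but unlike the exponential case the bound is random rather than deterministic, so an additional estimate is needed for $L^1$ convergence. I would reduce the problem to the inequality $h^C(c,-d)\,\P_x(T_c\wedge T_{-d}\le t)\to 0$, and then, using $\P_x(T_c\wedge T_{-d}\le t)\le \P_x(T_c\le t)+\P_x(T_{-d}\le t)$ together with the asymptotic ratios $h^C(c,-d)/h^B(c)\to(1+\gamma)/2$ and $h^C(c,-d)/h^B(-d)\to(1-\gamma)/2$ already exploited in Proposition \ref{two-hit}, reduce further to the one-point fact $h^B(b)\P_x(T_b\le t)\to 0$ as $b\to\pm\infty$. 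This last statement I would derive from the Markov-type inequality $\P_x(T_b\le t)\le e^{qt}\,r_q(b-x)/r_q(0)$ (valid for any fixed $q>0$) combined with the decay of $r_q$ at infinity, in the spirit of Lemma 15.5 of Tsukada \cite{Tukada}.

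Once the $L^1$ convergence $M_t^{(c,d),(n)}\to \varphi_{A_n}^{(\gamma),\lambda_{a_1},\ldots,\lambda_{a_n}}(X_t)\Gamma_t^{(n)}$ is in hand, the martingale assertion follows because the $L^1$-limit of non-negative martingales is itself a martingale (Proposition 1.3 of \cite{CW}). The penalization identity is then obtained from the tower property: for any bounded $\F_s$-measurable $F_s$,
\begin{align*}
h^C(c,-d)\,\P_x[F_s\cdot \Gamma_{T_c\wedge T_{-d}}^{(n)}] = \P_x[F_s\cdot M_s^{(c,d),(n)}] \longrightarrow \P_x[F_s\cdot \varphi_{A_n}^{(\gamma),\lambda_{a_1},\ldots,\lambda_{a_n}}(X_s)\Gamma_s^{(n)}]
\end{align*}
as $(c,d)\stackrel{(\gamma)}{\to}\infty$, completing the proof.
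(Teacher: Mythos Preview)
Your template mirrors the paper's intent exactly: the paper omits the argument, saying it is ``almost the same as that of Theorem \ref{penal-1}, based on Proposition \ref{two-hit}'', and your decomposition into $M_t^{(c,d),(n)}$ and $N_t^{(c,d),(n)}$, the Markov-property factorization of $N_t$, the $L^1$ convergence of $N_t$ via Proposition \ref{two-hit}, and the concluding martingale and tower-property steps are all correct.

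The weak point is your control of the remainder $M_t^{(c,d),(n)}-N_t^{(c,d),(n)}$. You reduce to $h^C(c,-d)\P_x(T_c\wedge T_{-d}\le t)\to 0$, then to the one-point statement $h^B(b)\P_x(T_b\le t)\to 0$, and finally to $h^B(b)\,r_q(b-x)\to 0$ for fixed $q>0$, invoking ``the spirit of Lemma 15.5 of Tsukada \cite{Tukada}''. That lemma, however, is the unrelated fact $\lim_{q\to 0+}q\,r_q(0)=0$; it says nothing about the product $h^B(b)\,r_q(b-x)$ as $b\to\pm\infty$ with $q$ fixed. Under condition (\ref{A}) alone, Riemann--Lebesgue gives $r_q(b-x)\to 0$ but with no rate, while $h^B(b)\to\infty$; the product limit you need is therefore not justified by anything assembled in the paper, and for L\'evy processes with heavy-tailed jumps it is not clear the Markov-inequality route can be pushed through.

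For Theorem \ref{penal-3} specifically there is a cheaper route that sidesteps this estimate. The last display in the proof of Proposition \ref{two-hit} gives
\[
\varphi_{A_n}^{(\gamma),\lambda_{a_1},\ldots,\lambda_{a_n}}
=\tfrac{1+\gamma}{2}\,\varphi_{A_n}^{(+1),\lambda_{a_1},\ldots,\lambda_{a_n}}
+\tfrac{1-\gamma}{2}\,\varphi_{A_n}^{(-1),\lambda_{a_1},\ldots,\lambda_{a_n}},
\]
so the martingale assertion follows by linearity from Theorem \ref{penal-2}, which is already proved. With the martingale property in hand, the remainder is handled by taking expectations:
\[
\P_x\bigl[M_t^{(c,d),(n)}-N_t^{(c,d),(n)}\bigr]=M_0^{(c,d),(n)}-\P_x\bigl[N_t^{(c,d),(n)}\bigr]
\longrightarrow \varphi_{A_n}^{(\gamma),\lambda_{a_1},\ldots,\lambda_{a_n}}(x)-\P_x\bigl[\varphi_{A_n}^{(\gamma),\lambda_{a_1},\ldots,\lambda_{a_n}}(X_t)\Gamma_t^{(n)}\bigr]=0,
\]
and since $M_t^{(c,d),(n)}-N_t^{(c,d),(n)}\ge 0$ this forces $M_t^{(c,d),(n)}-N_t^{(c,d),(n)}\to 0$ in $L^1(\P_x)$, after which your final paragraph goes through unchanged.
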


The proof is almost the same as that of Theorem \ref{penal-1}, based on Proposition \ref{two-hit} and so we omit it.


\section{Inverse local time clock}
\label{Sec6}
\subsection{Convergence}
\begin{prop}
\label{inv}
  It holds that
  \begin{align}
    \lim_{b\to \pm \infty}h^B(b)\P_{x}\left[\Gamma_{\eta_u^b}^{(n)}\right]&=\varphi_{A_n}^{(\pm 1),\lambda_{a_1},...,\lambda_{a_n}}(x),\\
  \lim_{b\to \pm \infty}h^B(b)\P_{X_t}\left[\Gamma_{\eta_u^b}^{(n)}\right]&=\varphi_{A_n}^{(\pm 1),\lambda_{a_1},...,\lambda_{a_n}}(X_t)\qquad \text{in}\ L^1(\P_x).
  \end{align}
\end{prop}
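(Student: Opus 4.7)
The plan is to reduce this proposition to Proposition \ref{hit} by splitting the additive functional $\Gamma^{(n)}$ at the hitting time $T_b$ and then running excursion theory on the remaining interval $[T_b,\eta_u^b]$. For $|b|$ large, $b\notin A_n$, so $L_{T_b}^b=0$ and therefore $\eta_u^b=T_b+\eta_u^b\circ\theta_{T_b}$, while for each $k$ the local time splits as $L_{\eta_u^b}^{a_k}=L_{T_b}^{a_k}+L_{\eta_u^b}^{a_k}\circ\theta_{T_b}$. The strong Markov property at $T_b$ then gives
\begin{align*}
\P_x\bigl[\Gamma_{\eta_u^b}^{(n)}\bigr]=\P_x\bigl[\Gamma_{T_b}^{(n)}\bigr]\cdot\P_b\bigl[\Gamma_{\eta_u^b}^{(n)}\bigr],
\end{align*}
since the last factor is deterministic.

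Next I would evaluate $\P_b\bigl[\Gamma_{\eta_u^b}^{(n)}\bigr]$ by excursion theory. Under $\P_b$, the path on $[0,\eta_u^b]$ is the concatenation of the excursions $(\epsilon_l^b)_{l\le u}$ away from $b$, hence $L_{\eta_u^b}^{a_k}=\sum_{l\le u}L_\zeta^{a_k}(\epsilon_l^b)$, where $L_\zeta^{a_k}(e)$ denotes the local time at $a_k$ accumulated by a generic excursion $e\in\mathscr{D}^b$. The exponential formula for the Poisson point process with characteristic measure $\bm{n}^b$ then yields
\begin{align*}
\P_b\bigl[\Gamma_{\eta_u^b}^{(n)}\bigr]=\exp(-u\kappa^b),\qquad \kappa^b:=\bm{n}^b\Bigl[1-\exp\Bigl(-\textstyle\sum_{k=1}^n\lambda_{a_k}L_\zeta^{a_k}\Bigr)\Bigr].
\end{align*}
Since the integrand vanishes on $\{T_{A_n}=\infty\}$ and is bounded by $1$,
\begin{align*}
0\le \kappa^b\le \bm{n}^b(T_{A_n}<\infty)\le\sum_{i=1}^n\bm{n}^b(T_{a_i}<\infty)=\sum_{i=1}^n\frac{1}{h^B(a_i-b)},
\end{align*}
by (\ref{P1}), and the right-hand side tends to $0$ as $b\to\pm\infty$ by the standard divergence $h^B(c)\to\infty$ as $|c|\to\infty$ for recurrent \Levy\ processes under (\ref{A}). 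Hence $e^{-u\kappa^b}\to 1$, and combining with Proposition \ref{hit} gives
\begin{align*}
h^B(b)\P_x\bigl[\Gamma_{\eta_u^b}^{(n)}\bigr]=e^{-u\kappa^b}\cdot h^B(b)\P_x\bigl[\Gamma_{T_b}^{(n)}\bigr]\longrightarrow\varphi_{A_n}^{(\pm 1),\lambda_{a_1},\ldots,\lambda_{a_n}}(x).
\end{align*}

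For the $L^1(\P_x)$ convergence, I use that $\kappa^b$ does not depend on the starting point, so the factorization holds with $X_t$ in place of $x$. A triangle-inequality split
\begin{align*}
\bigl\|h^B(b)\P_{X_t}\bigl[\Gamma_{\eta_u^b}^{(n)}\bigr]-\varphi_{A_n}^{(\pm 1),\lambda_{a_1},\ldots,\lambda_{a_n}}(X_t)\bigr\|_{L^1}&\le e^{-u\kappa^b}\bigl\|h^B(b)\P_{X_t}\bigl[\Gamma_{T_b}^{(n)}\bigr]-\varphi_{A_n}^{(\pm 1),\lambda_{a_1},\ldots,\lambda_{a_n}}(X_t)\bigr\|_{L^1}\\
&\quad+(1-e^{-u\kappa^b})\bigl\|\varphi_{A_n}^{(\pm 1),\lambda_{a_1},\ldots,\lambda_{a_n}}(X_t)\bigr\|_{L^1}
\end{align*}
then vanishes in the limit by the $L^1$ convergence in Proposition \ref{hit} and by $\kappa^b\to 0$. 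The only technical point in the whole argument is the excursion-theoretic identity for $\P_b\bigl[\Gamma_{\eta_u^b}^{(n)}\bigr]$; this is an immediate application of the compensation formula entirely parallel to the computation leading to (\ref{I-3}) in Lemma \ref{exp-lem}, so I expect no essential obstacle.
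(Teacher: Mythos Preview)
Your proposal is correct and follows essentially the same route as the paper: factor $\P_x[\Gamma_{\eta_u^b}^{(n)}]=\P_x[\Gamma_{T_b}^{(n)}]\cdot\P_b[\Gamma_{\eta_u^b}^{(n)}]$ via the strong Markov property, evaluate the second factor by the exponential (Campbell) formula for the excursion point process, bound $\kappa^b$ by $\sum_i 1/h^B(a_i-b)\to 0$, and invoke Proposition~\ref{hit}. Your $L^1$ argument is in fact spelled out more explicitly than the paper's ``similarly''; the only minor quibble is that the identity for $\P_b[\Gamma_{\eta_u^b}^{(n)}]$ is really the exponential/Campbell formula (as in (\ref{P4})) rather than the more elaborate compensation-formula computation around (\ref{I-3}).
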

\begin{proof}
  By the strong Markov property, we have
  \begin{align}
\P_x\left[\Gamma_{\eta_u^b}^{(n)}\right]=\P_x\left[\Gamma_{T_b}^{(n)}\right]\P_b\left[\Gamma_{\eta_u^b}^{(n)}\right].
\end{align}
We now calculate the limit of $\P_b\left[\Gamma_{\eta_u^b}^{(n)}\right]$ as $b\to \pm \infty.$ By Campbell formula (see, e.g., Theorem 2.7 of \cite{Kyp}), we have
\begin{align*}
\label{P4}
  \P_b\left[\Gamma_{\eta_u^b}^{(n)}\right]&=\P_b\left[\exp \left(-\sum_{v\le u}\left(\lambda_{a_1}L_{T_b}^{a_1}(\epsilon_v^b)+\cdots +\lambda_b L_{T_b}^b (\epsilon_v^b\right)\right)\right]\\
  &=\exp \left\{-\int_{(0,u]\times \mathscr{D}^b}\left(1-e^{-\lambda_{a_1}L_{T_b}^{a_n}(e)-\cdots -\lambda_{a_n}L_{T_b}^{a_n}(e)}\right)dt\otimes \bm{n}^{b}(de)\right\}\\
  &=\exp \left\{-u \bm{n}^b\left[1-\Gamma_{T_b}^{(n)}\right]\right\}.
  \stepcounter{equation}\tag{\theequation}
\end{align*}
Since by Lemma 3.7 of Takeda-Yano \cite{TY},
\begin{align*}
   0&\le \bm{n}^b\left[1-\Gamma_{T_b}^{(n)}\right]\\
   &=\sum_{i=1}^n\bm{n}^b\left[1-\Gamma_{T_b}^{(n)},\ T_{a_i}=T_{A_n}\wedge T_b\right]\\
   & \le 2\sum_{i=1}^n\bm{n}^b(T_{a_i}<T_b)\\
   & =2\sum_{i=1}^n \frac{1}{h^B(a_i-b)}\to 0
  \stepcounter{equation}\tag{\theequation}
\end{align*}
as $b\to \pm \infty$, we have
\begin{align}
  \lim_{b\to \pm \infty}\P_b\left[\Gamma_{\eta_u^b}^{(n)}\right]=1.
\end{align}
Thus, we obtain
\begin{align*}
\lim_{b\to \pm \infty}h^B(b)\P_x\left[\Gamma_{\eta_u^b}^{(n)}\right]&=\lim_{b\to \pm \infty}h^B(b)\P_x\left[\Gamma_{T_b}^{(n)}\right]\P_b\left[\Gamma_{\eta_u^b}^{(n)}\right]\\
&=\varphi_{A_n}^{(\pm 1),\lambda_{a_1},...,\lambda_{a_n}}(x).
\stepcounter{equation}\tag{\theequation}
\end{align*}
Similarly, we also obtain $L^1$-convergence. Therefore, the proof is complete.
\end{proof}

\subsection{Penalization}
\begin{thm}
\label{penal-4}
  It holds that
  \begin{align}
    \lim_{b\to \pm \infty}\P_x\left[F_s\cdot \Gamma_{\eta_u^b}^{(n)}\right]=\P_x\left[F_s\cdot \varphi_{A_n}^{(\pm 1),\lambda_{a_1},...,\lambda_{a_n}}(X_s)\Gamma_s^{(n)}\right]
  \end{align}
  for all bounded $\F_t$-measurable functionals $F_t$.
\end{thm}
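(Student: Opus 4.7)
The plan is to mirror the argument of Theorem \ref{penal-1}, with Proposition \ref{inv} replacing Proposition \ref{exp-conv} and the strong Markov property at $s$ replacing the memoryless property of the exponential clock. The subtlety is that $\eta_u^b$ is not memoryless: to peel off the path before time $s$, I restrict to the event $\{s<T_b\}$, on which $L_s^b=0$, so that $\eta_u^b-s$ coincides with the inverse local time at $b$ of the shifted process $(X_{s+\cdot})$. The statement is understood with the normalizing factor $h^B(b)$ on the left-hand side, consistent with the proofs of Theorems \ref{penal-1}--\ref{penal-3}.

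Concretely, I would introduce
\begin{align}
  N_s^{b,(n)}&:=h^B(b)\P_x\left[\Gamma_{\eta_u^b}^{(n)}\mathbf{1}_{\{s<T_b\}}|\F_s\right],\\
  M_s^{b,(n)}&:=h^B(b)\P_x\left[\Gamma_{\eta_u^b}^{(n)}|\F_s\right],
\end{align}
observing that $(M_s^{b,(n)})_{s\ge 0}$ is a nonnegative $(\F_s)$-martingale by construction. Applying the strong Markov property at $s$ on $\{s<T_b\}$, where $\Gamma_{\eta_u^b}^{(n)}=\Gamma_s^{(n)}\cdot\Gamma_{\eta_u^b(X_{s+\cdot})}^{(n)}(X_{s+\cdot})$, I obtain
\begin{align}
  N_s^{b,(n)}=h^B(b)\Gamma_s^{(n)}\mathbf{1}_{\{s<T_b\}}\P_{X_s}\left[\Gamma_{\eta_u^b}^{(n)}\right].
\end{align}
Since $T_b\to\infty$ $\P_x$-a.s.\ and Proposition \ref{inv} yields $h^B(b)\P_{X_s}[\Gamma_{\eta_u^b}^{(n)}]\to\varphi_{A_n}^{(\pm 1),\lambda_{a_1},...,\lambda_{a_n}}(X_s)$ in $L^1(\P_x)$, I conclude $N_s^{b,(n)}\to\varphi_{A_n}^{(\pm 1),\lambda_{a_1},...,\lambda_{a_n}}(X_s)\Gamma_s^{(n)}$ in $L^1(\P_x)$.

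To upgrade this to convergence of $M_s^{b,(n)}$, I would exploit the nonnegative discrepancy $M_s^{b,(n)}-N_s^{b,(n)}$. Taking $\P_x$-expectation, Proposition \ref{inv} at the starting point $x$ gives $\P_x[M_s^{b,(n)}]=h^B(b)\P_x[\Gamma_{\eta_u^b}^{(n)}]\to\varphi_{A_n}^{(\pm 1),\lambda_{a_1},...,\lambda_{a_n}}(x)$, while the martingale property from Theorem \ref{penal-2} yields $\P_x[\varphi_{A_n}^{(\pm 1),\lambda_{a_1},...,\lambda_{a_n}}(X_s)\Gamma_s^{(n)}]=\varphi_{A_n}^{(\pm 1),\lambda_{a_1},...,\lambda_{a_n}}(x)$. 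Hence $\P_x[M_s^{b,(n)}-N_s^{b,(n)}]\to 0$, and nonnegativity promotes this to $M_s^{b,(n)}\to\varphi_{A_n}^{(\pm 1),\lambda_{a_1},...,\lambda_{a_n}}(X_s)\Gamma_s^{(n)}$ in $L^1(\P_x)$. Plugging into $h^B(b)\P_x[F_s\Gamma_{\eta_u^b}^{(n)}]=\P_x[F_s M_s^{b,(n)}]$ and using the boundedness of $F_s$ yields the claim. The principal obstacle is precisely this control of $M_s^{b,(n)}-N_s^{b,(n)}$: a direct estimate on $\{T_b\le s\}$ would require offsetting the divergent factor $h^B(b)$ against the vanishing probability $\P_x(T_b\le s)$, which is not immediate, but invoking the martingale identity from Theorem \ref{penal-2} bypasses that difficulty cleanly.
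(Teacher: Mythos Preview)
Your proof is correct and follows the same skeleton the paper intends: define $M_s^{b,(n)}$ and $N_s^{b,(n)}$, identify the limit of $N_s^{b,(n)}$ via Proposition~\ref{inv}, and then close the gap $M_s^{b,(n)}-N_s^{b,(n)}$. The paper does not spell out the argument for Theorem~\ref{penal-4} beyond saying it is ``almost the same'' as Theorem~\ref{penal-1}.

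The one place where your argument genuinely diverges from a verbatim transcription of the proof of Theorem~\ref{penal-1} is the control of $M_s^{b,(n)}-N_s^{b,(n)}$. In Theorem~\ref{penal-1} this is handled by the deterministic bound $M_s^{q,(n)}-N_s^{q,(n)}\le qr_q(0)\cdot s\to 0$; the naive analogue here would be $M_s^{b,(n)}-N_s^{b,(n)}\le h^B(b)\mathbf{1}_{\{T_b\le s\}}$, whose $L^1$-norm is $h^B(b)\P_x(T_b\le s)$, and you correctly flag that this is not an immediate estimate for a general recurrent L\'evy process. Your workaround---match the expectations via Proposition~\ref{inv} at $x$ and the martingale identity $\P_x[\varphi_{A_n}^{(\pm1),\lambda_{a_1},\ldots,\lambda_{a_n}}(X_s)\Gamma_s^{(n)}]=\varphi_{A_n}^{(\pm1),\lambda_{a_1},\ldots,\lambda_{a_n}}(x)$ already established in Theorem~\ref{penal-2}, then use nonnegativity of $M_s^{b,(n)}-N_s^{b,(n)}$ to upgrade to $L^1$---is a clean and self-contained substitute that avoids any extra input. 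This is a legitimate (and arguably tidier) variant of what the paper has in mind.
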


The proof is almost the same as that of Theorem \ref{penal-1}, based on Proposition \ref{inv} and so we omit it.


\section{The case of $n=2$}
\label{Sec7}
\subsection{Exponential clock}
By (\ref{P1}) and (\ref{P7}), we have
\begin{align}
\label{P3}
   J_{k,i}&=\P_{a_k}\left[e^{-\lambda_{a_k}L_{T_{a_i}}^{a_k}}\right]=\frac{1}{1+\lambda_{a_k}h^B(a_k-a_i)},\\
  I_{1}&=\frac{1-h(a_2-a_1)n^{a_1}(T_{a_2}<\infty)}{n^{a_1}(T_{a_2}<\infty)+\lambda_{a_1}}=\frac{h(a_1-a_2)}{1+\lambda_{a_1}h^B(a_2-a_1)}.
\end{align}
Thus, we have
\begin{align*}
  \varphi_{A_2}^{\lambda_{a_1},\lambda_{a_2}}(x)&=\varphi_{A_2}(x)+\sum_{k=1}^2 \left\{\P_x\left(T_{a_k}=T_{A_2}\right)\cdot \lim_{q\to 0}r_q(0)\P_{a_k}\left[\Gamma_{\bm{e}_q}^{(2)}\right]\right\}\\
  &=\varphi_{A_2}(x)+\P_x\left(T_{a_1}<T_{a_2}\right)\cdot \lim_{q\to 0}r_q(0)\P_{a_1}\left[\Gamma_{\bm{e}_q}^{(2)}\right]\\
  &\qquad +\P_x\left(T_{a_2}<T_{a_1}\right)\cdot \lim_{q\to 0}r_q(0)\P_{a_2}\left[\Gamma_{\bm{e}_q}^{(2)}\right]\\
  &=\varphi_{A_2}(x)+\P_x\left(T_{a_1}<T_{a_2}\right)\cdot \frac{I_{1}+J_{1,2}I_{2}}{1-J_{1,2}J_{2,1}}\\
  &\qquad +\P_x\left(T_{a_2}<T_{a_1}\right)\cdot\frac{J_{2,1}I_{1}+I_{2}}{1-J_{1,2}J_{2,1}}\\
  &=\varphi_{A_2}(x)+\P_x\left(T_{a_1}<T_{a_2}\right)\cdot \frac{\frac{h(a_1-a_2)}{1+\lambda_{a_1}h^B(a_1-a_2)}+\frac{1}{1+\lambda_{a_1}h^B(a_1-a_2)}\frac{h(a_2-a_1)}{1+\lambda_{a_2}h^B(a_1-a_2)}}{1-\frac{1}{1+\lambda_{a_1}h^B(a_1-a_2)}\frac{1}{1+\lambda_{a_2}h^B(a_1-a_2)}}\\
  &\qquad +\P_x\left(T_{a_2}<T_{a_1}\right)\cdot\frac{\frac{1}{1+\lambda_{a_2}h^B(a_1-a_2)}\frac{h(a_1-a_2)}{1+\lambda_{a_1}h^B(a_1-a_2)}+\frac{h(a_2-a_1)}{1+\lambda_{a_2}h^B(a_1-a_2)}}{1-\frac{1}{1+\lambda_{a_1}h^B(a_1-a_2)}\frac{1}{1+\lambda_{a_2}h^B(a_1-a_2)}}\\
  &=\varphi_{A_2}(x)+\P_x(T_{a_1}<T_{a_2})\cdot \frac{1+\lambda_{a_2}h(a_1-a_2)}{\lambda_{a_1}+\lambda_{a_2}+\lambda_{a_1}\lambda_{a_2}h^B(a_1-a_2)}\\
  &\qquad +\P_x(T_{a_2}<T_{a_1})\cdot \frac{1+\lambda_{a_1}h(a_2-a_1)}{\lambda_{a_1}+\lambda_{a_2}+\lambda_{a_1}\lambda_{a_2}h^B(a_1-a_2)}.
  \stepcounter{equation}\tag{\theequation}
\end{align*}
This coincides with $\varphi_{a_1,a_2}^{(0),\lambda_{a_1},\lambda_{a_2}}(x)$ which is defined by Remark 1.4 of Iba-Yano \cite{IY-3}.

\subsection{One-point hitting time clock}
By (\ref{P3}), we have
\begin{align}
  I_1^{(\pm 1)}&=\frac{h^{(\pm 1)}(a_1-a_2)}{1+\lambda_{a_1}h^B(a_1-a_2)},\\
  I_2^{(\pm 1)}&=\frac{h^{(\pm 1)}(a_2-a_1)}{1+\lambda_{a_2}h^B(a_1-a_2)}.
\end{align}
Thus, we have
\begin{align*}
  \varphi_{A_2}^{(\pm 1),\lambda_{a_1},\lambda_{a_2}}(x)&=\varphi_{A_2}^{(\pm 1)}(x)+\sum_{k=1}^2 \left\{\P_x\left(T_{a_k}=T_{A_2}\right)\cdot \lim_{b\to \pm \infty}h^B(b)\P_{a_k}\left[\Gamma_{T_b}^{(2)}\right]\right\}\\
  &=\varphi_{A_2}^{(0)}(x)+\P_x\left(T_{a_1}<T_{a_2}\right)\cdot \frac{I_1^{(\pm 1)}+J_{1,2}I_2^{(\pm 1)}}{1-J_{1,2}J_{2,1}}\\
  &\qquad +\P_x\left(T_{a_2}<T_{a_1}\right)\cdot\frac{J_{2,1}I_1^{(\pm 1)}+I_2^{(\pm 1)}}{1-J_{1,2}J_{2,1}}\\
    &=\varphi_{A_2}^{(\pm 1)}(x)+\P_x\left(T_{a_1}<T_{a_2}\right)\cdot \frac{\frac{h^{(\pm 1)}(a_1-a_2)}{1+\lambda_{a_1}h^B(a_1-a_2)}+\frac{1}{1+\lambda_{a_1}h^B(a_1-a_2)}\frac{h^{(\pm 1)}(a_2-a_1)}{1+\lambda_{a_2}h^B(a_1-a_2)}}{1-\frac{1}{1+\lambda_{a_1}h^B(a_1-a_2)}\frac{1}{1+\lambda_{a_2}h^B(a_1-a_2)}}\\
  &\qquad +\P_x\left(T_{a_2}<T_{a_1}\right)\cdot\frac{\frac{1}{1+\lambda_{a_2}h^B(a_1-a_2)}\frac{h^{(\pm 1)}(a_1-a_2)}{1+\lambda_{a_1}h^B(a_1-a_2)}+\frac{h^{(\pm 1)}(a_2-a_1)}{1+\lambda_{a_2}h^B(a_1-a_2)}}{1-\frac{1}{1+\lambda_{a_1}h^B(a_1-a_2)}\frac{1}{1+\lambda_{a_2}h^B(a_1-a_2)}}\\
  &=\varphi_{A_2}^{(\pm 1)}(x)+\P_x(T_{a_1}<T_{a_2})\cdot \frac{1+\lambda_{a_2}h^{(\pm 1)}(a_1-a_2)}{\lambda_{a_1}+\lambda_{a_2}+\lambda_{a_1}\lambda_{a_2}h^B(a_1-a_2)}\\
  &\qquad +\P_x(T_{a_2}<T_{a_1})\cdot \frac{1+\lambda_{a_1}h^{(\pm 1)}(a_2-a_1)}{\lambda_{a_1}+\lambda_{a_2}+\lambda_{a_1}\lambda_{a_2}h^B(a_1-a_2)}.
  \stepcounter{equation}\tag{\theequation}
\end{align*}
This coincides with $\varphi_{a_1,a_2}^{(\pm 1),\lambda_{a_1},\lambda_{a_2}}(x)$ which is defined by Remark 1.4 of Iba-Yano \cite{IY-3}.

Similarly, it can be seen that the cases of the two-point hitting time clock and the inverse local time clock also coincide with that of Remark 1.4 of Iba-Yano \cite{IY-3}.


\section{Penalized measure}
\label{Sec8}
In this section, we study the penalized measure, that is a measure obtained as the limit in a penalization problem. By Theorem 1.7 of Takeda-Yano \cite{TY}, we can define the one-point penalized measure as
\begin{align}
  \Q_{x}^{(\gamma,1)}\Big|_{\F_t}= \frac{\varphi_{A_1}^{(\gamma),\lambda_{a_1}}(X_s)\Gamma_s^{(1)}}{\varphi_{A_1}^{(\gamma),\lambda_{a_1}}(X_0)\Gamma_0^{(1)}}\cdot \P_x\Big|_{\F_t},
\end{align}
where 
\begin{align}
  \varphi_{A_1}^{(\gamma),\lambda_{a_1}}(x):=h^{(\gamma)}(x-a_1)+\frac{1}{\lambda_{a_1}}.
\end{align}
Moreover, from the preceding discussion, for $n\ge 2$, we can define the $n$-point penalized measure as
\begin{align}
  \Q_{x}^{(\gamma,n)}\Big|_{\F_t}=\frac{\varphi_{A_n}^{(\gamma),\lambda_{a_1},...,\lambda_{a_n}}(X_s)\Gamma_s^{(n)}}{\varphi_{A_n}^{(\gamma),\lambda_{a_1},...,\lambda_{a_n}}(x)}\cdot \P_x\big|_{\F_t}.
\end{align}
Note that the measure $\Q_x^{(\gamma,n)}$ can be well-defined on $\F_\infty$ (see, e.g., Theorem 9.1 of \cite{Yano}).

First, we describe the behavior of the process under the penalized measure. Although the proof of the following proposition is parallel to Theorem 1.4 of Takeda \cite{Takeda}, we give the proof for completeness of this paper.\\

\begin{prop}
  For $n\ge 1$, the $n$-point penalized process $((X_t)_{t\ge 0},\Q_{x}^{(\gamma,n)})$ is transient.
\end{prop}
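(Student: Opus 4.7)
The plan is to show that under $\Q_x^{(\gamma,n)}$, each local time $L_\infty^{a_k}$ is finite almost surely, and to deduce from this that the process visits every compact set for only finite total time, which gives transience.

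For the first step, I would compute the Laplace transform of $L_t^{a_k}$ under $\Q$ and bound it uniformly in $t$. By the definition of $\Q$, for $0 < \mu < \lambda_{a_k}$,
\[
\Q_x^{(\gamma,n)}\bigl[e^{\mu L_t^{a_k}}\bigr] = \frac{1}{\varphi_{A_n}^{(\gamma),\lambda_{a_1},...,\lambda_{a_n}}(x)}\,\P_x\bigl[\varphi_{A_n}^{(\gamma),\lambda_{a_1},...,\lambda_{a_n}}(X_t)\,\Gamma_t^{(n)}\,e^{\mu L_t^{a_k}}\bigr].
\]
The modified weight $\Gamma_t^{(n)}\,e^{\mu L_t^{a_k}}$ is the $n$-point weight with $\lambda_{a_k}$ replaced by $\lambda_{a_k}-\mu > 0$; let $\widetilde\varphi$ denote the corresponding function from \eqref{func-1}. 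By Theorem \ref{penal-1}, $\widetilde\varphi(X_t)\Gamma_t^{(n)}e^{\mu L_t^{a_k}}$ is a $\P_x$-martingale of constant expectation $\widetilde\varphi(x)$. Since $\varphi$ and $\widetilde\varphi$ share the same leading $h^{(\gamma)}$-asymptotics and differ only by bounded corrections built from $(\mathbb{E}_n-\mathbb{J}_n)^{-1}\bm{i}_n^{(\gamma)}$, I expect a pointwise bound $\varphi \le C\widetilde\varphi$. Granting this, $\Q_x^{(\gamma,n)}[e^{\mu L_t^{a_k}}] \le C\widetilde\varphi(x)/\varphi(x)$ uniformly in $t$, and Fatou's lemma gives $\Q_x^{(\gamma,n)}[e^{\mu L_\infty^{a_k}}] < \infty$, so $L_\infty^{a_k} < \infty$ $\Q$-a.s.

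Having this for every $k$, the random set $\{t \ge 0 : X_t \in A_n\}$ is $\Q$-a.s.\ bounded, and after the finite last-exit time $\sigma$ from $A_n$ the weight $\Gamma_t^{(n)}$ stops changing. This reduces the post-$\sigma$ behavior of $X$ under $\Q$ to an $h$-transform, by $\varphi_{A_n}^{(\gamma),\lambda_{a_1},...,\lambda_{a_n}}$, of the underlying L\'evy process killed on hitting $A_n$; as in the conditioning-to-avoid results of Iba \cite{Iba}, this transformed process is transient, so $\int_\sigma^\infty \mathbf{1}_K(X_t)\,dt < \infty$ $\Q$-a.s.\ for every compact $K$, yielding the claim. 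The main obstacle will be the pointwise comparison $\varphi \le C\widetilde\varphi$, since both functions are given by the intricate formula \eqref{func-1}; if this proves awkward, an alternative more directly modeled on Takeda's Theorem 1.4 would be to show $\Q_x^{(\gamma,n)}\bigl[\int_0^\infty \mathbf{1}_B(X_t)\,dt\bigr] < \infty$ for bounded $B$ via a direct Green-function computation using the excursion decomposition developed in Section \ref{Sec3}.
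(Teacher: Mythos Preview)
Your route is genuinely different from the paper's, and it carries real gaps that the paper avoids entirely. The paper's argument is a three-line supermartingale trick: under $\Q_x^{(\gamma,n)}$, the process $\bigl(1/\varphi_{A_n}^{(\gamma),\lambda_{a_1},\dots,\lambda_{a_n}}(X_t)\bigr)_{t\ge 0}$ is a non-negative supermartingale, because unwinding the definition of $\Q$ gives
\[
\Q_x^{(\gamma,n)}\!\left[\frac{F_s}{\varphi_{A_n}^{(\gamma),\lambda_{a_1},\dots,\lambda_{a_n}}(X_t)}\right]
=\frac{1}{\varphi_{A_n}^{(\gamma),\lambda_{a_1},\dots,\lambda_{a_n}}(x)}\,\P_x\!\bigl[F_s\,\Gamma_t^{(n)}\bigr]
\]
and $\Gamma_t^{(n)}$ is nonincreasing in $t$. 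Martingale convergence gives an a.s.\ limit; Fatou together with recurrence of $X$ under $\P_x$ (hence $L_t^{a_k}\to\infty$ and $\P_x[\Gamma_t^{(n)}]\to 0$) forces that limit to be $0$. Thus $\varphi_{A_n}^{(\gamma),\lambda_{a_1},\dots,\lambda_{a_n}}(X_t)\to\infty$, and since $\varphi$ equals $h^{(\gamma)}(x-a_n)$ plus a bounded term, this yields $|X_t|\to\infty$ $\Q$-a.s. No comparison of harmonic functions, no last-exit machinery.

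In your outline, the pointwise bound $\varphi\le C\widetilde\varphi$ is plausible (both equal $\varphi_{A_n}^{(\gamma)}$ plus a bounded correction) but you have not excluded zeros of $\widetilde\varphi$, and the claim is only ``expected''. More seriously, the second step is shaky on two counts. First, the last exit time $\sigma$ from $A_n$ is not a stopping time, so the claimed identification of the post-$\sigma$ law as an $h$-transform of the killed process needs a genuine last-exit decomposition, not just the observation that $\Gamma_t^{(n)}$ freezes. Second, even granting that description, the transform is by $\varphi_{A_n}^{(\gamma),\lambda_{a_1},\dots,\lambda_{a_n}}$, not by the $\varphi_{A_n}^{(\gamma)}$ of \cite{Iba}, so you cannot simply import transience from the conditioning-to-avoid results; you would need a separate argument that this particular $h$-transform is transient. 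Your Laplace-transform step, if completed, only yields $L_\infty^{a_k}<\infty$, which is far weaker than $|X_t|\to\infty$. The supermartingale argument bypasses every one of these issues.
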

\begin{proof}
  For $0<s<t$ and a non-negative bounded $\F_s$-measurable functional $F_s$, we have
  \begin{align*}
    \Q_x^{(\gamma,n)}\left[\frac{1}{\varphi_{A_n}^{(\gamma),\lambda_{a_1},...,\lambda_{a_n}}(X_t)}\cdot F_s\right]&=\frac{1}{\varphi_{A_n}^{(\gamma),\lambda_{a_1},...,\lambda_{a_n}}(x)}\P_x\left[F_s\cdot \Gamma_t^{(n)}\right]\\
    &\le \frac{1}{\varphi_{A_n}^{(\gamma),\lambda_{a_1},...,\lambda_{a_n}}(x)}\P_x\left[F_s\cdot \Gamma_s^{(n)}\right]\\
    &=\Q_x^{(\gamma,n)}\left[\frac{1}{\varphi_{A_n}^{(\gamma),\lambda_{a_1},...,\lambda_{a_n}}(X_s)}\cdot F_s\right].
    \stepcounter{equation}\tag{\theequation}
  \end{align*}
  Thus, $(\frac{1}{\varphi_{A_n}^{(\gamma),\lambda_{a_1},...,\lambda_{a_n}}(X_t)})_{t\ge 0}$ is a non-negative $\Q_x^{(\gamma,n)}$-supermartingale. By the martingale convergence theorem, 
  \begin{align}
    \lim_{t\to \infty} \frac{1}{\varphi_{A_n}^{(\gamma),\lambda_{a_1},...,\lambda_{a_n}}(X_t)}\ \text{exists}\qquad \Q_x^{(\gamma,n)}\text{-a.s.}
  \end{align}
  By Fatou's lemma and recurrence of $((X_t)_{t\ge 0},\P_x)$, we have
  \begin{align*}
    \Q_x^{(\gamma,n)}\left[\lim_{t\to \infty}\frac{1}{\varphi_{A_n}^{(\gamma),\lambda_{a_1},...,\lambda_{a_n}}(X_t)}\right]&\le \varliminf_{t\to \infty}\Q_x^{(\gamma,n)}\left[\frac{1}{\varphi_{A_n}^{(\gamma),\lambda_{a_1},...,\lambda_{a_n}}(X_t)}\right]\\
    &=\varliminf_{t\to \infty}\frac{1}{\varphi_{A_n}^{(\gamma),\lambda_{a_1},...,\lambda_{a_n}}(x)}\P_x\left[\Gamma_t^{(n)}\right]\\
    &=0.
    \stepcounter{equation}\tag{\theequation}
  \end{align*}
  This implies
  \begin{align}
    \lim_{t\to \infty}\frac{1}{\varphi_{A_n}^{(\gamma),\lambda_{a_1},...,\lambda_{a_n}}(X_t)}=0\qquad \Q_x^{(\gamma,n)}\text{-a.s.}
  \end{align}
  Thus, by the definition of $\varphi_{A_n}^{(\gamma),\lambda_{a_1},...,\lambda_{a_n}}(x)$, we obtain
  \begin{align}
  \label{tran}
    \lim_{t\to \infty}|X_t|=\infty\qquad \Q_x^{(\gamma,n)}\text{-a.s.}
  \end{align}
  It implies that the process $((X_t)_{t\ge 0},\Q_x^{(\gamma,n)})$ is transient.
\end{proof}

Since the process $(X_t)_{t\ge 0}$ was assumed to be recurrent under the measure $\P_x$, this proposition shows that the measures $\P_x$ and $\Q_x^{(\gamma,n)}$ are mutually singular on $\F_\infty$. However on $\F_t$, the measures $\P_x$ and $\Q_x^{(\gamma,n)}$ are equivalent. 

Next, for $n\ge 1$, we define the \emph{unweighted measure} of $\Q_x^{(\gamma,n)}$ by
\begin{align}
\mathscr{Q}_{x}^{(\gamma,n)}&:=\frac{\varphi_{A_n}^{(\gamma)}(x)}{\Gamma_\infty^{(n)}}\cdot \Q_{x}^{(\gamma,n)}\qquad \text{on}\ \F_\infty.
\end{align}
The unweighted measures between $n$-point penalized measures actually coincide.\\

\begin{prop}
  For $n\ge 2$, it holds that
  \begin{align}
    \mathscr{Q}_{x}^{(\gamma,1)}= \mathscr{Q}_{x}^{(\gamma,n)}.
  \end{align}
\end{prop}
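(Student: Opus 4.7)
The approach is to express $\mathscr{Q}_x^{(\gamma,n)}|_{\F_t}$ explicitly as a density with respect to $\P_x|_{\F_t}$ for each $t$, and then verify that this density does not depend on $n$. Because the density defining $\Q_x^{(\gamma,n)}|_{\F_t}$ has the form $h(X_t)\Gamma_t^{(n)}/h(x)$ with $h=\varphi_{A_n}^{(\gamma),\lambda_{a_1},\ldots,\lambda_{a_n}}$, and since the martingale identity of Theorem~\ref{penal-1} says precisely that $h$ is harmonic for the Kac-killed semigroup associated with $\Gamma^{(n)}$, the process $(X_t)$ remains Markov under $\Q_x^{(\gamma,n)}$ by Doob's $h$-transform applied to the killed process. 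Combining this Markov property with the multiplicativity $\Gamma_\infty^{(n)}=\Gamma_t^{(n)}\cdot(\Gamma_\infty^{(n)}\circ\theta_t)$ and monotone convergence (valid since $\Gamma_T^{(n)}$ is decreasing in $T$, so $1/\Gamma_T^{(n)}\uparrow 1/\Gamma_\infty^{(n)}$), one obtains, for every bounded non-negative $\F_t$-measurable $F_t$,
\[
\mathscr{Q}_x^{(\gamma,n)}[F_t] \;=\; \frac{\varphi_{A_n}^{(\gamma)}(x)}{\varphi_{A_n}^{(\gamma),\lambda_{a_1},\ldots,\lambda_{a_n}}(x)}\,\P_x\!\Bigl[F_t\,\varphi_{A_n}^{(\gamma),\lambda_{a_1},\ldots,\lambda_{a_n}}(X_t)\,K_n(X_t)\Bigr],
\]
where $K_n(y):=\Q_y^{(\gamma,n)}[1/\Gamma_\infty^{(n)}]$.

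Next, I would identify the product $\varphi_{A_n}^{(\gamma),\lambda_{a_1},\ldots,\lambda_{a_n}}(y)K_n(y)$ with the monotone limit
\[
\psi_n(y) \;:=\; \lim_{T\to\infty}\P_y\!\bigl[\varphi_{A_n}^{(\gamma),\lambda_{a_1},\ldots,\lambda_{a_n}}(X_T)\bigr],
\]
obtained by rewriting $\Q_y^{(\gamma,n)}[1/\Gamma_T^{(n)}]=\P_y[\varphi_{A_n}^{(\gamma),\lambda_{a_1},\ldots,\lambda_{a_n}}(X_T)]/\varphi_{A_n}^{(\gamma),\lambda_{a_1},\ldots,\lambda_{a_n}}(y)$ and letting $T\to\infty$. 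A strong Markov argument at $T_{A_n}$ together with the recurrence of $X$ then yields the harmonicity relation $\psi_n(y)=\sum_{k=1}^n\P_y(T_{a_k}=T_{A_n})\,\psi_n(a_k)$, so $\psi_n$ is completely determined by its boundary values $\{\psi_n(a_k)\}_{k=1}^n$, and these boundary values are themselves characterized by an excursion-theoretic linear system at each $a_k$ analogous to~(\ref{simeq}).

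The remaining--and principal--obstacle is the algebraic identification: one must verify that $\varphi_{A_n}^{(\gamma)}(x)\psi_n(y)/\varphi_{A_n}^{(\gamma),\lambda_{a_1},\ldots,\lambda_{a_n}}(x)$ coincides with its $n=1$ counterpart, which demands a careful matching of the system~(\ref{simeq}) with the linear system for $\{\psi_n(a_k)\}$. A cleaner route that sidesteps this direct algebra is to prove the claim by induction on $n$: assuming $\mathscr{Q}_x^{(\gamma,n-1)}=\mathscr{Q}_x^{(\gamma,1)}$, one establishes $\mathscr{Q}_x^{(\gamma,n)}=\mathscr{Q}_x^{(\gamma,n-1)}$ by ``peeling off'' the factor $e^{-\lambda_{a_n}L_t^{a_n}}$ via excursion theory away from $a_n$--essentially a reuse, in miniature, of the excursion computations (\ref{I-1})--(\ref{II-3})--combined with the linear-algebraic relation between $\varphi_{A_n}^{(\gamma),\lambda_{a_1},\ldots,\lambda_{a_n}}$ and $\varphi_{A_{n-1}}^{(\gamma),\lambda_{a_1},\ldots,\lambda_{a_{n-1}}}$ coming from~(\ref{simeq}). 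The base case $n=2$ follows directly from the explicit formulas of Section~\ref{Sec7} together with the definition at $n=1$.
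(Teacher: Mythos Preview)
Your approach departs substantially from the paper's and carries a genuine integrability gap. The paper's proof is essentially two lines: from the transience result~(\ref{tran}) one notes that
\[
\lim_{t\to\infty}\frac{\varphi_{A_n}^{(\gamma),\lambda_{a_1},\ldots,\lambda_{a_n}}(X_t)}{h^{(\gamma)}(X_t-a_n)}=1\qquad \Q_x^{(\gamma,n)}\text{-a.s.},
\]
since the numerator is $h^{(\gamma)}(\cdot-a_n)$ plus bounded correction terms, and then invokes Theorem~4.1 of Yano~\cite{Yano}, a general universality result for penalisations with multiplicative weights, applied with $\Gamma_t=\Gamma_t^{(n)}$ and $\mathscr{E}_t=\Gamma_t^{(1)}$. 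No algebra involving the linear system~(\ref{simeq}) is needed.

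Your route instead tries to realise $\mathscr{Q}_x^{(\gamma,n)}|_{\F_t}$ via a $\P_x$-density built from $K_n(y)=\Q_y^{(\gamma,n)}[1/\Gamma_\infty^{(n)}]$. The difficulty is that $K_n\equiv\infty$. Already for $n=1$ one has, under $\Q_{a_1}^{(\gamma,1)}$, that $L_\infty^{a_1}$ is exponential with parameter $\lambda_{a_1}$ (this is the one-point case of the final proposition in Section~\ref{Sec8}, or directly from~\cite{TY}), whence $\Q_{a_1}^{(\gamma,1)}\!\bigl[e^{\lambda_{a_1}L_\infty^{a_1}}\bigr]=\infty$. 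Equivalently, your $\psi_n(y)=\lim_{T\to\infty}\P_y\bigl[\varphi_{A_n}^{(\gamma),\lambda_{a_1},\ldots,\lambda_{a_n}}(X_T)\bigr]$ diverges: since $\varphi_{A_n}^{(\gamma),\lambda_{a_1},\ldots,\lambda_{a_n}}(X_t)\Gamma_t^{(n)}$ is a $\P_y$-martingale while $\Gamma_t^{(n)}\downarrow 0$ a.s.\ by recurrence, the factor $\varphi_{A_n}^{(\gamma),\lambda_{a_1},\ldots,\lambda_{a_n}}(X_t)$ must blow up in $L^1(\P_y)$ (concretely, $\varphi$ differs from $h^{(\gamma)}(\cdot-a_n)$ by a bounded term, and $\P_y[h(X_T-a_n)]$ grows like $\P_y[L_T^{a_n}]\to\infty$). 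Hence the unweighted measure is only $\sigma$-finite on $\F_\infty$ and its restriction to $\F_t$ is \emph{not} given by a finite $\P_x$-density; the comparison must be carried out directly on $\F_\infty$, which is precisely what Yano's theorem is designed to do. Your inductive ``peeling off'' sketch does not circumvent this obstruction, and the algebraic identification you flag as the principal obstacle is in any case not required once the correct tool is in hand.
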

\begin{proof}
By (\ref{tran}), we have
\begin{align}
  \lim_{t\to \infty}\frac{\varphi_{A_n}^{(\gamma),\lambda_{a_1},...,\lambda_{a_n}}(X_t)}{h^{(\gamma)}(X_t-a_n)}=1\qquad Q_x^{(\gamma,n)}\text{-a.s.}
\end{align}
Therefore, we apply to Theorem 4.1 of Yano \cite{Yano} as $\Gamma_t=\Gamma_t^{(n)}$ and $\mathscr{E}_t=\Gamma_t^{(1)}=e^{-\lambda_{a_1}L_t^{a_1}}$, then the assertion holds.
\end{proof}

Thanks to this proposition, we have the explicit formula between $n$-point penalized measures.\\

\begin{cor}
  For $n\ge 2$, it holds that
  \begin{align}
    \Q_{x}^{(\gamma,n)}=\frac{\varphi_{A_1}^{(\gamma),\lambda_{a_1}}(x)}{\varphi_{A_n}^{(\gamma),\lambda_{a_1},...,\lambda_{a_n}}(x)}e^{-(\lambda_{a_2} L_\infty^{a_2}+\cdots +\lambda_{a_n}L_\infty^{a_n})}\cdot \Q_{x}^{(\gamma,1)}\qquad \text{on}\ \F_\infty.
  \end{align}
  Moreover, by considering the measure of the whole space, we obtain
  \begin{align}
    \varphi_{A_n}^{(\gamma),\lambda_{a_1},...,\lambda_{a_n}}(x)=\varphi_{A_1}^{(\gamma),\lambda_{a_1}}(x)\Q_{x}^{(\gamma,1)}\left[e^{-(\lambda_{a_2} L_\infty^{a_2}+\cdots +\lambda_{a_n}L_\infty^{a_n})}\right].
  \end{align}
\end{cor}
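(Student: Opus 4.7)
The plan is to derive the corollary as a direct algebraic rearrangement of the equality $\mathscr{Q}_x^{(\gamma,1)} = \mathscr{Q}_x^{(\gamma,n)}$ on $\F_\infty$ just established in the preceding proposition. First I would unfold both sides using the definition of the unweighted measures, which yields
\begin{align*}
\frac{\varphi_{A_1}^{(\gamma),\lambda_{a_1}}(x)}{\Gamma_\infty^{(1)}} \cdot \Q_x^{(\gamma,1)} \;=\; \frac{\varphi_{A_n}^{(\gamma),\lambda_{a_1},\ldots,\lambda_{a_n}}(x)}{\Gamma_\infty^{(n)}} \cdot \Q_x^{(\gamma,n)}.
\end{align*}
Since the $\varphi$-factors are positive constants and $\Gamma_\infty^{(n)} \in (0,1]$ is almost surely positive, I can solve for $\Q_x^{(\gamma,n)}$ legitimately; the ratio $\Gamma_\infty^{(n)}/\Gamma_\infty^{(1)}$ telescopes at once to $\exp(-\sum_{k=2}^n \lambda_{a_k} L_\infty^{a_k})$ from the product form of the weights, which produces the first identity.

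For the second identity, I would evaluate both sides of the first identity on the total event space. Since $\Q_x^{(\gamma,n)}$ arises as a penalization limit normalized by $\varphi_{A_n}^{(\gamma),\lambda_{a_1},\ldots,\lambda_{a_n}}(x) = M_0^{\Gamma^{(n)}}$, it is a probability measure on $\F_\infty$, so its total mass equals $1$. Equating $1$ with the total mass of the right-hand side, namely
\begin{align*}
\frac{\varphi_{A_1}^{(\gamma),\lambda_{a_1}}(x)}{\varphi_{A_n}^{(\gamma),\lambda_{a_1},\ldots,\lambda_{a_n}}(x)}\,\Q_x^{(\gamma,1)}\!\left[\exp\Bigl(-\sum_{k=2}^n \lambda_{a_k} L_\infty^{a_k}\Bigr)\right],
\end{align*}
and rearranging for $\varphi_{A_n}^{(\gamma),\lambda_{a_1},\ldots,\lambda_{a_n}}(x)$ gives the stated formula.

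There is no real obstacle here: the whole argument is a short algebraic manipulation of the preceding proposition followed by a total-mass inspection. The only mild point worth flagging is the almost-sure positivity of $\Gamma_\infty^{(n)}$, which is automatic from its exponential form, so the Radon--Nikodym manipulation on $\F_\infty$ is valid without further justification.
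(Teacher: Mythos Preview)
Your proof is correct and follows exactly the route the paper intends: the corollary is stated there without proof, as an immediate algebraic consequence of the preceding proposition $\mathscr{Q}_x^{(\gamma,1)}=\mathscr{Q}_x^{(\gamma,n)}$, and your unpacking of the unweighted-measure definitions together with the total-mass argument is precisely that derivation. The one point to sharpen is your justification that $\Gamma_\infty^{(n)}>0$: this is \emph{not} automatic from the exponential form alone (under $\P_x$ the process is recurrent and each $L_\infty^{a_k}=\infty$), but rather follows from the transience of $((X_t)_{t\ge 0},\Q_x^{(\gamma,m)})$ established in the preceding proposition, which forces each $L_\infty^{a_k}<\infty$ $\Q_x^{(\gamma,m)}$-a.s.
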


Finally, we consider the distribution of the final local time under $\Q_x^{(\gamma,n)}.$\\

\begin{prop}
  Let $c\in \R$. It holds that
  \begin{align*}
    \Q_{x}^{(\gamma,n)}(L_\infty^c\in dt)&=\frac{\varphi_{A_n}^{(\gamma),\lambda_{a_1},...,\lambda_{a_n}}(c)}{\varphi_{A_n}^{(\gamma),\lambda_{a_1},...,\lambda_{a_n}}(x)}\P_x\left[\Gamma_{T_c}^{(n)}\right]\bm{n}^c\left[1-\Gamma_{T_c}^{(n)}\right]e^{-t \bm{n}^c\left[1-\Gamma_{T_c}^{(n)}\right]}dt\\
&\qquad +\left(1-\frac{\varphi_{A_n}^{(\gamma),\lambda_{a_1},...,\lambda_{a_n}}(c)}{\varphi_{A_n}^{(\gamma),\lambda_{a_1},...,\lambda_{a_n}}(x)}\P_x\left[\Gamma_{T_c}^{(n)}\right]\right)\delta_0(dt).
\stepcounter{equation}\tag{\theequation}
  \end{align*}
  Consequently, by setting $x=c$, we obtain
  \begin{align}
    \Q_{c}^{(\gamma,n)}(L_\infty^{c}\in \cdot )\stackrel{d}{=} \mathrm{Exp}\left(\bm{n}^c\left[1-\Gamma_{T_c}^{(n)}\right]\right).
  \end{align}
\end{prop}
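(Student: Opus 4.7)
The plan is to decompose $\Q_x^{(\gamma,n)}(L_\infty^c\in dt)$ into its atom at $0$ and its density on $(0,\infty)$, handle both through the non-negative $(\P_x,\F_t)$-martingale $M_t:=\varphi_{A_n}^{(\gamma),\lambda_{a_1},\ldots,\lambda_{a_n}}(X_t)\Gamma_t^{(n)}/\varphi_{A_n}^{(\gamma),\lambda_{a_1},\ldots,\lambda_{a_n}}(x)$ that defines $\Q_x^{(\gamma,n)}$, and reduce the density part to the case $x=c$ via a strong Markov argument. Since $L_{T_c}^c=0$, the identity $\{L_\infty^c=0\}=\{T_c=\infty\}$ holds $\Q_x^{(\gamma,n)}$-a.s., so the atom at $0$ equals $\Q_x^{(\gamma,n)}(T_c=\infty)$.

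First I compute $\Q_x^{(\gamma,n)}(T_c<\infty)$. Since $((X_t),\P_x)$ is recurrent, $T_c<\infty$ $\P_x$-a.s., and optional sampling of $M_\cdot$ at $T_c\wedge t$ followed by $t\to\infty$ (by monotone convergence, using $X_{T_c}=c$) yields
\begin{align*}
  \Q_x^{(\gamma,n)}(T_c<\infty)=\P_x[M_{T_c}]=\frac{\varphi_{A_n}^{(\gamma),\lambda_{a_1},\ldots,\lambda_{a_n}}(c)}{\varphi_{A_n}^{(\gamma),\lambda_{a_1},\ldots,\lambda_{a_n}}(x)}\,\P_x[\Gamma_{T_c}^{(n)}],
\end{align*}
which identifies the atom coefficient.

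Next, I establish the strong Markov reduction
\begin{align*}
  \Q_x^{(\gamma,n)}(L_\infty^c\in B)=\Q_x^{(\gamma,n)}(T_c<\infty)\,\Q_c^{(\gamma,n)}(L_\infty^c\in B)\qquad\text{for Borel }B\subset(0,\infty).
\end{align*}
The key is the multiplicative identity $M_t=M_{T_c}\cdot\widetilde M_{t-T_c}(X_{T_c+\cdot})$ on $\{T_c\le t\}$, where $\widetilde M$ is the analogous martingale started from $c$ (so $\P_c[\widetilde M_s]=1$ for every $s$). Combined with the strong Markov property of $\P_x$ at $T_c$, this factorizes expectations of $\F_s$-measurable functionals of the shifted trajectory, and a monotone class/monotone convergence argument (using $L_\infty^c=L_\infty^c\circ\theta_{T_c}$ on $\{T_c<\infty\}$ and $L_t^c\uparrow L_\infty^c$) extends the identity to $L_\infty^c$.

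Finally, I compute $\Q_c^{(\gamma,n)}(L_\infty^c>u)$ for $u>0$. Using $\{L_\infty^c>u\}=\{\eta_u^c<\infty\}$ together with $M_{\eta_u^c}=\Gamma_{\eta_u^c}^{(n)}$ (since $X_{\eta_u^c}=c$), optional sampling at $\eta_u^c\wedge t$ followed by $t\to\infty$ (valid since $\eta_u^c<\infty$ $\P_c$-a.s. by recurrence) gives
\begin{align*}
  \Q_c^{(\gamma,n)}(\eta_u^c<\infty)=\P_c[\Gamma_{\eta_u^c}^{(n)}]=\exp\{-u\,\bm n^c[1-\Gamma_{T_c}^{(n)}]\},
\end{align*}
where the last equality is exactly (\ref{P4}) from the proof of Proposition \ref{inv} specialized to $b=c$. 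Hence $L_\infty^c\sim\mathrm{Exp}(\bm n^c[1-\Gamma_{T_c}^{(n)}])$ under $\Q_c^{(\gamma,n)}$; differentiating in $u$ and combining with the previous steps produces the full formula, and the consequent statement is immediate because at $x=c$ the atom coefficient equals $1$. The main obstacle is the strong Markov step: $\Q_x^{(\gamma,n)}$ is defined by densities on each $\F_t$ without an a priori closed-form Radon--Nikodym derivative on $\F_\infty$, so one must exploit the multiplicative structure of $M_t$ and $\P_c[\widetilde M_s]=1$ to factor the pre- and post-$T_c$ contributions before passing to the limit $t\to\infty$; the recurrence of $\P_x$ then guarantees that the passage yields equalities rather than mere Fatou inequalities.
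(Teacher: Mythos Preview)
Your argument is correct, but it takes a longer route than the paper's. You separate the atom at $0$ from the density on $(0,\infty)$, and for the latter you first establish a strong Markov property of the penalized measure $\Q_x^{(\gamma,n)}$ at $T_c$ in order to reduce to the case $x=c$; you rightly flag this as the main obstacle, since $\Q_x^{(\gamma,n)}$ is only defined via densities on each $\F_t$. The paper bypasses this entirely: it applies optional sampling directly at $\eta_t^c$ (for general starting point $x$) to obtain
\[
  \Q_x^{(\gamma,n)}(L_\infty^c>t)=\frac{\varphi_{A_n}^{(\gamma),\lambda_{a_1},\ldots,\lambda_{a_n}}(c)}{\varphi_{A_n}^{(\gamma),\lambda_{a_1},\ldots,\lambda_{a_n}}(x)}\,\P_x\bigl[\Gamma_{\eta_t^c}^{(n)}\bigr],
\]
and then uses the strong Markov property of the \emph{original} measure $\P_x$ at $T_c$ (which is free) to factor $\P_x[\Gamma_{\eta_t^c}^{(n)}]=\P_x[\Gamma_{T_c}^{(n)}]\,\P_c[\Gamma_{\eta_t^c}^{(n)}]$, followed by (\ref{P4}). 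This yields the full tail distribution for arbitrary $x$ in one stroke, and the atom at $0$ falls out by setting $t=0$. Your approach buys modularity (the strong Markov property of $\Q_x^{(\gamma,n)}$ is of independent interest), while the paper's buys brevity by never leaving the original measure for the factorization.
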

\begin{proof}
By the optional sampling theorem, we have
  \begin{align*}
  \Q_{x}^{(\gamma,n)}(L_s^c>t)&=\P_x\left[1_{\{L_s^c>t\}}\cdot \frac{\varphi_{A_n}^{(\gamma),\lambda_{a_1},...,\lambda_{a_n}}(X_s)\Gamma_s^{(n)}}{\varphi_{A_n}^{(\gamma),\lambda_{a_1},...,\lambda_{a_n}}(x)}\right]\\
  &=\P_x\left[1_{\{\eta_t^c<s\}}\cdot \frac{\varphi_{A_n}^{(\gamma),\lambda_{a_1},...,\lambda_{a_n}}(X_s)\Gamma_s^{(n)}}{\varphi_{A_n}^{(\gamma),\lambda_{a_1},...,\lambda_{a_n}}(x)}\right]\\
  &=\P_x\left[1_{\{\eta_t^c<s\}}\cdot \frac{\varphi_{A_n}^{(\gamma),\lambda_{a_1},...,\lambda_{a_n}}(X_{\eta_t^c})\Gamma_{\eta_t^c}^{(n)}}{\varphi_{A_n}^{(\gamma),\lambda_{a_1},...,\lambda_{a_n}}(x)}\right]\\
  &=\frac{\varphi_{A_n}^{(\gamma),\lambda_{a_1},...,\lambda_{a_n}}(c)}{\varphi_{A_n}^{(\gamma),\lambda_{a_1},...,\lambda_{a_n}}(x)}\P_x\left[1_{\{\eta_t^c<s\}}\Gamma_{\eta_t^c}^{(n)}\right].
  \stepcounter{equation}\tag{\theequation}
\end{align*}
Letting $s\to \infty$, we have by (\ref{P4}),
\begin{align*}
  \Q_{x}^{(\gamma,n)}(L_\infty^c>t)&=\frac{\varphi_{A_n}^{(\gamma),\lambda_{a_1},...,\lambda_{a_n}}(c)}{\varphi_{A_n}^{(\gamma),\lambda_{a_1},...,\lambda_{a_n}}(x)}\P_x\left[\Gamma_{\eta_t^c}^{(n)}\right]\\
  &=\frac{\varphi_{A_n}^{(\gamma),\lambda_{a_1},...,\lambda_{a_n}}(c)}{\varphi_{A_n}^{(\gamma),\lambda_{a_1},...,\lambda_{a_n}}(x)}\P_x\left[\Gamma_{T_c}^{(n)}\right]\P_c\left[\Gamma_{\eta_t^c}^{(n)}\right]\\
   &=\frac{\varphi_{A_n}^{(\gamma),\lambda_{a_1},...,\lambda_{a_n}}(c)}{\varphi_{A_n}^{(\gamma),\lambda_{a_1},...,\lambda_{a_n}}(x)}\P_x\left[\Gamma_{T_c}^{(n)}\right]e^{-t \bm{n}^c\left[1-\Gamma_{T_c}^{(n)}\right]}.
\end{align*}
In particular, we have
\begin{align*}
  \Q_{x}^{(\gamma,n)}(L_\infty^c=0)&=1-\Q_{x}^{(\gamma,n)}(L_\infty^c>0)\\
  &=1-\frac{\varphi_{A_n}^{(\gamma),\lambda_{a_1},...,\lambda_{a_n}}(c)}{\varphi_{A_n}^{(\gamma),\lambda_{a_1},...,\lambda_{a_n}}(x)}\P_x\left[\Gamma_{T_c}^{(n)}\right].
\end{align*}
Therefore, the proof is complete.
\end{proof}


\section{Appendix: the case of $\gamma=0$}
\label{S9}
In papers Takeda-Yano \cite{TY} and Iba-Yano \cite{IY-3}, the limits obtained using the exponential clock and those obtained using the two-point hitting time clock with $\gamma=0$ are consistent. Therefore, the same outcome is expected in the present case. To demonstrate that they are indeed consistent, it is sufficient to show the following identity:
\begin{align*}
\label{g0}
  &\frac{1}{n^{a_k}(T_{A_n\setminus \{a_k\}}<\infty)+\lambda_{a_k}}\left(1-\sum_{\substack{i;\ i\le n\\ i\neq k}}h(a_i-a_k)n^{a_k}(T_{a_i}=T_{A_n\setminus \{a_k\}}<\infty)\right)\\
  &\qquad =\frac{1}{\lambda_{a_k}}-\sum_{\substack{i;\ i\le n\\ i\neq k}}\P_{a_k}\left[e^{-\lambda_{a_k}L_{T_{a_i}}^{a_k}},\ T_{a_i}=T_{A_n\setminus \{a_k\}}\right]\left(\frac{1}{\lambda_{a_k}}+h(a_i-a_k)\right).
  \stepcounter{equation}\tag{\theequation}
\end{align*}

First, we show that the first terms coincide:\\
\begin{lem}
It holds that
\begin{align}
  \frac{1}{n^{a_k}(T_{A_n\setminus \{a_k\}}<\infty)+\lambda_{a_k}}=\frac{1}{\lambda_{a_k}}-\sum_{\substack{i;\ i\le n\\ i\neq k}}\P_{a_k}\left[e^{-\lambda_{a_k}L_{T_{a_i}}^{a_k}},\ T_{a_i}=T_{A_n\setminus \{a_k\}}\right]\frac{1}{\lambda_{a_k}}.
\end{align}
\end{lem}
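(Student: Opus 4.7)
The plan is to recognize that the sum on the right collects into a single Laplace transform of the local time at $a_k$ accumulated by the time the process first reaches $A_n\setminus\{a_k\}$, and then to evaluate that Laplace transform by excursion theory.

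First, since $((X_t)_{t\ge 0},\P_{a_k})$ is recurrent, $T_{A_n\setminus\{a_k\}}<\infty$ almost surely, so the events $\{T_{a_i}=T_{A_n\setminus\{a_k\}}\}$ for $i\ne k$ form a partition (up to a null set). Pulling out the common factor $1/\lambda_{a_k}$, the right-hand side of the claimed identity becomes
\begin{align*}
\frac{1}{\lambda_{a_k}}\left(1-\sum_{\substack{i;\ i\le n\\ i\neq k}}\P_{a_k}\left[e^{-\lambda_{a_k}L_{T_{a_i}}^{a_k}},\ T_{a_i}=T_{A_n\setminus\{a_k\}}\right]\right)=\frac{1}{\lambda_{a_k}}\left(1-\P_{a_k}\left[e^{-\lambda_{a_k}L_{T_{A_n\setminus\{a_k\}}}^{a_k}}\right]\right).
\end{align*}
So the task reduces to computing $\P_{a_k}\bigl[e^{-\lambda_{a_k}L_{T_{A_n\setminus\{a_k\}}}^{a_k}}\bigr]$.

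Next I would use the excursion description away from $a_k$. Let $A:=\{e\in\mathscr{D}^{a_k};\ T_{A_n\setminus\{a_k\}}(e)<\infty\}$ and set $\sigma_A:=\inf\{l>0;\ \epsilon_l^{a_k}\in A\}$, which is the first hitting time of $A$ by the Poisson point process $(\epsilon_l^{a_k})_{l\ge 0}$. A standard fact (used earlier in the paper, cf.\ Lemma 6.17 of \cite{Kyp}) is that $\sigma_A$ is exponentially distributed with parameter $\bm{n}^{a_k}(A)=\bm{n}^{a_k}(T_{A_n\setminus\{a_k\}}<\infty)$. Under $\P_{a_k}$ the local time at $a_k$ coincides with the index of the Poisson point process, so the local time accumulated at $a_k$ up to the instant $T_{A_n\setminus\{a_k\}}$ (the start of the first excursion belonging to $A$) is precisely $\sigma_A$. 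Hence
\begin{align*}
\P_{a_k}\left[e^{-\lambda_{a_k}L_{T_{A_n\setminus\{a_k\}}}^{a_k}}\right]=\P_{a_k}\left[e^{-\lambda_{a_k}\sigma_A}\right]=\frac{\bm{n}^{a_k}(T_{A_n\setminus\{a_k\}}<\infty)}{\bm{n}^{a_k}(T_{A_n\setminus\{a_k\}}<\infty)+\lambda_{a_k}}.
\end{align*}

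Finally, plugging this back into the simplified right-hand side gives
\begin{align*}
\frac{1}{\lambda_{a_k}}\cdot\frac{\lambda_{a_k}}{\bm{n}^{a_k}(T_{A_n\setminus\{a_k\}}<\infty)+\lambda_{a_k}}=\frac{1}{\bm{n}^{a_k}(T_{A_n\setminus\{a_k\}}<\infty)+\lambda_{a_k}},
\end{align*}
which is the left-hand side. The only delicate step is the identification $L_{T_{A_n\setminus\{a_k\}}}^{a_k}=\sigma_A$, which I expect to be the main obstacle: it requires knowing that the local time is continuous and increases only on the closure of the zero set of $X-a_k$, together with a careful use of the convention that $\epsilon^{a_k}_l$ indexes excursions by their starting local time, so that the portion of the path before the first $A$-excursion contributes nothing beyond local time $\sigma_A$ at $a_k$. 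Once this identification is granted, the rest is a one-line Laplace transform computation.
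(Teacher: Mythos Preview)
Your proof is correct and follows essentially the same route as the paper: both reduce the right-hand side to $\frac{1}{\lambda_{a_k}}\bigl(1-\P_{a_k}[e^{-\lambda_{a_k}L^{a_k}_{T_{A_n\setminus\{a_k\}}}}]\bigr)$ and then use that $L^{a_k}_{T_{A_n\setminus\{a_k\}}}$ is exponential with parameter $\bm{n}^{a_k}(T_{A_n\setminus\{a_k\}}<\infty)$. The only cosmetic difference is that the paper cites this exponential law as obtained ``in the same manner as Lemma~6.3 of Takeda--Yano~\cite{TY},'' whereas you spell out the underlying identification $L^{a_k}_{T_{A_n\setminus\{a_k\}}}=\sigma_A$ explicitly.
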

\begin{proof}
In the same manner as in the proof of Lemma 6.3 of Takeda-Yano \cite{TY}, the following can be obtained:
\begin{align}
\label{Lexp}
  L_{T_{A_n\setminus \{a_k\}}}^{a_k}\ \text{under}\ \P_{a_k}\deq \text{Exp}\left(\bm{n}^{a_k}(T_{A_n\setminus \{a_k\}}<\infty)\right).
\end{align}
Thus, we have
\begin{align*}
&\frac{1}{\lambda_{a_k}}-\sum_{\substack{i;\ i\le n\\ i\neq k}}\P_{a_k}\left[e^{-\lambda_{a_k}L_{T_{a_i}}^{a_k}},\ T_{a_i}=T_{A_n\setminus \{a_k\}}\right]\frac{1}{\lambda_{a_k}}\\
&\qquad = \P_{a_k}\left[\frac{1-e^{-\lambda_{a_k}L_{T_{A_n\setminus \{a_k\}}}^{a_k}}}{\lambda_{a_k}}\right]\\
&\qquad =\int_0^\infty \left(\frac{1-e^{-\lambda_{a_k}x}}{\lambda_{a_k}}\right)\bm{n}^{a_k}(T_{A_n\setminus \{a_k\}}<\infty)e^{-\bm{n}^{a_k}(T_{A_n\setminus \{a_k\}}<\infty)x}dx\\
&\qquad =\frac{1}{\bm{n}^{a_k}(T_{A_n\setminus \{a_k\}}<\infty)+\lambda_{a_k}}.
    \stepcounter{equation}\tag{\theequation}
\end{align*}
The proof is complete.
\end{proof}

By this lemma, it suffices to show the following:\\
\begin{lem}
It holds that
\begin{align}
\label{I}
  \P_{a_k}\left[e^{-\lambda_{a_k}L_{T_{a_i}}^{a_k}},\ T_{a_i}=T_{A_n\setminus \{a_k\}}\right]=\frac{\bm{n}^{a_k}(T_{a_i}=T_{A_n\setminus \{a_k\}}<\infty )}{\bm{n}^{a_k}(T_{A_n\setminus \{a_k\}}<\infty)+\lambda_{a_k}}
\end{align}
for $i\le n$ and $i\neq k$.
\end{lem}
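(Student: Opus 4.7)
The plan is a direct excursion-theoretic computation away from $a_k$. Let $A:=\{T_{A_n\setminus \{a_k\}}<\infty\}$ and $B_i:=\{T_{a_i}=T_{A_n\setminus \{a_k\}}<\infty\}$, viewed as measurable subsets of $\mathscr{D}^{a_k}$, so that $A=\bigsqcup_{j\neq k} B_j$, and set $\sigma_A:=\inf\{s:\epsilon_s^{a_k}\in A\}$. The key identification is that, $\P_{a_k}$-a.s., the event $\{T_{a_i}=T_{A_n\setminus \{a_k\}}\}$ coincides with $\{\epsilon^{a_k}_{\sigma_A}\in B_i\}$, and on this event $T_{a_i}$ falls inside the excursion interval $[\eta^{a_k}_{\sigma_A -},\eta^{a_k}_{\sigma_A}]$. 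Since the continuous local time $(L_t^{a_k})_{t\geq 0}$ is constant on every such interval, we obtain
\begin{align*}
  L^{a_k}_{T_{a_i}} = L^{a_k}_{T_{A_n\setminus\{a_k\}}} = \sigma_A \qquad \text{on } B_i.
\end{align*}

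Next, I would invoke the standard thinning property of the Poisson point process $(\epsilon^{a_k}_s)_{s\geq 0}$ (already used in the proof of Lemma \ref{exp-lem}): $\sigma_A$ is exponentially distributed with parameter $\bm{n}^{a_k}(A)$, the mark $\epsilon^{a_k}_{\sigma_A}$ has law $\bm{n}^{a_k}(\,\cdot\, \mid A)$, and the two are independent. Consequently the left-hand side of (\ref{I}) factorizes as
\begin{align*}
  \P_{a_k}\left[e^{-\lambda_{a_k}L^{a_k}_{T_{a_i}}},\ T_{a_i}=T_{A_n\setminus\{a_k\}}\right]
  = \P_{a_k}\left[e^{-\lambda_{a_k}\sigma_A}\right]\cdot \P_{a_k}\bigl(\epsilon^{a_k}_{\sigma_A}\in B_i\bigr)
  = \frac{\bm{n}^{a_k}(A)}{\bm{n}^{a_k}(A)+\lambda_{a_k}}\cdot \frac{\bm{n}^{a_k}(B_i)}{\bm{n}^{a_k}(A)},
\end{align*}
which telescopes to the right-hand side. (If $\bm{n}^{a_k}(A)=0$ both sides are $0$, so we may assume $\bm{n}^{a_k}(A)>0$.)

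The main---essentially only---subtle step is the first identification: one must justify $L^{a_k}_{T_{a_i}}=\sigma_A$ on $B_i$ using that local time does not accrue during excursions away from $a_k$, and translate the stopping-time description of $B_i$ into a condition purely on the mark $\epsilon^{a_k}_{\sigma_A}$. Once this is in place everything else is routine, and is in fact a simplified version of the Poisson point process manipulation already performed in the derivation of (\ref{II-2}). As an alternative presentation, one can bypass $\sigma_A$ by combining (\ref{Lexp})---which gives $L^{a_k}_{T_{A_n\setminus\{a_k\}}}\deq \mathrm{Exp}(\bm{n}^{a_k}(A))$ under $\P_{a_k}$---with the independence of $L^{a_k}_{T_{A_n\setminus\{a_k\}}}$ and $\mathbf{1}_{\{T_{a_i}=T_{A_n\setminus\{a_k\}}\}}$, leading to the same factorization.
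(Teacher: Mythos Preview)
Your proposal is correct and follows essentially the same route as the paper. The paper first treats the case $\lambda_{a_k}=0$ to obtain $\P_{a_k}(T_{a_i}=T_{A_n\setminus\{a_k\}})=\bm{n}^{a_k}(B_i)/\bm{n}^{a_k}(A)$ via the law of the mark $\epsilon^{a_k}_{\sigma_A}$, and then shows by a direct conditional computation that $L^{a_k}_{T_{a_i}}$ under $\P_{a_k}(\,\cdot\mid T_{a_i}=T_{A_n\setminus\{a_k\}})$ is $\mathrm{Exp}(\bm{n}^{a_k}(A))$; this is exactly your ``alternative presentation,'' while your main version packages the same two ingredients as the single Poisson point process fact that $\sigma_A$ and $\epsilon^{a_k}_{\sigma_A}$ are independent.
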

\begin{proof}
First, we consider the case of $\lambda_{a_k}=0$. Let $A:=\{T_{A_n\setminus \{a_k\}}<\infty\}.$ Since $\epsilon_{\sigma_A}^{a_k}$ has a distribution $\bm{n}^{a_k}(\cdot |A)$, we have
  \begin{align*}
  \label{ak0}
 \P_{a_k}\left(T_{a_i}=T_{A_n\setminus \{a_k\}}\right)&=\P_{a_k}\left(\epsilon_{\sigma_A}^{a_k}\in \{T_{a_i}=T_{A_n\setminus \{a_k\}}<\infty\}\right)\\
 &=\frac{\bm{n}^{a_k}(T_{a_i}=T_{A_n\setminus \{a_k\}}<\infty)}{\bm{n}^{a_k}(T_{A_n\setminus \{a_k\}}<\infty)}.
     \stepcounter{equation}\tag{\theequation}
  \end{align*}

Next, we consider the general case. Since
\begin{align*}
\P_{a_k}(L_{T_{a_i}}^{a_k}>l|\ T_{a_i}=T_{A_n\setminus \{a_k\}})&=\frac{\P_{a_k}(\eta_{l}^{a_k}<T_{a_k},\ T_{a_i}=T_{A_n\setminus \{a_k\}})}{\P_{a_k}(T_{a_i}=T_{A_n\setminus \{a_k\}})}\\
  &=\frac{\P_{a_k}(\eta_l^{a_k}<T_{A_n\setminus \{a_k\}})\P_{a_k}(T_{a_i}=T_{A_n\setminus \{a_k\}})}{\P_{a_k}(T_{a_i}=T_{A_n\setminus \{a_k\}})}\\
  &=\P_{a_k}\left(\eta_l^{a_k}<T_{A_n\setminus \{a_k\}}\right)\\
  &=\P_{a_k}\left(L_{T_{A_n\setminus \{a_k\}}}^{a_k}>l\right),
   \stepcounter{equation}\tag{\theequation}
\end{align*}
we have by (\ref{Lexp}),
\begin{align}
   L_{T_{a_i}}^{a_k}\ \text{under}\ \P_{a_k}(\cdot|\ T_{a_i}=T_{A_n\setminus \{a_k\}})\deq \text{Exp}\left(\bm{n}^{a_k}(T_{A_n\setminus \{a_k\}}<\infty)\right).
\end{align}
Thus, by (\ref{nMP}) and (\ref{ak0}), we have
\begin{align*}
&\P_{a_k}\left[e^{-\lambda_{a_k}L_{T_{a_i}}^{a_k}},\ T_{a_i}=T_{A_n\setminus \{a_k\}}\right]\\
  &\qquad =\P_{a_k}(T_{a_i}=T_{A_n\setminus \{a_k\}})\P_{a_k}\left[e^{-\lambda_{a_k}L_{T_{a_i}}^{a_k}}\Big|\ T_{a_i}=T_{A_n\setminus \{a_k\}}\right]\\
  &\qquad =\P_{a_k}(T_{a_i}=T_{A_n\setminus \{a_k\}})\int_0^\infty e^{-\lambda_{a_k}x}\bm{n}^{a_k}(T_{A_n\setminus \{a_k\}}<\infty)e^{-\bm{n}^{a_k}(T_{A_n\setminus \{a_k\}}<\infty)x}dx\\
  &\qquad =\frac{\P_{a_k}(T_{a_i}=T_{A_n\setminus \{a_k\}})\bm{n}^{a_k}(T_{A_n\setminus \{a_k\}}<\infty)}{\bm{n}^{a_k}(T_{A_n\setminus \{a_k\}}<\infty)+\lambda_{a_k}}\\
  &\qquad =\frac{\bm{n}^{a_k}(T_{a_i}=T_{A_n\setminus \{a_k\}}<\infty)}{\bm{n}^{a_k}(T_{A_n\setminus \{a_k\}}<\infty)+\lambda_{a_k}}.
   \stepcounter{equation}\tag{\theequation}
\end{align*}
The proof is complete.
\end{proof}

Therefore, we obtain the identity (\ref{g0}).


\section*{Acknowledgement}
The author would like to thank Professor Kouji Yano, for his careful guidance and great support. This work was supported by JST SPRING, Grant Number JPMJSP2138. 

\bibliographystyle{plain}

\end{document}